\newtheorem{proposition}{Proposition}
\newtheorem{example}[proposition]{Example}
\newtheorem{lemma}[proposition]{Lemma}
\newtheorem{corollary}[proposition]{Corollary}
\newtheorem{theorem}[proposition]{Theorem}
\def\prf{\noindent{\it Proof: }}
\def\qed{\hfill$\square$}
\begin{document}

\begin{center}

{
\bf Reflected maxmin copulas and modeling quadrant subindependence }\\[2mm]

Toma\v{z} Ko\v{s}ir, \\
Faculty of Mathematics and Physics, University of Ljubljana, Slovenia\\
Department of Mathematics, Institute of Mathematics, Physics, and Mechanics, Ljubljana, Slovenia\footnote{The authors acknowledge the financial support from the Slovenian Research Agency (research core funding No.\ P1-0222, and research funding No.\ L1-6722). \label{opomba}} \\
E-mail address:$<$tomaz.kosir@fmf.uni-lj.si$>$\\[2mm]

Matja\v{z} Omladi\v{c}, \\
Department of Mathematics, Institute of Mathematics, Physics, and Mechanics, Ljubljana, Slovenia
\textsuperscript{\ref{opomba}}\\[1mm]
E-mail address:$<$matjaz@omladic.net$>$\\[8mm]

{\bf  Abstract}

\end{center}

{\small \noindent

\noindent Copula models have become popular in different applications, including modeling shocks, in view of their ability to describe better the dependence concepts in stochastic systems. The class of maxmin copulas was recently introduced by Omladi\v{c} and Ru\v{z}i\'{c} \cite{OmRu}. It extends the well known classes of Marshall-Olkin and Marshall copulas by allowing the external shocks to have different effects on the two components of the system. By a reflection (flip) in one of the variables we introduce a new class of bivariate copulas called reflected maxmin (RMM) copulas. We explore their properties and show that symmetric RMM copulas relate to general RMM copulas similarly as semilinear copulas relate to Marshall copulas. We transfer that relation also to maxmin copulas. We also characterize possible diagonal functions of symmetric RMM copulas.
\\


\noindent{\bf  Key words: } Copula, Dependence concepts, Marshall-Olkin copulas, Shock models, Maxmin copulas, Semilinear copulas.\\

\noindent{\bf 2010 MSC: } 60E05, 62H05 }\\[1mm]

\section{ Introduction }

Dependence concepts play a crucial role in multivariate statistical literature since it was recognized that the independence assumption cannot conveniently describe the behavior of a stochastic system. Since then, different attempts have been made in order to provide more flexible methods to describe the variety of dependence-types that may occur in practice. Copula models have become popular in different applications in view of their ability to describe the relationships among random variables in a flexible way. To this end, several families of copulas have been introduced, motivated by special needs from the scientific practice (cf.\ \cite{DuSe,Joe,Nels}).

Consider, for instance, the case when one wants to build a stochastic model for describing the dependence among
two (or more) lifetimes, i.e. positive random variables. In engineering applications, joint models of lifetimes may serve to estimate the expected lifetime of a system composed by several components. In a related situation like portfolio credit risk, instead, the lifetimes may have the interpretation of time-to-default of firms, or generally financial entities, while a stochastic model may estimate the price/risk of a related derivative contract (e.g.\ CDO). In both cases, it is of interest to estimate the probability of the occurrence of a joint default. The generation of convenient statistical distributions for modeling such situations originated from the seminal paper by Marshall and Olkin \cite{MaOl}, for an up-to-date review see \cite{ChDuMu}. They consider the case of a 2-component system (which may be easily extended to more components) whose behavior is described by the continuous random variables (=r.v.'s) $X_1$ and $X_2$ distributed according to a continuous distribution function $F_i , X_i \sim F_i$ for $i=1,2$. Furthermore for each $i=1,2$ consider the random variable $Z_i$ with probability distribution function $G_i$, that can be interpreted as a shock that effects only the $i$-th component of the system, i.e., the idiosyncratic shock. In addition, consider an r.v.\ $Z_3$ with probability distribution function $G_3$ that can be interpreted as an (exogenous) shock that affects the stochastic behavior of both system components. The life-times of components in this model (which is basically the Marshall model, somewhat more general than the Marshall-Olkin model \cite{MaOl,Mars} -- see also \cite{DuGiMa} for the description of a general framework) are linked with the shocks via $X_1=\min\{Z_1,Z_3\}$ and $X_2=\min\{Z_2,Z_3\}$.

In a recent paper Omladi\v{c} and Ru\v{z}i\'{c} \cite{OmRu} extend results of Marshall and Olkin, cf.\ also \cite{DuOmOrRu}, by introducing a new class of copulas. A simple model where these copulas occur naturally is a two-component system similar to the above in which we assume that the first of the two components has a recovery option. This assumption affects the linkages substantially. As a matter of fact, we get $X_1=\max\{Z_1,Z_3\}$ and $X_2=\min\{Z_2,Z_3\}$. The authors of \cite{OmRu} develop the resulting copula of this model again in a closed form and call it the maxmin copula. Various interpretations of this model can be found in \cite{DuOmOrRu}, since it is possible in many practical situations that the common exogenous shock will produce different effects on different system components. For instance, we may think of $X_1$ and $X_2$ as r.v.'s representing the respective wealth of two groups of people, and the exogenous shock $Z_3$ is interpreted as an event that is beneficial to the first group and detrimental to the second one. Analogously, $X_1$ and $X_2$ can be thought as a short and a long investment, respectively, while $Z_3$ is beneficial only to one of the types of investment.

Maxmin copulas have some properties that are appealing in various contexts related to fuzzy set theory and multicriteria decision making. It includes nonsymmetric copulas that are used for instance as more general fuzzy connectives \cite{BeBoCiSaPlSa,DuKoMeSe}. Its associated measure may have a singular part, a fact of potential use in various copula-based integrals (see \cite{KlLiMePa,KlMeSpSt}). As we shall show in what follows the main idea for maxmin copulas is reflected to probabilistic extensions of semilinear copulas, so it might be worth while to study possible analogues to their extensions to various classes of constructions (cf.\ \cite{JwBaDeMe14,JwBaDeMe15}).

In a seminal paper Durante et al.\ \cite{DuKoMeSe} introduce semilinear copulas and give a surprising relation between  these copulas and Marshall's copulas. The idea of these copulas is roughly that, given a diagonal section $\delta$, they extend linearly across the two triangles. More precisely,
\[
    S_\delta(x,y)=\left\{
             \begin{array}{ll}
               \displaystyle y\frac{\delta(x)}{x}, & \hbox{$y\leqslant x$;} \\
               \displaystyle x\frac{\delta(y)}{y}, & \hbox{otherwise,}
             \end{array}
           \right.
\]
where convention $\frac{0}{0};=0$ is adopted, is called (lower) semilinear copula. The authors of \cite{DuKoMeSe} observe that every symmetric Marshall copula is actually semilinear and moreover \cite[Proposition 8]{DuKoMeSe}, for every Marshall copula $C(x,y)$ there exist semilinear copulas $S_1$ and $S_2$ such that, for all $(x,y)\in[0,1]^2$,
\[
    C(x,y)=\min\left\{\frac{S_1(xy,y)}{y},\frac{S_2(x,xy)}{x}\right\}.
\]

In Section \ref{sec:maxmin} we shortly review the maxmin copulas, introduce the reflected maxmin (RMM) copulas, and give some properties of this class of copulas. Section \ref{sec:sing} is devoted primarily to the singularity and absolute continuity of RMM copulas. In Section \ref{sec:stat} we give a statistical interpretation of this class of copulas, and in Section \ref{sec:diag} we study the properties of the diagonals of symmetric RMM copulas.

\section{ Maxmin copulas revisited }\label{sec:maxmin}

A maxmin copula has two generating functions $\phi,\psi:[0,1]\rightarrow\mathds{R}$ that satisfy the following properties:
\begin{description}
  \item[(F1)] $\phi(0)=\psi(0)=0$, and $\phi(1)=\psi(1)=1$,
  \item[(F2)] $\phi,\psi$ are nondecreasing,
  \item[(F3)] the associated functions $\phi^*:(0,1]\rightarrow[1,\infty)$ and $\psi_*:(0,1)\rightarrow[1,\infty]$ defined by
      \[
        \phi^*(u)=\frac{\phi(u)}{u}\ \ \mbox{and}\ \ \psi_*(v)=\frac{1-\psi(v)}{v-\psi(v)}
      \]
      are nonincreasing. Here, $\psi_*(v)=\infty$ if $\psi(v)=v$.
\end{description}
Given $\phi$ and $\psi$ with the above properties the \emph{maxmin copula} $C(u,v)=C_{\phi,\psi}(u,v)$ is then defined by (cf.\ \cite{OmRu})
\begin{eqnarray}
\nonumber 
  C(u,v) &=& \min\{u,\phi(u)v-\phi(u)\psi(v)+u\psi(v)\} \\
  &=& \min\{u,u+\phi(u)(v-\psi(v))-u(1-\psi(v))\} \\ \nonumber
  &=& u+u(v-\psi(v))\min\{0,\phi^*(u)-\psi_*(v)\},
\end{eqnarray}
where the latter equality holds if $u\neq0$ and $\psi(v)\neq v$.

In order to make the role of the generating functions $\phi$ and $\psi$ more symmetric we apply the following transformation to the class of maxmin copulas. If $C_{\phi,\psi}(u,v)$ is a maxmin copula, then we introduce the copula
\[
    {C}^\sigma_{\phi,\psi}(u,v)=u-C_{\phi,\psi}(u,1-v).
\]
This is a copula of a random vector $(\alpha(U),\beta(V))$ where $\alpha$ is strictly increasing on the range of $U$ and $\beta$ is strictly decreasing on the range of $V$ (cf.\ \cite[Theorem 2.4.4]{Nels}). A general approach to symmetries of copulas is given in \cite[Section 1.7.3]{DuSe}. A simple calculation then gives that
\[
    C^\sigma_{\phi,\psi}(u,v)=\max\{0,uv-(\phi(u)-u)(\widehat{\psi}(v)-v)\},
\]
where $\widehat{\psi}(v)=1-\psi(1-v)$. Here and in the sequel we are using the notation ``$\sigma$'' for the transformation $C\mapsto{C}^\sigma$ only in the case when $C$ is a maxmin copula and the flip is performed on the second variable. Observe that notation $\sigma_2$ is used in this place in \cite{DuSe} thus pointing out that this is a reflection in the second variable, while reflection in the first variable is denoted by $\sigma_1$. We will point out the fact that copula $C^\sigma$ is obtained from the maxmin copula $C$ via the above reflection by terming it \emph{reflected maxmin copula}, or \emph{RMM copula} for short; we will also use abbreviation \emph{SRMM copulas} for the symmetric reflected maxmin copulas. Note that the same reflection sends reflected maxmin copula back to the maxmin copula.

From Properties \textbf{(F1)--(F3)} of the generating functions $\phi$ and $\psi$ it follows that
\begin{align*}
    \phi(0)=\widehat{\psi}(0)=0,\ \ &\phi(1)=\widehat{\psi}(1)=1,\ \ \mbox{and}\ \ \ \ \ \ \ \ \\
    u\leqslant\phi(u),\ \ \ \  &v\leqslant\widehat{\psi}(v) \ \ \ \ \mbox{for all}\ \ \ u,v\in[0,1].
\end{align*}
Also if $\phi(u)=u$ for some $u>0$, then $\phi(u')=u'$ for all $u<u'\leqslant1$. Similarly, if
$\widehat{\psi}(v)=v$ for some $v>0$ then $\widehat{\psi}(v')=v'$ for all $v<v'\leqslant1$. We write
\[
    f(u)=\phi(u)-u,\ \ g(v)=\widehat{\psi}(v)-v=1-v-\psi(1-v),
\]
and
\[
    f^*(u)=\frac{f(u)}{u},\ \ g^*(v)=\frac{g(v)}{v},
\]
for $u,v>0$. In addition we define
\begin{equation}\label{eq:new}
  f^*(0)=\left\{
             \begin{array}{ll}
               \lim_{u\downarrow0}\frac{f(u)}{u}, & \hbox{if it exists;} \\
               \infty, & \hbox{otherwise.}
             \end{array}
           \right.
\end{equation}
and similarly for $g^*(0)$. Hence $f,g:[0,1]\rightarrow[0,1]$ and $f^*,g^*:[0,1]\rightarrow[0,\infty]$.

\begin{lemma}\label{properties G}
Suppose $f,f^*,g$, and $g^*$ correspond to maxmin copula $C_{\phi,\psi}$ as defined above. Then the following holds:
\begin{description}
  \item[(G1)] $f(0)=g(0)=0$, $f(1)=g(1)=0$, $f^*(1)=g^*(1)=0$,
  \item[(G2)] the functions $\widehat{f}(u)=f(u)+u$ and $\widehat{g}(u)=g(u)+u$ are nondecreasing on $[0,1]$.
  \item[(G3)] the functions $f^*$ and $g^*$ are nonincreasing on $(0,1]$.
\end{description}
\end{lemma}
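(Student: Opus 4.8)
The plan is to read off each of \textbf{(G1)}--\textbf{(G3)} directly from the defining properties \textbf{(F1)}--\textbf{(F3)} of $\phi$ and $\psi$, together with the elementary facts about $\widehat\psi$ recorded just before the statement. For \textbf{(G1)} I would simply substitute $u=0,1$ into $f(u)=\phi(u)-u$ and into $g(v)=\widehat\psi(v)-v=1-v-\psi(1-v)$, using $\phi(0)=\widehat\psi(0)=0$ and $\phi(1)=\widehat\psi(1)=1$ from \textbf{(F1)}; the two values $f^*(1)=g^*(1)=0$ then follow because $f^*(1)=f(1)/1$ and $g^*(1)=g(1)/1$.

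For \textbf{(G2)} the point is that the auxiliary functions are nothing but $\phi$ and $\widehat\psi$: indeed $\widehat f(u)=f(u)+u=\phi(u)$, which is nondecreasing by \textbf{(F2)}, while $\widehat g(u)=g(u)+u=\widehat\psi(u)=1-\psi(1-u)$ is nondecreasing because $\psi$ is nondecreasing by \textbf{(F2)} and $u\mapsto 1-u$ reverses order. Likewise, half of \textbf{(G3)} is immediate: writing $f^*(u)=\frac{\phi(u)-u}{u}=\phi^*(u)-1$ on $(0,1]$ shows $f^*$ is nonincreasing since $\phi^*$ is nonincreasing by \textbf{(F3)}.

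The only part that requires genuine work is the monotonicity of $g^*$, and this is where I expect the main obstacle to lie, because the reflection $\widehat\psi(v)=1-\psi(1-v)$ hides the relation to the ``$\psi_*$'' condition of \textbf{(F3)}. My plan is to make that relation explicit by the substitution $w=1-v$: for $v\in(0,1)$ one computes $g(v)=w-\psi(w)$ and hence $g^*(v)=\frac{w-\psi(w)}{1-w}$. The key algebraic identity to exploit is
\[
    \psi_*(w)-1=\frac{1-\psi(w)}{w-\psi(w)}-1=\frac{1-w}{w-\psi(w)},
\]
which gives $g^*(v)=\frac{1}{\psi_*(1-v)-1}$, valid for $v\in(0,1)$ with the convention $1/\infty=0$ covering the degenerate case $\psi(w)=w$ (where $g^*(v)=0$); note also that $\psi_*(w)=1$ forces $w=1$, so $\psi_*(1-v)-1>0$ throughout $(0,1)$ and the reciprocal is well defined.

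It then remains to chase monotonicity through this identity. Since $\psi_*$ is nonincreasing on $(0,1)$ by \textbf{(F3)} and $v\mapsto 1-v$ reverses order, the composite $v\mapsto\psi_*(1-v)-1$ is nondecreasing and nonnegative, so its reciprocal $g^*$ is nonincreasing on $(0,1)$; the extension to $v=1$ is free because $g^*\geqslant0$ everywhere (as $v\leqslant\widehat\psi(v)$) while $g^*(1)=0$ by \textbf{(G1)}. I would present the substitution, the displayed identity, and the order-reversal argument as the three decisive steps, leaving the verification of \textbf{(G1)} and the $f$-statements as routine.
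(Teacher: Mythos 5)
Your proof is correct, and for \textbf{(G1)}, for $\widehat f=\phi$ in \textbf{(G2)}, and for $f^*=\phi^*-1$ in \textbf{(G3)} it matches the paper's argument. The genuine difference lies in the one step you correctly single out as the crux, the monotonicity of $g^*$. The paper settles it by invoking an external result: by \cite[Lemma 3]{OmRu} the function $\widetilde{\psi}(v)=\frac{1-\psi(v)}{1-v}$ is nondecreasing on $(0,1]$, and $g^*(v)=\widetilde{\psi}(1-v)-1$ is therefore nonincreasing. You instead stay entirely inside the stated axioms \textbf{(F1)}--\textbf{(F3)}: you establish the identity $g^*(v)=\bigl(\psi_*(1-v)-1\bigr)^{-1}$ on $(0,1)$ (with the convention $1/\infty=0$ covering the degenerate case $\psi(1-v)=1-v$), check that $\psi_*-1$ is strictly positive there, and then read off the monotonicity of $g^*$ from the assumed monotonicity of $\psi_*$, extending to the endpoint via $g^*\geqslant 0=g^*(1)$. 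It is worth noting that your identity is exactly the one the paper itself uses later, in the opposite direction, to prove the converse Lemma \ref{properties F}, where it appears as $\psi_*(v)-1=1/g^*(1-v)$; your proof in effect makes the two lemmas mirror images of each other. As for what each approach buys: the paper's proof is shorter but outsources the key monotonicity to \cite[Lemma 3]{OmRu} (and similarly attributes \textbf{(G2)} to properties \textbf{(Fc)} and \textbf{(Fd)} of \cite{OmRu}, which you derive directly from \textbf{(F2)} and order reversal), whereas yours is self-contained and makes the reciprocal duality between $g^*$ and $\psi_*$ explicit, at the modest cost of the positivity, degenerate-case, and endpoint bookkeeping you carry out.
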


 \prf
In this proof we use the properties of generators $\phi$ and $\psi$ of $C_{\phi,\psi}$ found in \cite{OmRu}. Recall that $f(u)=\phi(u)-u$ and $g(v)=1-v-\psi(1-v)$ to get property \textbf{(G1)} right away. Property \textbf{(G2)} follows by \textbf{(Fc)} and \textbf{(Fd)} of \cite{OmRu}. \textbf{(G3)}: Recall that $\phi^*(u)$ is nonincreasing, so that $f^*(u)= \phi^*(u)-1$ is also nonincreasing. Finally, the function
\[
    \widetilde{\psi}(v)=\frac{1-\psi(v)}{1-v}
\]
is nondecreasing on $(0,1]$ by \cite[Lemma 3]{OmRu}. So, $g^*(v)=\widetilde{\psi}(1-v)-1$ is nonincreasing on $(0,1]$.
 \qed\\

As noted in \cite[p.\ 117]{OmRu} the functions  $\phi$ and $\psi$ are continuous on $(0,1)$, the function $\phi$ is not necessarily continuous at 0, and the function $\psi$ is not necessarily continuous at 1.

\textbf{Comment.}  Observe that Condition \textbf{(G3)} allows for a very simple geometric interpretation: The angle between the vectors $(u, f(u))$ and $(u, 0)$ is nonincreasing. \\

\textbf{Definition.} Given the functions $f,g:[0,1]\rightarrow[0,\infty)$ which satisfy Properties \textbf{(G1)--(G3)} of Lemma \ref{properties G}, we define a reflected maxmin copula $C_{f,g}$ by the following rule
\begin{equation}\label{inverse maxmin}
    C_{f,g}(u,v)=\max\{0,uv-f(u)g(v)\}.
\end{equation}
Note that we can rewrite $C_{f,g}$ as follows
\begin{equation}\label{inverse maxmin star}
    C_{f,g}(u,v)=uv\max\{0,1-f^*(u)g^*(v)\}.
\end{equation}

It follows by Lemma \ref{properties G} that we can associate to every maxmin copula a corresponding RMM copula. The next lemma shows the converse of this fact.

\begin{lemma}\label{properties F}
Suppose the functions $f,f^*,g$, and $g^*$ satisfy properties \mbox{\rm \textbf{(G1)--(G3)}} of Lemma \ref{properties G}. Then the functions
\[
    \phi(u)=u+f(u),\ \ \mbox{and}\ \  \psi(v)=v-g(1-v),
\]
satisfy properties \mbox{\rm\textbf{(F1)--(F3)}} and $C_{\phi,\psi}$ is a maxmin copula.
\end{lemma}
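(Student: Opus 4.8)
The plan is to verify the three defining properties \textbf{(F1)}--\textbf{(F3)} for the functions $\phi(u)=u+f(u)$ and $\psi(v)=v-g(1-v)$; once these hold, generators satisfying \textbf{(F1)}--\textbf{(F3)} produce a genuine maxmin copula through the displayed formula by the construction of \cite{OmRu}, which is precisely what the assertion ``$C_{\phi,\psi}$ is a maxmin copula'' means. The guiding observation is that the present lemma is exactly the inverse of Lemma \ref{properties G}: the assignments $\{\phi,\psi\}\mapsto\{f,g\}$ and $\{f,g\}\mapsto\{\phi,\psi\}$ are mutually inverse, so I expect every step here to be the reverse of a corresponding step in the proof of Lemma \ref{properties G}.

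First I would dispatch \textbf{(F1)} by direct substitution using \textbf{(G1)}: $\phi(0)=f(0)=0$, $\phi(1)=1+f(1)=1$, $\psi(0)=-g(1)=0$, and $\psi(1)=1-g(0)=1$. For \textbf{(F2)} I would note that $\phi$ is literally the function $\widehat{f}$ of \textbf{(G2)}, hence nondecreasing, while the substitution $w=1-v$ rewrites $\psi(v)=v-g(1-v)=1-(w+g(w))=1-\widehat{g}(w)$; since $\widehat{g}$ is nondecreasing and $w$ decreases as $v$ increases, $\psi$ is nondecreasing.

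The substantive step is \textbf{(F3)}. For $\phi^*$ this is immediate: for $u>0$ one has $\phi^*(u)=\phi(u)/u=1+f^*(u)$, which is nonincreasing by \textbf{(G3)} and takes values in $[1,\infty)$ because $f^*\geqslant 0$. The main obstacle is the analysis of $\psi_*$, where I would compute numerator and denominator separately: with $w=1-v$ one gets $1-\psi(v)=w+g(w)$ and $v-\psi(v)=g(w)$, so that
\[
    \psi_*(v)=\frac{w+g(w)}{g(w)}=1+\frac{1}{g^*(w)}=1+\frac{1}{g^*(1-v)}.
\]
Here I must track the convention carefully: when $g(w)=0$ (equivalently $\psi(v)=v$) the formula yields $\psi_*(v)=\infty$, matching the definition in \textbf{(F3)}, while $g^*\geqslant 0$ forces $\psi_*\geqslant 1$ throughout. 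For monotonicity, as $v$ increases $w=1-v$ decreases, so by \textbf{(G3)} the quantity $g^*(1-v)$ is nondecreasing in $v$; hence $1/g^*(1-v)$ is nonincreasing (with the value $\infty$ absorbed consistently as $g^*\downarrow 0$), and therefore $\psi_*$ is nonincreasing on $(0,1)$.

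With \textbf{(F1)}--\textbf{(F3)} established, I would close by invoking the construction of \cite{OmRu}, concluding that $C_{\phi,\psi}$ is a maxmin copula. The only genuinely delicate point is the $\psi_*$ identity together with the boundary and convention bookkeeping at the values where $g$ vanishes; everything else is a direct transcription, in reverse, of the algebra already carried out in Lemma \ref{properties G}.
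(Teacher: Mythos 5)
Your proof is correct and takes essentially the same route as the paper's own: \textbf{(F1)} by direct substitution, \textbf{(F2)} via the monotonicity of $\widehat{f}$ and $\widehat{g}$, and \textbf{(F3)} via the identities $\phi^*=1+f^*$ and $\psi_*(v)-1=1/g^*(1-v)$ combined with \textbf{(G3)}. Your careful bookkeeping where $g$ vanishes even spells out the special case that the paper only flags in a parenthetical remark.
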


\begin{proof}
Property \textbf{(F1)} is straightforward. Now, since $\phi(u)= \widehat{f}(u)$ for all $u\in[0,1]$, property \textbf{(G2)} implies that $\phi$ is nondecreasing. Similarly, $\widehat{g}$ is nondecreasing by \textbf{(G2)}, so that
\[
    v_1+g(v_1)\leqslant v_2+g(v_2),\quad\mbox{and therefore}\quad 1-v_2+g(1-v_2)\leqslant 1-v_1+g(1-v_1),
\]
for any $v_1,v_2$ such that $0\leqslant v_1\leqslant v_2\leqslant1$. So, $\psi(v_2)-\psi(v_1)=1-v_1+g(1-v_1)-(1-v_2+g(1-v_2))\geqslant0$. Hence, $\psi$ is nondecreasing and \textbf{(F2)} is proved as well.

To show \textbf{(F3)} recall first that $\phi^*(u)=f^*(u)+1$ for all $u\in(0,1]$ so that function $\phi^*$ is nonincreasing since function $f^*$ is nonincreasing by \textbf{(G3)}. Next, consider
\begin{equation*}
  \psi_*(v)-1=\frac{1-\psi(v)-v+\psi(v)}{v-\psi(v)}=\frac{1}{g^*(1-v)},
\end{equation*}
and recall that the function $g^*(v)$ is nonincreasing by property \textbf{(G3)} implying that $g^*(1-v)$ is nondecreasing and therefore $\displaystyle \frac{1}{g^*(1-v)}$ is nonincreasing. So, $\psi_*$ is nonincreasing on $(0,1)$ as desired and \textbf{(F3)} holds. (Observe that here, one may want to consider the possibility of existence of a $u<1$ such that $g(u)=0$ as a special case.)
\end{proof}

Using this fact we show in the next theorem immediately that $C_{f,g}$ defined by \eqref{inverse maxmin} or equivalently \eqref{inverse maxmin star} is a copula. But, before we do that, let us pause for a few comments. First, the copulas defined by these two formulas remind us of a class of copulas introduced in \cite{RoLaUbFl}. Copulas of this type also appeared in \cite[Proposition 3.2]{DuJa}. As a matter of fact the authors of \cite{RoLaUbFl} study there copulas of the form $C(u,v)=uv+f(u)g(v)$. Since we allow that $uv+f(u)g(v)<0$ for some $u,v\in[0,1]$, our copulas $C_{f,g}$ extend a subclass of those. Moreover, our copulas may be viewed as perturbations of the product copula $\Pi$.  General perturbations of copulas were studied in \cite{DuFeSaUbFl} and \cite{MeKoKo}, where a subclass of what we call reflected maxmin copulas were considered (cf.\ \cite[\S 3]{MeKoKo}). Formula \eqref{inverse maxmin} can also be related to the formula in \cite[Theorem 7.1]{DuMePaSe} with maximum replaced by minimum; we believe that similarity is more than just coincidental. An extension of our copulas in aggregation context may bring us to more general generating functions on one hand and possibly to more general class of bivariate functions (that are currently copulas), i.e.\ to quasi-copulas.

\begin{theorem}\label{copula}
Let $f$ and $g$ satisfy {\rm \textbf{(G1)--(G3)}}, then $C_{f,g}(u,v)$ is a copula. For $\phi(u)=u+f(u)$ and $\psi(v)=v-g(1-v)$ we have
\[    C_{\phi,\psi}(u,v)=u-C_{f,g}(u,1-v) \]
and
\[    C_{f,g}(u,v)=u-C_{\phi,\psi}(u,1-v). \]
\end{theorem}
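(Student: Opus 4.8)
The plan is to establish the copula property of $C_{f,g}$ directly and then obtain the two identities almost for free from the relation between the generators. Since Lemma \ref{properties F} already guarantees that $\phi(u)=u+f(u)$ and $\psi(v)=v-g(1-v)$ satisfy \textbf{(F1)--(F3)}, the function $C_{\phi,\psi}$ is a genuine maxmin copula by the construction of \cite{OmRu}. I would therefore avoid verifying the copula axioms for $C_{f,g}$ from scratch, and instead exhibit $C_{f,g}$ as the image of the known copula $C_{\phi,\psi}$ under the reflection $\sigma$ in the second variable. Concretely, the first step is to compute $u-C_{\phi,\psi}(u,1-v)$ using the closed form
\[
    C_{\phi,\psi}(u,v)=\min\{u,\phi(u)v-\phi(u)\psi(v)+u\psi(v)\},
\]
substituting $1-v$ for $v$ and simplifying with $\phi(u)=u+f(u)$ and $\psi(1-v)=1-v-g(v)$.

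The key algebraic step is that after substitution the expression $\phi(u)\psi(1-v)-\dots$ collapses, via the cancellation $u-\min\{a,b\}=\max\{0,u-b\}$, to $\max\{0,uv-f(u)g(v)\}$, which is exactly \eqref{inverse maxmin}. This simultaneously proves the first displayed identity $C_{\phi,\psi}(u,v)=u-C_{f,g}(u,1-v)$ (read in the reverse direction) and the second identity $C_{f,g}(u,v)=u-C_{\phi,\psi}(u,1-v)$. Once this equality of functions is in hand, the fact that $C_{f,g}$ is a copula is immediate: the reflection $C\mapsto u-C(u,1-v)$ sends copulas to copulas (it is the standard $\sigma_2$ flip recorded in \cite[Theorem 2.4.4]{Nels} and discussed in the excerpt), and $C_{\phi,\psi}$ is a copula; hence its flip $C_{f,g}$ is too. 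This is the cleanest route because it reuses the already-proven maxmin copula structure rather than re-checking grounding, $2$-increasingness, and the boundary conditions by hand.

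The main obstacle I anticipate is the careful bookkeeping in the substitution, precisely at the boundary cases where the generating functions are not continuous. The excerpt notes that $\phi$ may fail to be continuous at $0$ and $\psi$ at $1$ (equivalently $g$ may be discontinuous at $0$), and the two equivalent forms \eqref{inverse maxmin} and \eqref{inverse maxmin star} require the conventions $f^*(0),g^*(0)$ from \eqref{eq:new} and $\tfrac{0}{0}=0$ to match up. I would handle this by working with the additive form \eqref{inverse maxmin}, which avoids division, and treating the degenerate cases $u=0$, $v=0$, and the possibility (flagged in the proof of Lemma \ref{properties F}) that $g(u)=0$ for some $u<1$ separately, checking that both sides of the identity reduce correctly there.

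As a self-contained alternative, in case one prefers not to invoke the reflection principle as a black box, I would verify the two copula axioms for $C_{f,g}$ directly: the boundary conditions $C_{f,g}(u,1)=u$ and $C_{f,g}(1,v)=v$ follow instantly from $f(1)=g(1)=0$ in \textbf{(G1)}, while $2$-increasingness would be argued on each of the two regions $\{uv\geqslant f(u)g(v)\}$ and its complement, using the monotonicity of $\widehat f,\widehat g$ from \textbf{(G2)} and of $f^*,g^*$ from \textbf{(G3)} to control the rectangle increments. I expect the reflection argument to be far shorter, so I would present that as the primary proof and relegate the direct verification to a remark.
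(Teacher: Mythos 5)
Your proposal is correct and takes essentially the same route as the paper: the paper's proof likewise invokes Lemma \ref{properties F} to conclude that $C_{\phi,\psi}$ is a maxmin copula and then deduces that $C_{f,g}(u,v)=u-C_{\phi,\psi}(u,1-v)$ is a copula because the flip in the second variable preserves the copula property (citing Nelsen). The explicit substitution you plan to carry out is the ``simple calculation'' the paper already recorded in Section \ref{sec:maxmin} when introducing $C^\sigma_{\phi,\psi}$, so your write-up just makes that step visible inside the proof.
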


\begin{proof}
  By Lemma \ref{properties F} the functions $\phi$ and $\psi$ satisfy conditions \textbf{(F1)}--\textbf{(F3)} so that $C_{\phi,\psi}$ is a maxmin copula by definition. So
\[    C_{f,g}(u,v)=u-C_{\phi,\psi}(u,1-v), \]
is a copula by \cite[Thm.\ 2]{Nels}.
\end{proof}

Observe that the product copula $\Pi(u,v)$ is an RMM copula for $f\equiv0,g\equiv0$. Also, the Fr\'{e}chet-Hoeffding lower bound $W(u,v)=\max\{0,u+v-1\}$ is an RMM copula if we take
\[
    f(t)=g(t)=\left\{
                \begin{array}{ll}
                  1-t, & \hbox{if $0<t\leqslant1$;} \\
                  0, & \hbox{$t=0$.}
                \end{array}
              \right.
\]
Actually, these are the lower and upper bound for the whole class of RMM copulas.

\begin{lemma}\label{lemma:subindependent}
If $C(u,v)$ is an RMM copula then
\[
    W(u,v)\leqslant C(u,v)\leqslant \Pi(u,v)
\]
\end{lemma}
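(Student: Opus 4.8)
The plan is to reduce both inequalities to the elementary pointwise envelope
$0 \le f(u) \le 1-u$ and $0 \le g(v) \le 1-v$, valid for all $u,v \in [0,1]$, and then to manipulate the $\max$-formula \eqref{inverse maxmin}. So the first step is to record this envelope. Nonnegativity is part of the standing hypothesis $f,g:[0,1]\to[0,\infty)$. For the upper parts, I invoke \textbf{(G2)}, which says that $\widehat f(u)=f(u)+u$ and $\widehat g(v)=g(v)+v$ are nondecreasing on $[0,1]$, together with \textbf{(G1)}, which gives $\widehat f(1)=f(1)+1=1$ and $\widehat g(1)=g(1)+1=1$. Monotonicity then forces $f(u)+u=\widehat f(u)\le\widehat f(1)=1$ and likewise $g(v)+v\le 1$, that is, $f(u)\le 1-u$ and $g(v)\le 1-v$.

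For the upper bound $C(u,v)\le\Pi(u,v)$, I use $C(u,v)=\max\{0,\,uv-f(u)g(v)\}$. Since $f(u)g(v)\ge 0$, the second argument obeys $uv-f(u)g(v)\le uv$, while $0\le uv$ because $u,v\in[0,1]$; hence the maximum of the two arguments is at most $uv=\Pi(u,v)$.

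For the lower bound $W(u,v)\le C(u,v)$, I combine the envelope multiplicatively: $f(u)g(v)\le(1-u)(1-v)=uv-(u+v-1)$, which rearranges to $uv-f(u)g(v)\ge u+v-1$. Using the elementary monotonicity of the map $t\mapsto\max\{0,t\}$, namely $\max\{0,A\}\ge\max\{0,B\}$ whenever $A\ge B$, applied with $A=uv-f(u)g(v)$ and $B=u+v-1$, I conclude $C(u,v)=\max\{0,\,uv-f(u)g(v)\}\ge\max\{0,\,u+v-1\}=W(u,v)$.

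I do not anticipate a genuine obstacle here; the only point needing care is the passage from the monotonicity hypotheses \textbf{(G1)}--\textbf{(G2)} to the pointwise envelope $f(u)\le 1-u$ and $g(v)\le 1-v$, after which both inequalities are direct consequences of the definition \eqref{inverse maxmin}. One could alternatively derive the same envelope from the maxmin-side inequalities $u\le\phi(u)$ and $v\le\widehat\psi(v)$ noted in the text (recalling $f=\phi-\mathrm{id}$ and $g=\widehat\psi-\mathrm{id}$), but the route through \textbf{(G1)}--\textbf{(G2)} keeps the argument self-contained.
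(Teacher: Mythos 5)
Your proof is correct and follows the same direct route as the paper, which simply reads both bounds off the definition $C_{f,g}(u,v)=\max\{0,\,uv-f(u)g(v)\}$. In fact your write-up is more careful than the paper's one-line proof: the paper cites only $f,g\geqslant 0$, which suffices for $C\leqslant\Pi$, whereas the lower bound $W\leqslant C$ genuinely requires the envelope $f(u)\leqslant 1-u$ and $g(v)\leqslant 1-v$, which you correctly extract from \textbf{(G1)}--\textbf{(G2)}.
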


 \prf
This is a straightforward consequence of the definition since $C(u,v)=C_{f,g}(u,v)=\max\{0,uv-f(u)g(v)\}$ for some $f(u),g(v)\geqslant0$.
 \qed\\

By the properties of generating functions of maxmin copulas it follows that $f(u)$ and $g(v)$ are continuous on $(0,1]$, but they need not be continuous at $0$.

If we take $f=g$ then the RMM copula is symmetric, so that it is an SRMM copula. We write
\[
    C_f(u,v)=C_{f,f}(u,v)=\max\{0,uv-f(u)f(v)\}=uv\max\{0,1-f^*(u)f^*(v)\}.
\]
Note that by Lemma \ref{lemma:subindependent} every RMM copula is negatively quadrant dependent (cf.\ \cite{Nels}). This fact might encourage us to call the kind of copulas equivalently to be quadrant subindependent.

The diagonal section of an RMM copula is of the form
\[
    \delta(t)=C_{f,g}(t,t)=\max\{0,t^2-f(t)g(t)\}.
\]
Since $f(t)g(t)\geqslant0$, it follows that $0\leqslant \delta(t)\leqslant t^2$ for all $t\in[0,1]$.

\begin{proposition}
If a copula is simultaneously an RMM copula and a semilinear copula, then it is equal to the product copula.
\end{proposition}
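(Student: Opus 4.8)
The plan is to exploit the rigidity of the two representations. First I would record the easy fact that every semilinear copula is symmetric: interchanging $x$ and $y$ swaps the two cases in the definition of $S_\delta$ and leaves the value unchanged. Hence a copula that is simultaneously RMM and semilinear is symmetric and may be written as $C=C_{f,g}$ with diagonal $\delta(t)=\max\{0,t^2-f(t)g(t)\}$. Moreover $C_{f,g}=\Pi$ exactly when $f\equiv 0$ or $g\equiv 0$ (from \eqref{inverse maxmin}, $C_{f,g}=\Pi$ means $f(u)g(v)\leqslant 0$ for all $u,v$, and since $f,g\geqslant 0$ this forces $f(u)g(v)=0$ identically). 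So it suffices to rule out the case $f\not\equiv 0$ and $g\not\equiv 0$ and derive a contradiction.

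The heart of the argument is a constancy statement for $g^*$. On the lower triangle $\{v\leqslant u\}$ semilinearity gives $C(u,v)=v\,\delta(u)/u$, which is linear in $v$, whereas \eqref{inverse maxmin} gives $C(u,v)=uv-f(u)g(v)$ wherever $C>0$. If $\delta(u)>0$, then the semilinear value $v\,\delta(u)/u$ is strictly positive for every $v\in(0,u]$, so the two expressions agree on the whole segment; dividing by $v$ yields $f(u)\,g^*(v)=u-\delta(u)/u$, a quantity independent of $v$. Hence, whenever $f(u)>0$ and $\delta(u)>0$, the function $g^*$ is constant on the entire interval $(0,u]$.

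To propagate this over a full interval I would set $u_1:=\sup\{u:f(u)>0\}$, which is positive since $f\not\equiv 0$; because $f^*$ is nonincreasing by \textbf{(G3)} and $f^*(1)=0$ by \textbf{(G1)}, one has $f>0$ on $(0,u_1)$, and using continuity of $f,g$ on $(0,1]$ one gets $f^*(u)\to 0$ and $g^*(u)\to g^*(u_1)<\infty$ as $u\uparrow u_1$. Consequently $f^*(u)g^*(u)\to 0$, so $\delta(u)=u^2\max\{0,1-f^*(u)g^*(u)\}>0$ for $u$ just below $u_1$; letting $u\uparrow u_1$ in the constancy statement shows $g^*\equiv c$ on $(0,u_1)$ for some constant $c\geqslant 0$. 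It remains to show $c=0$. If $u_1=1$ this is immediate from $g^*(1)=0$ and continuity of $g^*$. If $u_1<1$, then for $v\in(u_1,1)$ we have $f(v)=0$, hence $\delta(v)=v^2$; feeding this into the semilinear form on the upper triangle for a fixed $u\in(0,u_1)$ gives $C(u,v)=u\,\delta(v)/v=uv$, while \eqref{inverse maxmin} gives $\max\{0,uv-f(u)g(v)\}$, and since $f(u)>0$ and $uv>0$ this forces $g(v)=0$. Thus $g\equiv 0$ on $(u_1,1)$, and continuity of $g^*$ at $u_1$ gives $c=g^*(u_1)=0$. Either way $g\equiv 0$, contradicting $g\not\equiv 0$, so $C=\Pi$.

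The main obstacle is the step just sketched: one must ensure that the positive region of $C$ covers the whole segment $(0,u_1)$, so that the constancy of $g^*$ holds on a genuine interval rather than a fragment, and then pin the slope $c$ to zero. Both rely delicately on the continuity of $f$ and $g$ on $(0,1]$ together with the boundary data $f^*(1)=g^*(1)=0$ of \textbf{(G1)}; note that without the linearity forced by the constancy statement a function $g$ that is positive on $(0,u_1)$ and vanishes at $u_1$ would not be excluded, so that constancy is exactly what collapses the copula onto $\Pi$.
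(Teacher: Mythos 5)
Your proof is correct, but it takes a genuinely different and much longer route than the paper's. The paper's own proof is a two-line comparison of quadrant-dependence orderings: by Lemma \ref{lemma:subindependent} every RMM copula satisfies $C(u,v)\leqslant \Pi(u,v)$ (it is NQD), while every semilinear copula is PQD, i.e.\ $C(u,v)\geqslant \Pi(u,v)$, by \cite[Corollary 5]{DuKoMeSe}; the two inequalities immediately force $C=\Pi$. You never invoke the PQD property of semilinear copulas; instead you exploit the rigidity of the two functional forms: on the lower triangle semilinearity makes $C(u,\cdot)$ linear, which, matched against $uv-f(u)g(v)$ on the region where $C>0$, forces $g^*$ to be constant on $(0,u]$ whenever $f(u)>0$ and $\delta(u)>0$; your limit argument at $u_1=\sup\{u\,;\,f(u)>0\}$, using continuity of the generators on $(0,1]$ and $f^*(1)=g^*(1)=0$ from \textbf{(G1)}, then pins that constant to zero, giving $g\equiv0$ and hence $C=\Pi$. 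I checked the delicate steps and they hold: monotonicity of $f^*$ from \textbf{(G3)} correctly yields $f>0$ on all of $(0,u_1)$, the product $f^*(u)g^*(u)\to0$ as $u\uparrow u_1$ does guarantee $\delta(u)>0$ near $u_1$ so the constancy statement applies on a full interval, and both cases $u_1=1$ and $u_1<1$ are treated correctly. What your route buys is self-containedness: it uses only facts established in this paper (the truncated product form, \textbf{(G1)}--\textbf{(G3)}, continuity of $f,g$ on $(0,1]$), whereas the paper leans on an external result from the semilinear literature; your argument also reveals structurally \emph{why} the two classes are incompatible, namely that semilinearity forces a generator to be a multiple of $t$, which \textbf{(G1)} then kills. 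What it costs is length and delicacy: the quadrant-dependence argument needs no case analysis, no continuity of generators, and no structure at all. (Minor remark: your opening observation that semilinear copulas are symmetric is true but never actually used in your argument.)
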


 \prf
This is immediate by the fact that a semilinear copula is PQD by \cite[Corollary 5]{DuKoMeSe} and an RMM copula is NQD by Lemma \ref{lemma:subindependent}.
 \qed\\

So, RMM copulas seem to have a complementary position to the semilinear copulas. Note also that RMM copulas cannot be obtained from semilinear copulas via reflections (also called flipping transformations).

RMM copulas may have additional asymmetries. The symmetry (or exchangeability) property may be too restrictive, as discussed in \cite{GeNe}. Here we have pointed out some properties of the symmetric RMM copulas that are not shared by the whole class of RMM copulas. Some more will be given in Section \ref{sec:diag}.

\section{ Some specific properties of RMM copulas }\label{sec:sing}

In this section we will consider primarily the singularity and absolute continuity of RMM copulas. Here is some notation to start with. For an RMM copula $C=C_{f,g}$ we introduce the \emph{stand of} $C$ (just for the sake of this paper). It is denoted by $\mathcal{S}(C)$ and defined as the closure of the set
\[
    \{(u,v)\in[0,1]^2\,;\, f(u)g(v)< uv\}= \{(u,v)\in[0,1]^2\,;\, f^*(u)g^*(v)< 1\}.
\]
It is also the closure of the set
\[
    \{(u,v)\in[0,1]^2\,;\, C(u,v)>0\}.
\]
The closure of the complement of $\mathcal{S}(C)$ will be called the \emph{zero set of} $C$ and denoted by $\mathcal{Z}(C)$; clearly

\[
    \mathcal{Z}(C)=\{(u,v)\in[0,1]^2\,;\, C(u,v)=0\}.
\]

Here are some examples of RMM copulas with or without singular component in addition to $W(u,v)$ for which we already know that it is fully singular RMM copula.

\begin{example}\label{regular}
  There are families of RMM copulas that are absolutely continuous.
\end{example}

\begin{proof}
  The functions
\[{\renewcommand{\arraystretch}{1.5}
    f_a(t)=\left\{
             \begin{array}{ll}
               at, & \hbox{$\displaystyle 0\leqslant t\leqslant \frac{1}{2}$,} \\
               a(1-t), & \hbox{$\displaystyle\frac{1}{2}\leqslant t\leqslant 1$,}
             \end{array}
           \right.\quad \hbox{and}\quad g_b(t)=bt(1-t),
}\]
for $a,b\in(0,1)$ satisfy conditions {\textbf{(G1)}}--{\textbf{(G3)}}. The corresponding two-parameter family of RMM copulas $C:=C_{f_a,g_b}(u,v)$ has the densities
\[
    c=\frac{\partial^2 C}{\partial u\partial v}\quad \mbox{such that}\quad \int_{0}^{1}\int_{0}^{1} c\, du\, dv=1
\]
as a straightforward computation reveals. The desired conclusion then follows by \cite{DuSe,Nels}. Note that $c$ is nonzero everywhere on $[0,1]^2$. Observe also that if we replace $a,b$ by some $a',b'\in(0,1)$ such that $ab=a'b'$, the copula $C$ remains unchanged. So, the generators of an RMM copula $C$ are not uniquely determined by it (cf. the remark at the end of this section).
\end{proof}

\begin{example}
  There is a class of SRMM copulas that belongs to the class of EFGM (Eyraud-Farlie-Gumbel-Morgenstern\footnote{Here we are following the notiation of \cite[Section 6.3]{DuSe}}) copulas.
\end{example}

\begin{proof}
  Let $C_a=C_{f_a}$ be a class of SRMM copulas generated by $f_a(t)=at(1-t)$ for $a\in(0,1)$. Then
\[
    C_a(u,v)=uv-a^2uv(1-u)(1-v)
\]
are EFGM copulas (see \cite[p.\ 193]{DuSe}). They are absolutely continuous with density $c_a(u,v)=1-a^2+2a^2(u+v-2uv)$.
\end{proof}

\begin{example}\label{example:singular}
  There are families of RMM copulas that have both a nontrivial absolutely continuous component and a singular component.
\end{example}

\begin{proof}
  We give three examples of the kind with slightly different properties.

\textbf{(a)} Functions
\[{\renewcommand{\arraystretch}{1.5}
    f_\theta(t)=\left\{
             \begin{array}{ll}
               \displaystyle\left(\frac{1}{\theta}-1\right) t, & \hbox{$0\leqslant t\leqslant \theta$,} \\
               (1-t), & \hbox{$\theta\leqslant t\leqslant 1$,}
             \end{array}
           \right.\quad \hbox{and}\quad g_\eta(t)=\left\{
             \begin{array}{ll}
               \displaystyle\left(\frac{1}{\eta}-1\right) t, & \hbox{$0\leqslant t\leqslant \eta$,} \\
               (1-t), & \hbox{$\eta\leqslant t\leqslant 1$,}
             \end{array}
           \right.
}\]
for $\theta,\eta\in(0,1)$, 
satisfy conditions {\textbf{(G1)}}--{\textbf{(G3)}}. Each copula of the corresponding two-parameter family of RMM copulas $C:=C_{f_\theta, g_\eta}(u,v)$ has the density
\[
    c=\frac{\partial^2 C}{\partial u\partial v}\quad \mbox{such that}\quad \int_{0}^{1}\int_{0}^{1} c\, du\, dv=\min\{\theta+\eta,1\}
\]
as seen via a straightforward computation. So, copula $C$ under consideration has both the continuous component and the singular component if $\theta+\eta<1$. The latter is distributed uniformly along the segment
\begin{equation}\label{singular_a}
  \{(u,1-u)\,;\,u\in[\theta,1-\eta]\},
\end{equation}
as it is immediately checked. In the limit $\theta,\eta\rightarrow0$ we obtain copula $W$.

\textbf{(b)} A one-parameter family of SRMM copulas is given by the function
\[{\renewcommand{\arraystretch}{1.5}
    f_\delta(t)=\left\{
                  \begin{array}{ll}
                    0, & \hbox{$t=0$;} \\
                    \delta, & \hbox{$0<t\leqslant \delta$;} \\
                    t, & \hbox{$\displaystyle\delta\leqslant t\leqslant \frac{1}{2}$;} \\
                    1-t, & \hbox{$\displaystyle\frac{1}{2}\leqslant t\leqslant1$,}
                  \end{array}
                \right.
}\]
for $\delta\in\left(0,\frac{1}{2}\right)$, satisfying conditions {\textbf{(G1)}}--{\textbf{(G3)}}. After performing a direct computation we obtain that the integral of the density function of these copulas over $[0,1]^2$ equals to $1-2\delta \ln2$. The singular component is distributed along the two arcs
\begin{equation}\label{singular_b}
  \left\{\left(u,\frac{\delta(1-u)}{u}\right)\,;\,
u\in\left[\frac{1}{2},1\right]\right\}\quad\mbox{and}\quad
\left\{\left(\frac{\delta(1-v)}{v},v\right)\,;\,
v\in\left[\frac{1}{2},1\right]\right\}
\end{equation}
and we can verify quickly that the mass of each arc equals $\delta \ln2\approx 0.69\,\delta$.

\textbf{(c)} The functions
\[
    f(t)=t(1-t)\quad\mbox{and}\quad g_\mu(t)=\left\{
                                               \begin{array}{ll}
                                                 0, & \hbox{$t=0,t\geqslant \mu$;} \\
                                                 \mu-t, & \hbox{$0<t\leqslant \mu$,}
                                               \end{array}
                                             \right.
\]
for $\mu\in(0,1]$ satisfy conditions {\textbf{(G1)}}--{\textbf{(G3)}}. Each copula of the corresponding one-parameter family of RMM copulas has the density whose integral over the unit square is equal to $1-\mu\,(\ln4-1)\approx1-0.38\, \mu$. A painless computation shows that the singular mass is distributed along the arc
\begin{equation}\label{singular_c}
  \left\{\left(u,\frac{\mu(1-u)}{2-u}\right)\,;\,u\in[0,1]\right\}.
\end{equation}
\end{proof}

To be able to show the differences of Examples \ref{example:singular} let us first recall the notation for the level set of a copula $C$
\[
    L_t(C)=\{(u,v)\in[0,1]^2\,;\,C(u,v)=t\}
\]
for any $t\in(0,1]$. For $t=0$ the level set is defined differently (see \cite{Nels,DuSe}), i.e.
\[
    L_0(C)=\mathcal{S}(C)\cap\mathcal{Z}(C)\cap(0,1)^2,
\]
if this intersection is nonempty, or
\[
    L_0(C)=\{[t,0],[0,t]\,;\,t\in[0,1]\}
\]
if $\mathcal{S}(C)=[0,1]^2$.

So, in the case of RMM copulas all of the level sets are curves given by equations
\[
    uv-f(u)g(v)=t.
\]
It is not always possible to give an explicit equation of either the form $u=\varphi_t(v)$ or $v=\psi_t(v)$ for set $L_t$ as it is seen for $L_0$ of the family of copulas presented in Example \ref{example:singular}\textbf{(b)}. There $L_0$ is the union of arcs given by \eqref{singular_b} and segments
\[
    \left\{\left(u,\frac{1}{2}\right)\,;\,\delta\leqslant u\leqslant\frac{1}{2}\right\}\cup \left\{\left(\frac{1}{2},v\right)\,;\,\delta\leqslant v\leqslant\frac{1}{2}\right\}.
\]
 Also, level curves $L_t$ for small $t>0$ of the same example are not convex functions.

Now, among copulas of Example \ref{example:singular} with nontrivial singular component there are those, whose singular component is only a part of the boundary $L_0(C)$ of their stand. This happens, say, in Example \ref{example:singular}\textbf{(a)} with $\theta+\eta<1$, where the singular component is given by Formula \eqref{singular_a}; and also in Example \ref{example:singular}\textbf{(b)}. On the other hand, the copulas presented in Example \ref{example:singular}\textbf{(c)} have their singular components distributed along the entire level set $L_0(t)$ expressed by \eqref{singular_c}; and the same is true for copula $W$. Among the absolutely continuous copulas there are those with density nonzero everywhere on the unit square as is the case for copulas of Example \ref{regular}, and those whose density have parts where it is identically equal to zero as in Example \ref{example:singular}\textbf{(a)} with $\theta+\eta=1$.

\begin{figure}[h!]
  \centering
\includegraphics[width=0.32\textwidth]{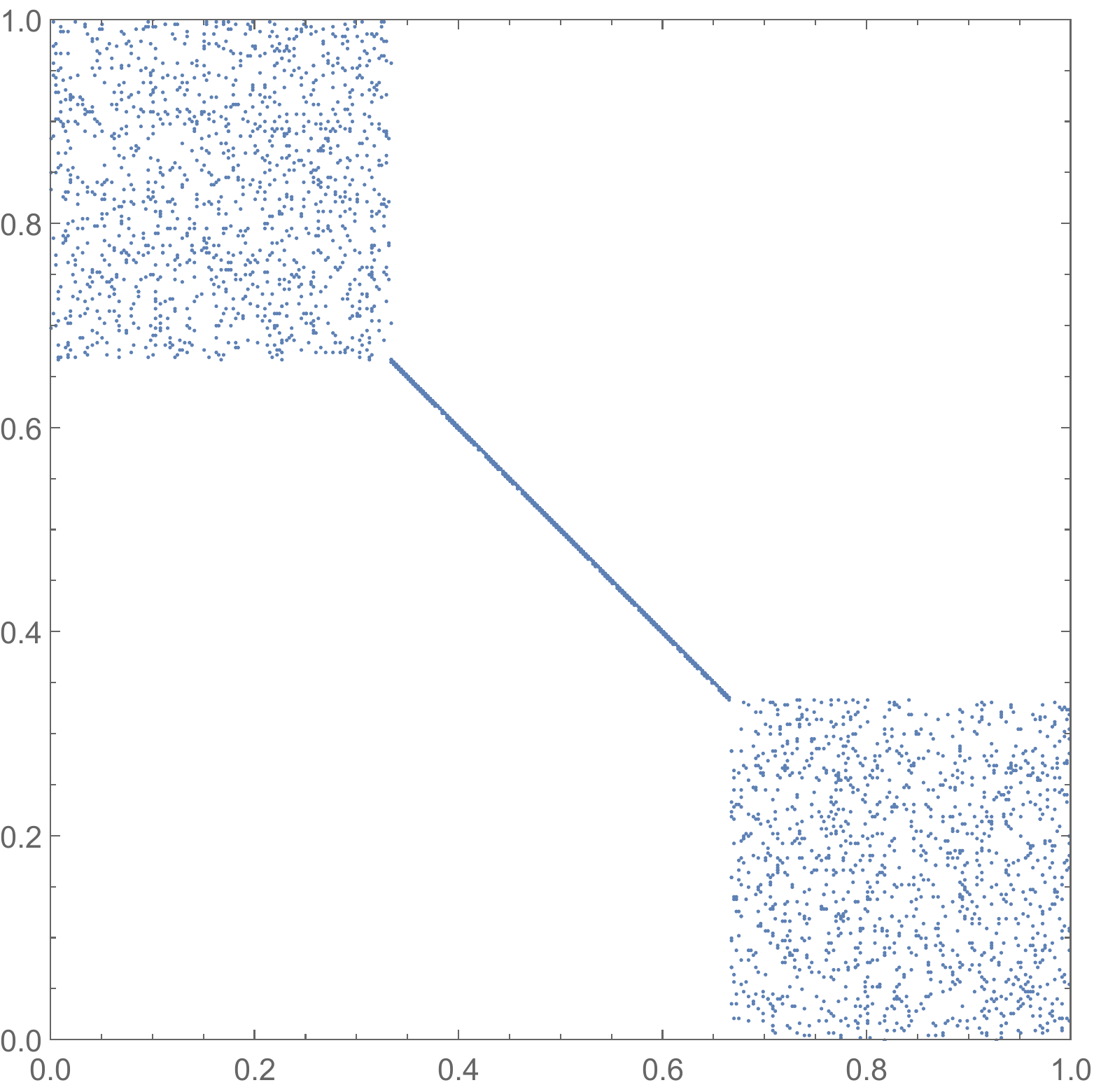} \hfil \includegraphics[width=0.32\textwidth]{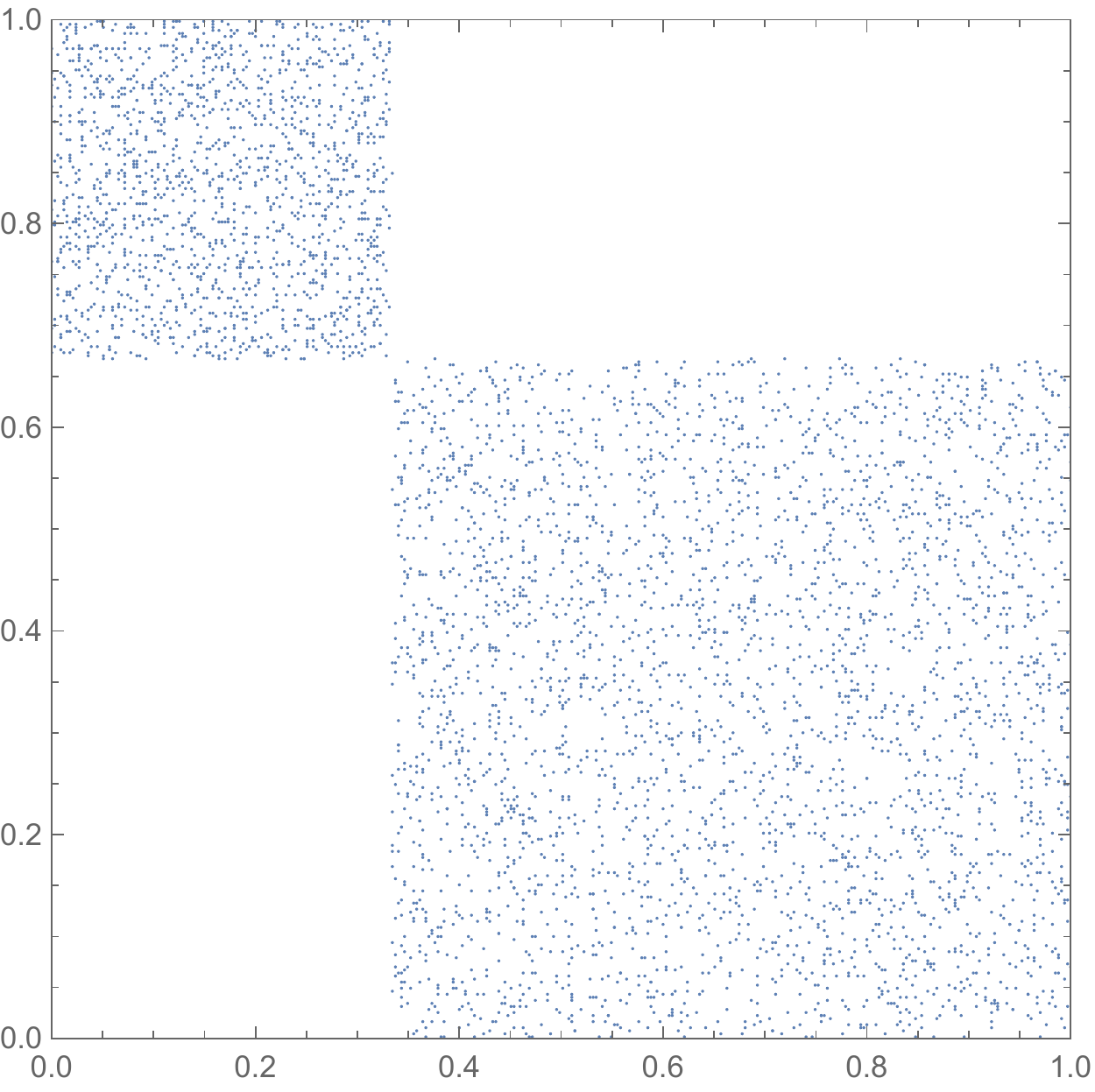} \hfil \includegraphics[width=0.32\textwidth]{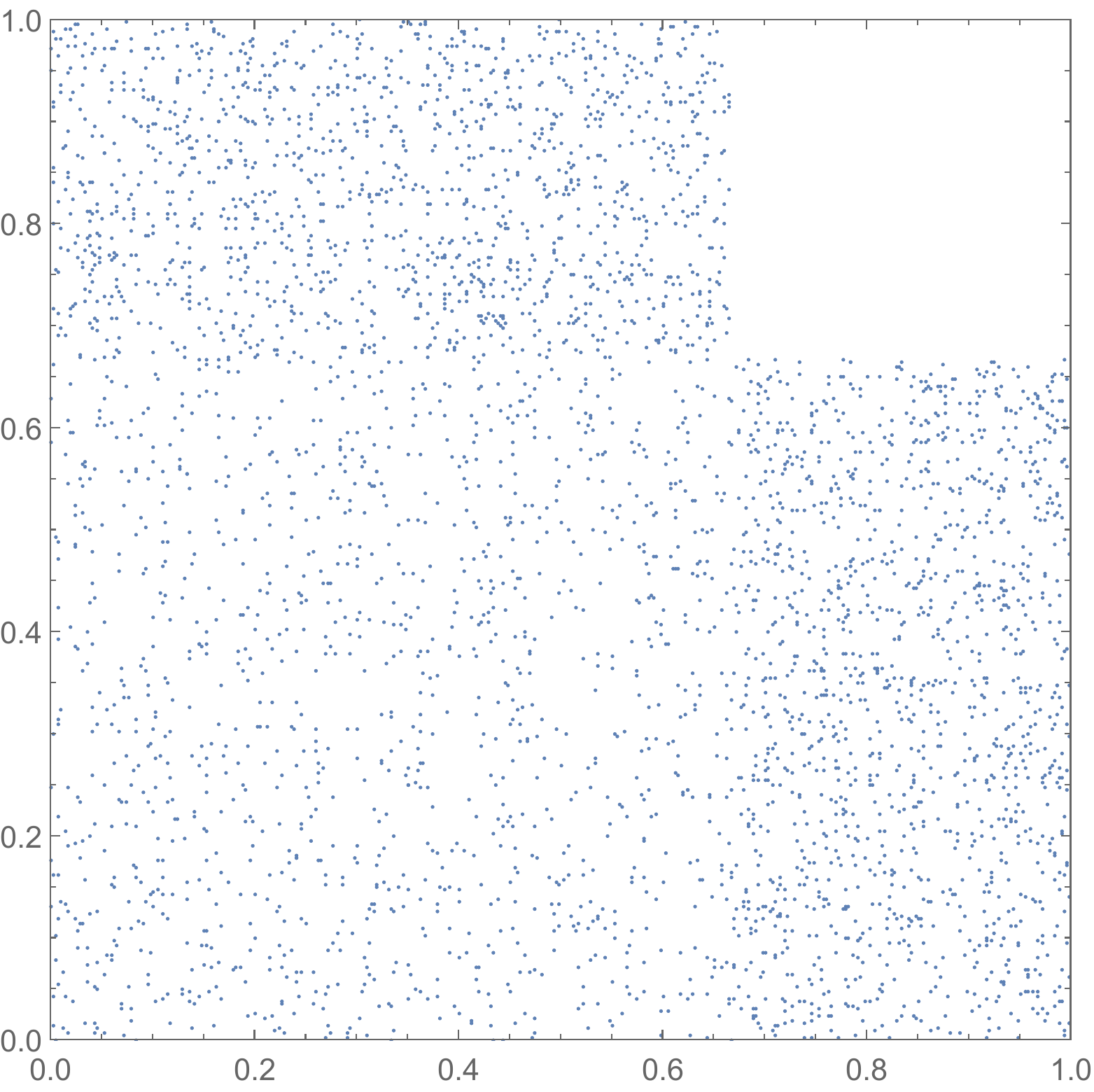} \\ \includegraphics[width=0.32\textwidth]{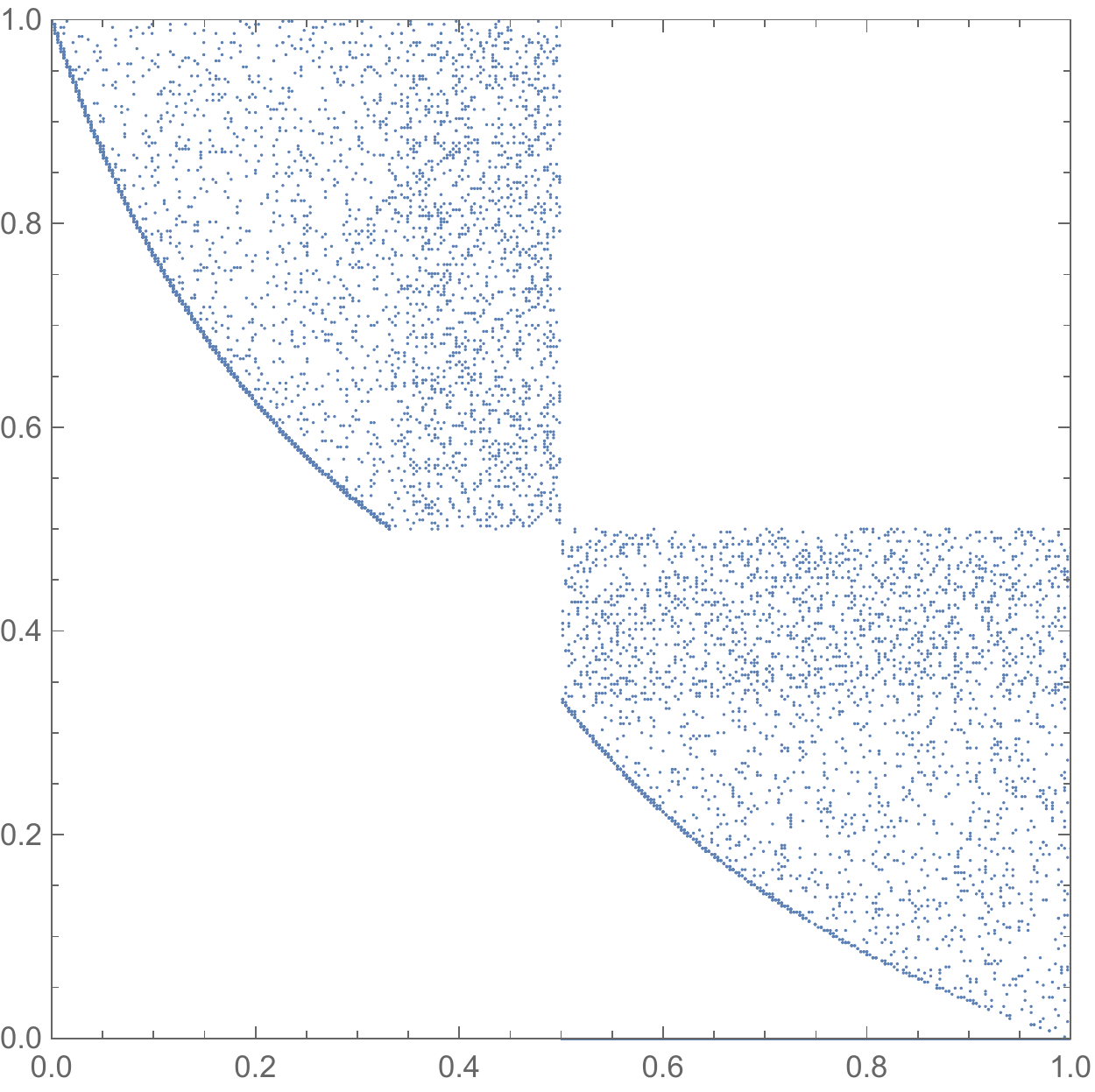} \hfil \includegraphics[width=0.32\textwidth]{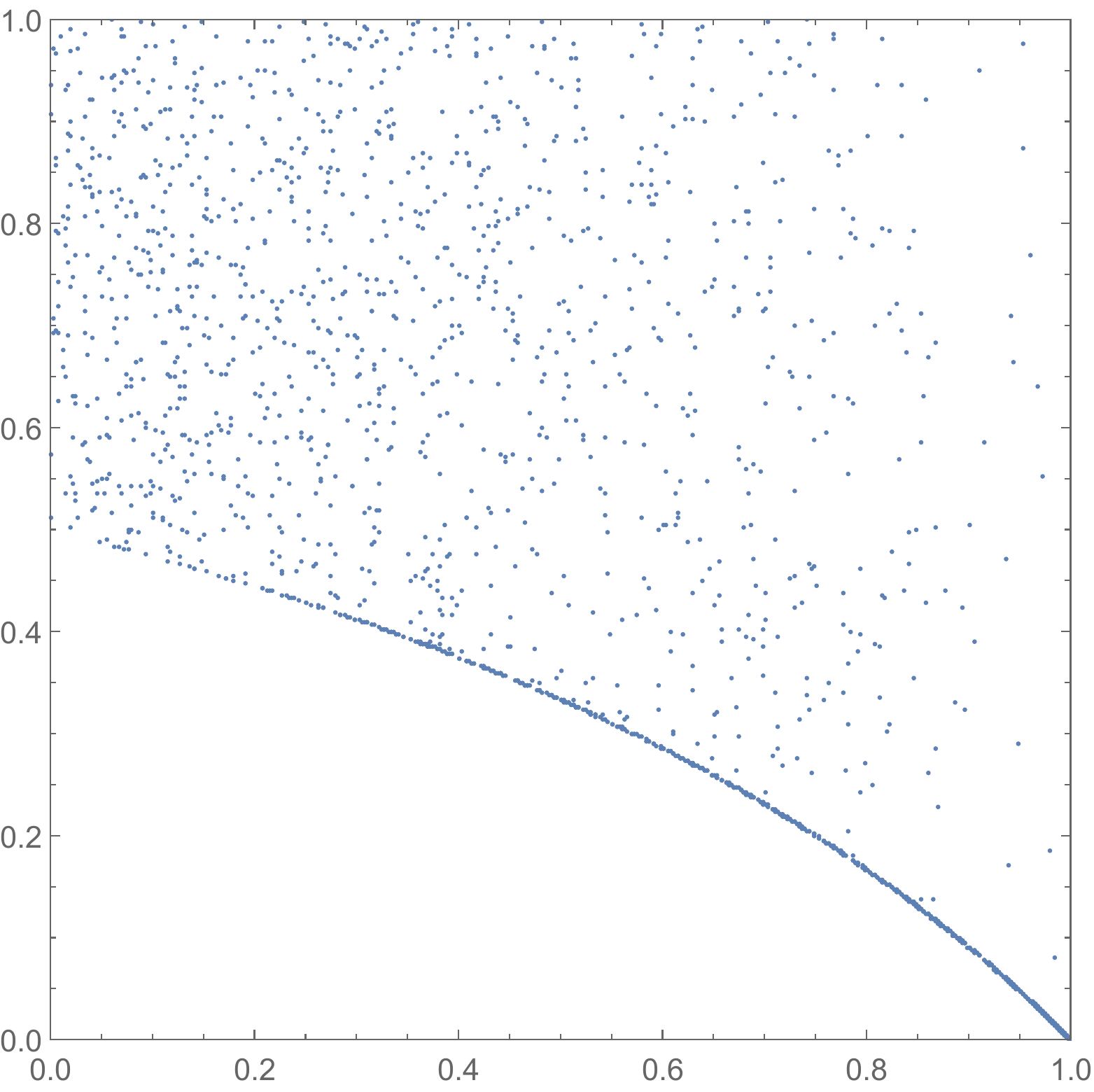} \hfil \includegraphics[width=0.32\textwidth]{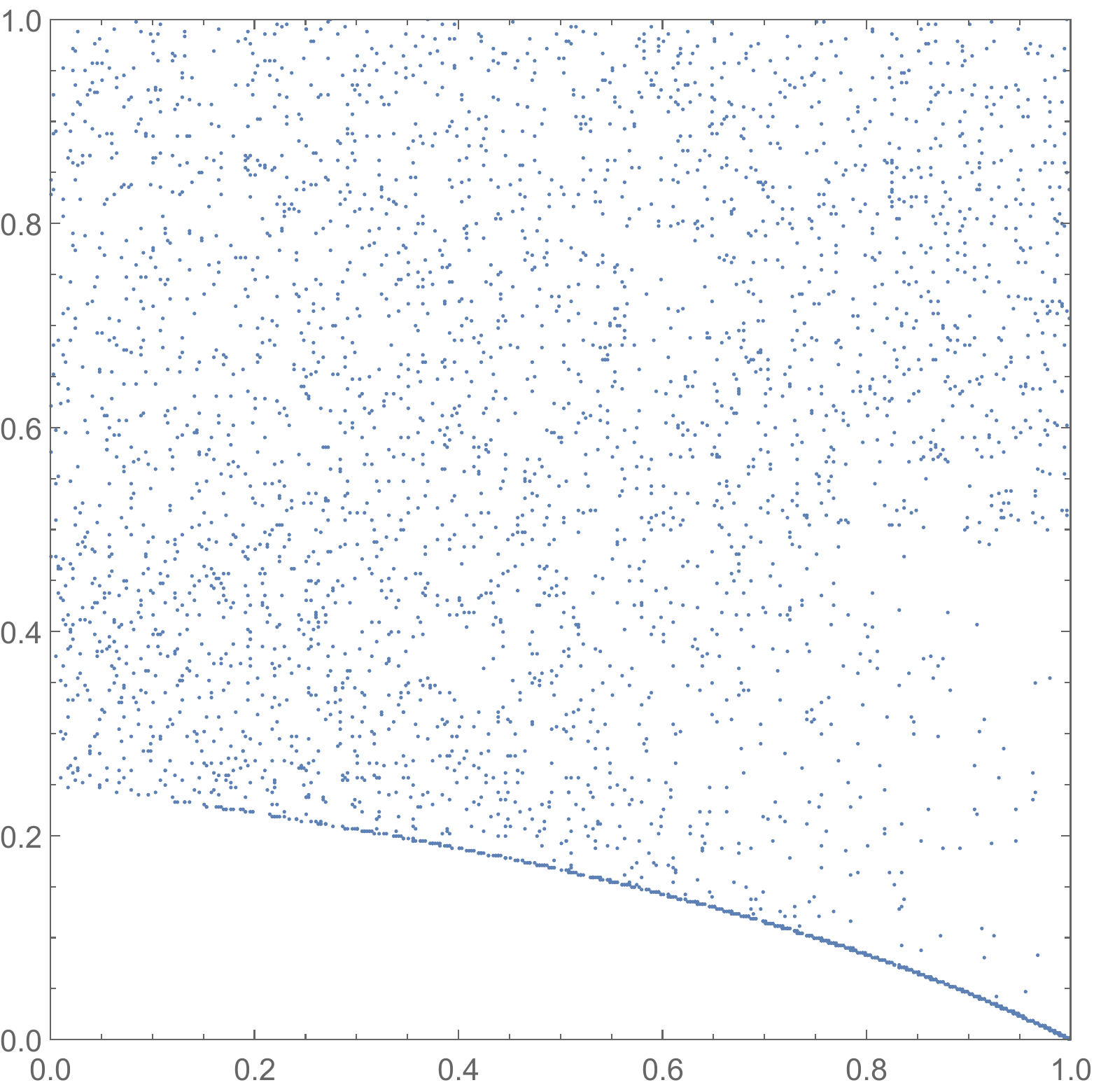}
  \caption{Copulas of Example \ref{example:singular}}
\end{figure}

\begin{spacing}{1.1}
Scatterplots of some of the copulas from Example \ref{example:singular} are presented in Figure 1. In the first row of the figure, there are three copulas from \ref{example:singular}\textbf{(a)}, i.e.\ those for $\displaystyle \theta=\eta=\frac13$, $\displaystyle\theta=\frac13$, $\displaystyle \eta=\frac23$, and $\displaystyle\theta=\eta=\frac23$, respectively. The first one has a nontrivial singular part, while the other two are absolutely continuous. In the second row, there are copulas from \ref{example:singular}\textbf{(b)} for $\displaystyle\delta=\frac13$, and \ref{example:singular}\textbf{(c)} for $\displaystyle\mu=1$ and $\displaystyle\mu=\frac12$, respectively.
\end{spacing}

\begin{figure}[h!]  \centering  \includegraphics[width=0.32\textwidth]{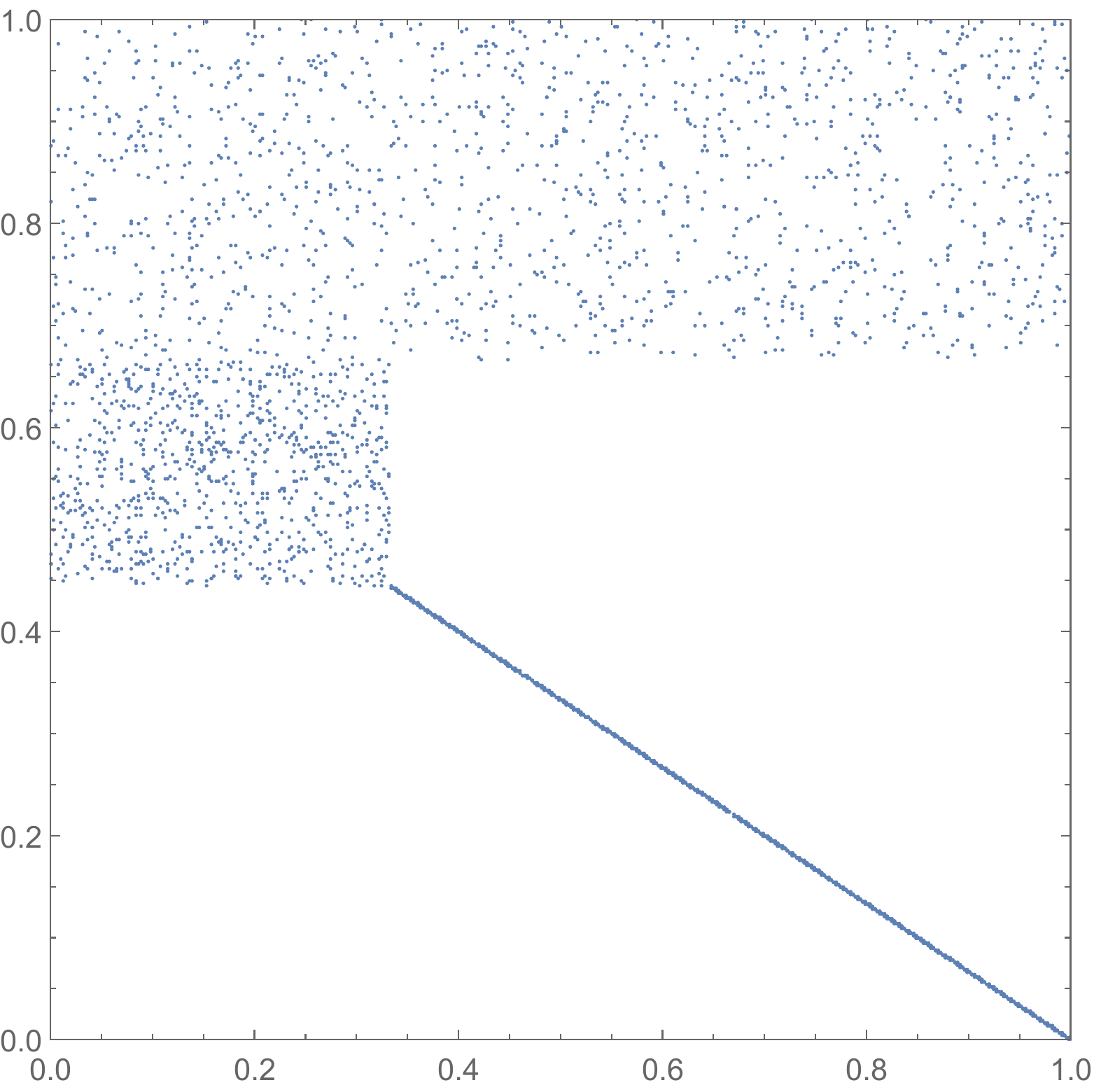} \hfil \includegraphics[width=0.32\textwidth]{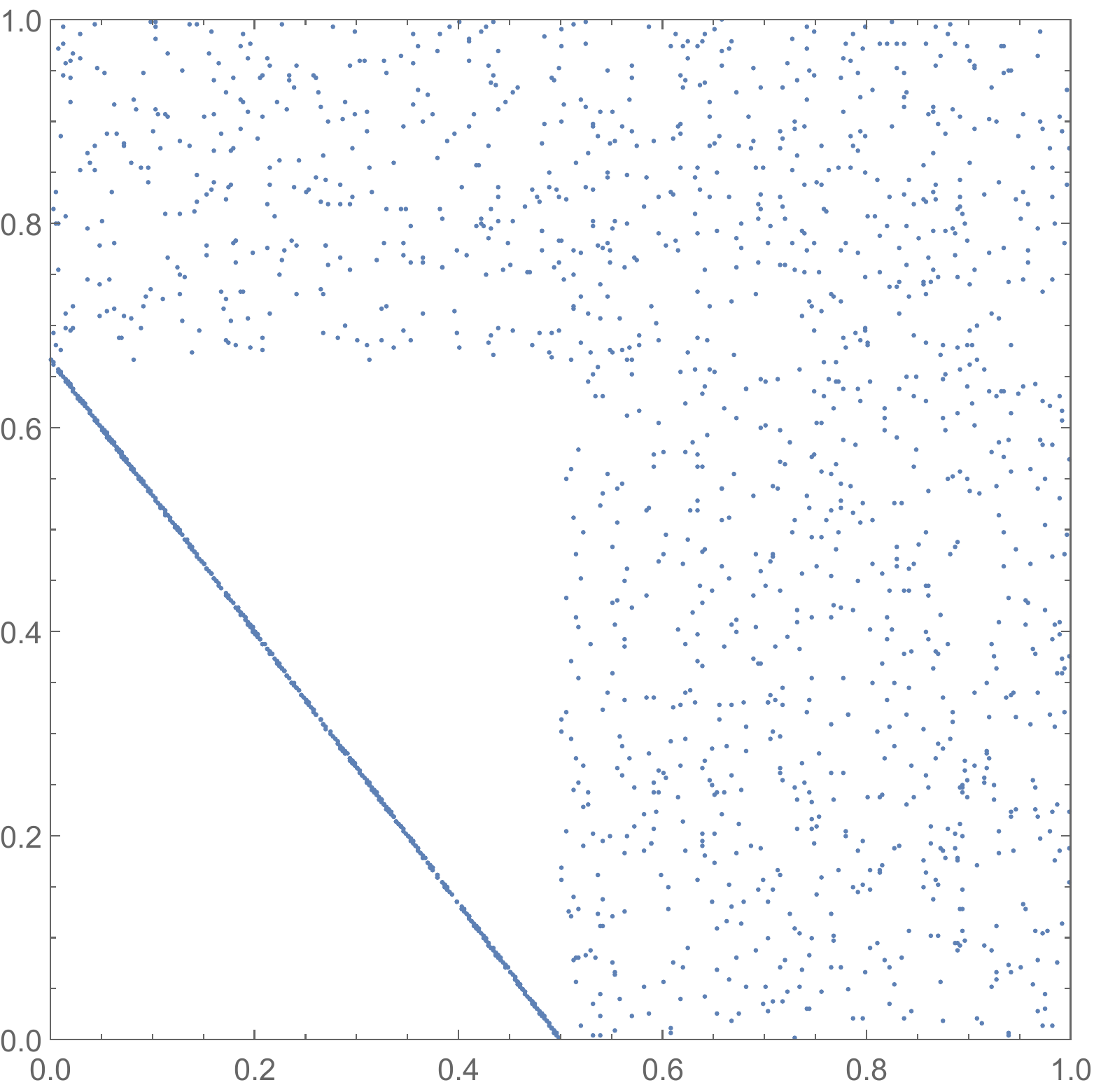} \hfil \includegraphics[width=0.32\textwidth]{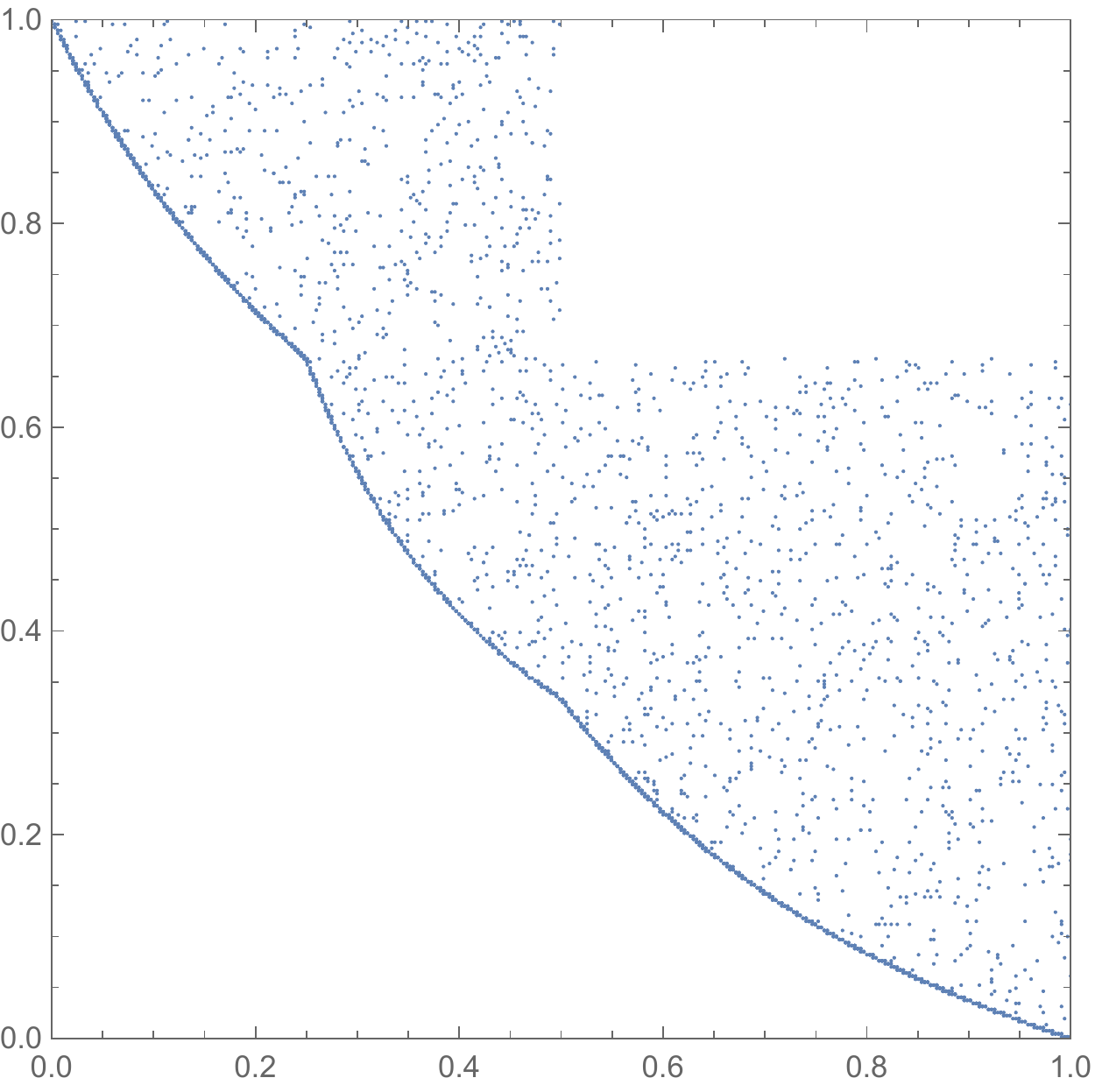} \\ \includegraphics[width=0.32\textwidth]{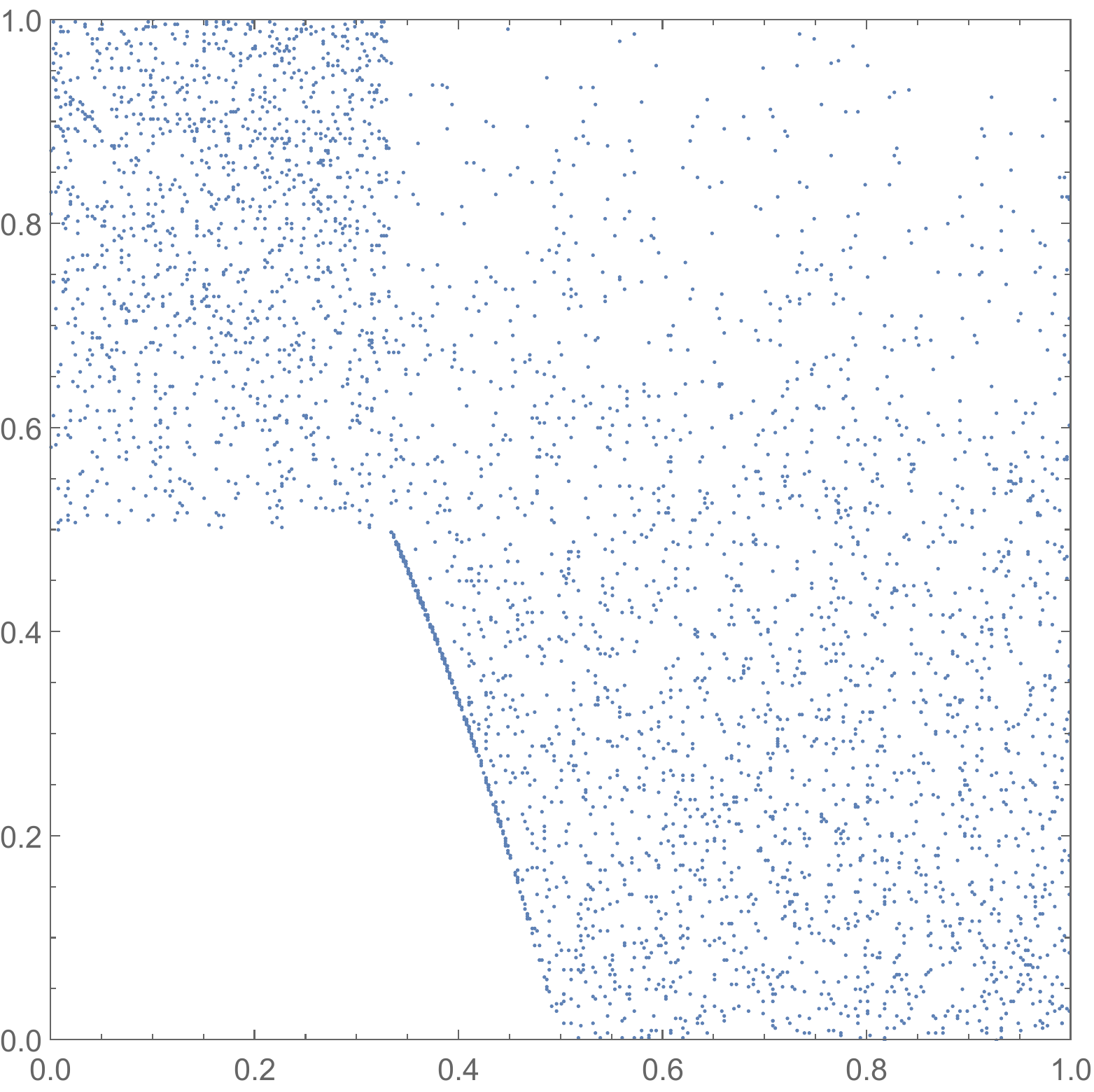} \hfil \includegraphics[width=0.32\textwidth]{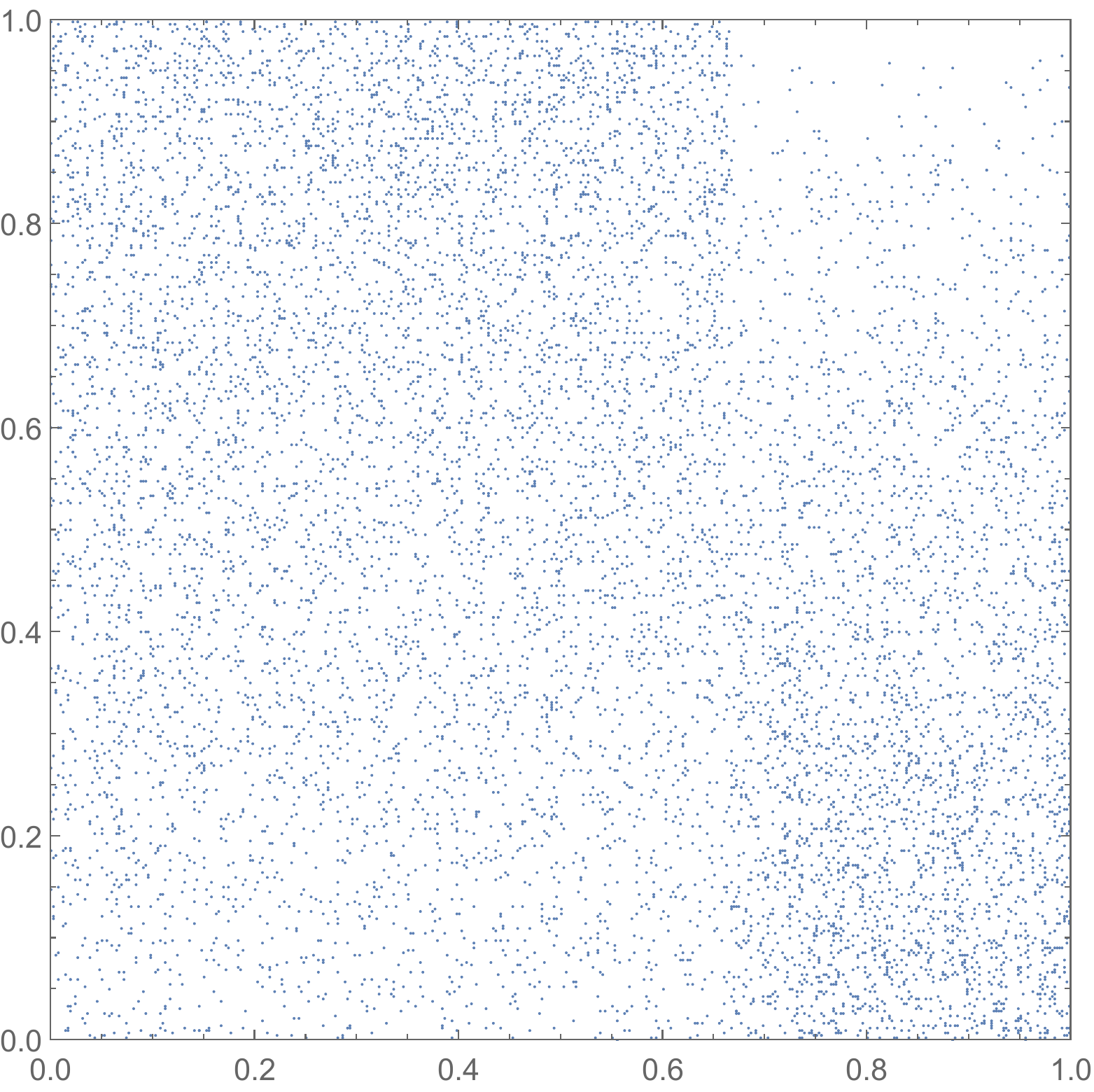}   \hfil \includegraphics[width=0.32\textwidth]{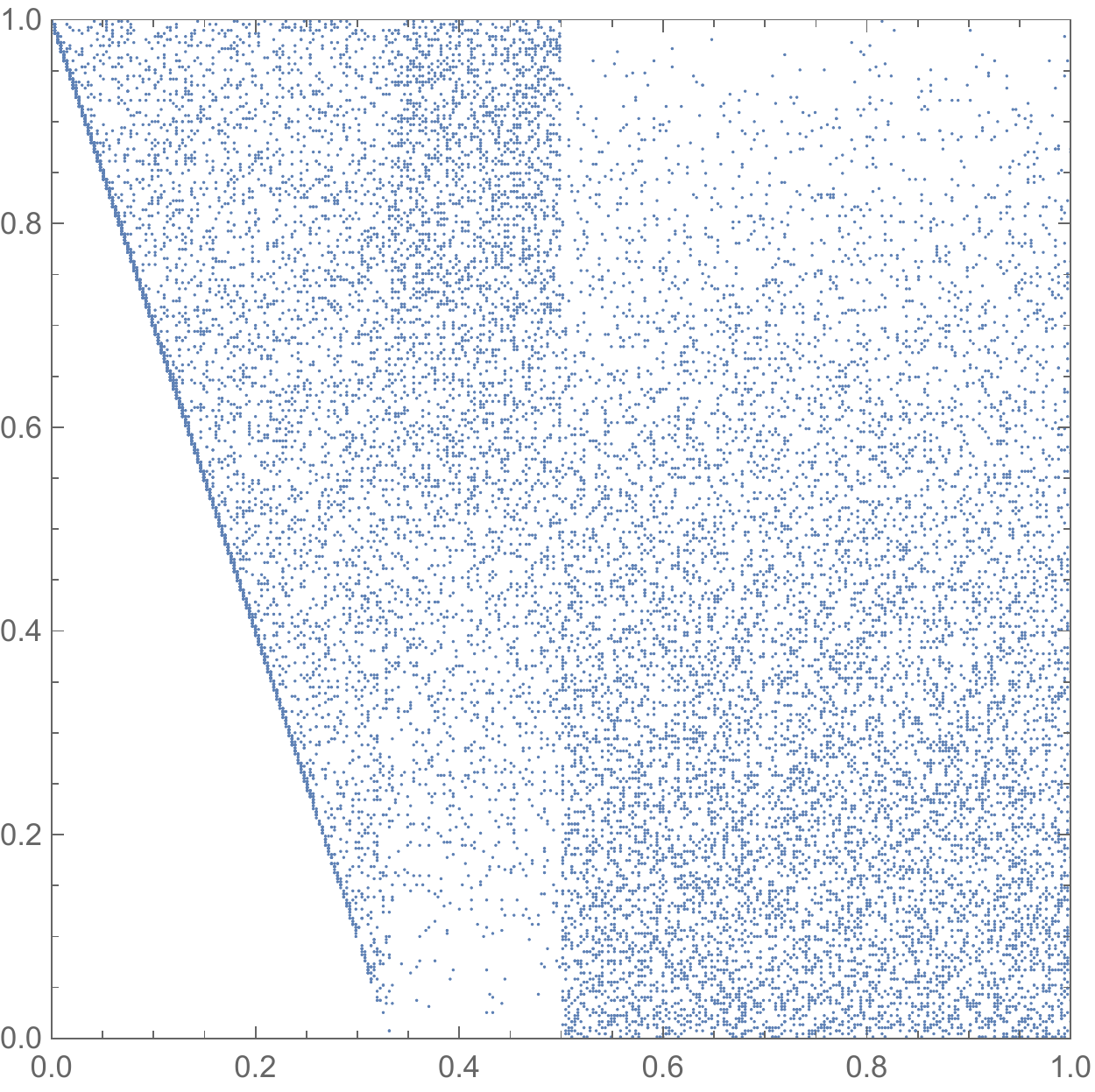}            \caption{ Some more reflected maxmin copulas}   \end{figure}

We include some more scatterplots of RMM copulas in Figure 2. The generators of the copulas in the first row are respectively equal to
\[
{\renewcommand{\arraystretch}{1.5}
    f(t) = \left\{
             \begin{array}{ll}
               2t, & \hbox{if $\displaystyle0\leqslant t\leqslant \frac13$;} \\
                1 - t, & \hbox{otherwise;}
             \end{array}
           \right.
\ \ \mbox{and}\ \ g(t)= \left\{
                          \begin{array}{ll}
                            \displaystyle \frac23 - t, & \hbox{if $\displaystyle 0< t\leqslant \frac23$;} \\
                            0, & \hbox{otherwise;}
                          \end{array}
                        \right.}
\]
\[
{\renewcommand{\arraystretch}{1.5}
    f(t)=\left\{
           \begin{array}{ll}
             \displaystyle \frac12 - t, & \hbox{if $\displaystyle  0< t\leqslant \frac12$; } \\
             0, & \hbox{otherwise;}
           \end{array}
         \right.
    \ \ \mbox{and}
    \ \ g(t)=\left\{
                 \begin{array}{ll}
                   \displaystyle\frac23 - t, & \hbox{if $\displaystyle 0< t\leqslant \frac12$; } \\
                   0, & \hbox{otherwise;}
                 \end{array}
               \right.
}\]
and
\[
{\renewcommand{\arraystretch}{1.5}
    f(t)=\left\{
           \begin{array}{ll}
             0, & \hbox{if $t= 0$}; \\
             \displaystyle \frac12, & \hbox{if; $\displaystyle  0< t\leqslant \frac12$}; \\
             1-t, & \hbox{otherwise;}
           \end{array}
         \right.
    \ \ \mbox{and}
    \ \ g(t)=\left\{
                 \begin{array}{ll}
                   0, & \hbox{if $t= 0$}; \\
                   \displaystyle\frac12, & \hbox{if $\displaystyle 0< t\leqslant \frac12$}; \\
                   1-t, & \hbox{otherwise.}
                 \end{array}
               \right.
}\]

The second generator of all copulas in the second row equals $g(t)= t(1-t)$, while the first one equals respectively
\[
{\renewcommand{\arraystretch}{1.5}
    f(t)=\left\{
           \begin{array}{ll}
             2t, & \hbox{if $\displaystyle 0\leqslant t\leqslant \frac13$;} \\
             1-t, & \hbox{otherwise;}
           \end{array}
         \right.
\ \ \ f(t)=\left\{
             \begin{array}{ll}
               \displaystyle\frac12 t, & \hbox{if $\displaystyle0\leqslant t\leqslant\frac23$;} \\
               1-t, & \hbox{otherwise};
             \end{array}
           \right.
}
\]
and
\[
{\renewcommand{\arraystretch}{1.8}
    f(t)=\left\{
           \begin{array}{ll}
             0, & \hbox{if $t=0$;} \\
             \displaystyle\frac13, & \hbox{if $\displaystyle 0<t\leqslant\frac13$;} \\
             t, & \hbox{if $\displaystyle \frac13\leqslant t\leqslant\frac23$;} \\
             1-t, & \hbox{otherwise.}
           \end{array}
         \right.
}
\]\\

Next, we give some regularity properties of the functions $f, g$ and consequently of the reflected maxmin copula.

\begin{proposition}\label{odvod}
    For a given reflected maxmin copula $C_{f,g}$ we have:
     \begin{description}
       \item[(a)] $f$ and $g$ are differentiable a.e. (w.r.\ to Lebesgue measure) on $(0,1)$, and $f'\geqslant-1$ and $g'\geqslant-1$.
       \item[(b)] $C_{f,g}$ is partially differentiable a.e. (w.r.\ to Lebesgue measure) w.r.\ to $u$ and the partial derivative is no greater that $\widehat{g}(v)$.
       \item[(c)] $C_{f,g}$ is partially differentiable a.e. (w.r.\ to Lebesgue measure) w.r.\ to $v$ and the partial derivative is no greater that $\widehat{f}(u)$.
     \end{description}
\end{proposition}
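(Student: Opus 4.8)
The plan is to derive everything from the monotonicity facts \textbf{(G2)} and \textbf{(G3)} together with the classical theorem that a monotone function is differentiable almost everywhere. For part \textbf{(a)}, recall from \textbf{(G2)} that $\widehat{f}(u)=f(u)+u$ and $\widehat{g}(v)=g(v)+v$ are nondecreasing on $[0,1]$. By Lebesgue's differentiation theorem for monotone functions, $\widehat{f}$ and $\widehat{g}$ are differentiable a.e.\ on $(0,1)$, and at every point of differentiability their derivatives are nonnegative. Since $u\mapsto u$ is differentiable with derivative $1$, the functions $f=\widehat{f}-\mathrm{id}$ and $g=\widehat{g}-\mathrm{id}$ are differentiable a.e., and at such points $f'=\widehat{f}\,'-1\geqslant-1$ and likewise $g'\geqslant-1$. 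This settles \textbf{(a)}.

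For part \textbf{(b)} I would separate the existence of the partial derivative from the bound. Existence is immediate from the fact that $C_{f,g}$ is a copula (Theorem \ref{copula}): every copula is $1$-Lipschitz in each variable, so for each fixed $v$ the map $u\mapsto C_{f,g}(u,v)$ is Lipschitz, hence differentiable for a.e.\ $u$; applying Tonelli's theorem to the measurable set of points of nondifferentiability (whose $v$-sections are all null) then gives differentiability for a.e.\ $(u,v)$. For the bound I would argue via difference quotients, using the elementary inequality $\max\{0,a\}-\max\{0,b\}\leqslant\max\{0,a-b\}$. Writing $a=u_2v-f(u_2)g(v)$ and $b=u_1v-f(u_1)g(v)$ for $u_1<u_2$ gives
\[
    \frac{C_{f,g}(u_2,v)-C_{f,g}(u_1,v)}{u_2-u_1}\leqslant\max\left\{0,\,v-\frac{f(u_2)-f(u_1)}{u_2-u_1}\,g(v)\right\}.
\]
Letting $u_2\downarrow u_1$ at a point $u_1$ where both $\partial_u C_{f,g}$ and $f'$ exist (a.e.\ by the above and by \textbf{(a)}), the right-hand side tends to $\max\{0,\,v-f'(u_1)g(v)\}$. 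Since $f'(u_1)\geqslant-1$ and $g(v)\geqslant0$, we have $v-f'(u_1)g(v)\leqslant v+g(v)=\widehat{g}(v)$, and also $0\leqslant\widehat{g}(v)$; hence the partial derivative is at most $\widehat{g}(v)$, as claimed.

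Part \textbf{(c)} follows by the same argument with the roles of the two variables interchanged: for fixed $u$ the map $v\mapsto C_{f,g}(u,v)$ is Lipschitz and differentiable a.e., and the difference-quotient bound yields $\partial_v C_{f,g}\leqslant\max\{0,\,u-g'(v)f(u)\}\leqslant u+f(u)=\widehat{f}(u)$, using $g'\geqslant-1$ and $f(u)\geqslant0$. The only delicate point in the whole argument is the behavior of $C_{f,g}$ on the boundary $L_0$ of its stand, where the truncation by $\max\{0,\cdot\}$ could in principle destroy differentiability; the difference-quotient formulation handles this automatically, since existence of the derivative a.e.\ is supplied by the copula's Lipschitz property while the stated bound holds wherever the derivative exists, so no separate analysis of $L_0$ is needed.
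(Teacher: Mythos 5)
Your proposal is correct, and part \textbf{(a)} coincides with the paper's own argument: Lebesgue differentiation of the nondecreasing function $\widehat f$, then $f'=\widehat f\,'-1\geqslant-1$ a.e., and likewise for $g$. For parts \textbf{(b)} and \textbf{(c)}, however, you take a genuinely different route. The paper deduces existence of $\partial C_{f,g}/\partial u$ from the a.e.\ differentiability of $f$ itself and then writes an explicit piecewise formula for the partial derivative (equal to $0$ where $uv<f(u)g(v)$ and to $v-f'(u)g(v)$ on the stand), after which the bound $v-f'(u)g(v)\leqslant v+g(v)=\widehat g(v)$ follows from $f'\geqslant-1$ exactly as in your final step. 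You instead obtain existence from the $1$-Lipschitz property of the copula $C_{f,g}$ (Theorem \ref{copula}) combined with Tonelli's theorem, and you obtain the bound from the elementary inequality $\max\{0,a\}-\max\{0,b\}\leqslant\max\{0,a-b\}$ applied to difference quotients, evaluated at points where both $\partial_u C_{f,g}$ and $f'$ exist. Each approach buys something: the paper's explicit formula carries more information and is reused later (it is essentially the density formula \eqref{density} underlying Theorem \ref{thm:abs} and Proposition \ref{W}), whereas your difference-quotient formulation is more careful at the truncation boundary --- the paper's ``Clearly'' hides the small verification that, for fixed $v$, the kink of $t\mapsto\max\{0,t\}$ can destroy differentiability in $u$ only at zero crossings of $u\mapsto uv-f(u)g(v)$ with nonzero derivative, which are isolated and hence form a null set. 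Your argument needs no such analysis of $L_0$, since the bound is established at every point where the derivative exists at all.
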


\begin{proof}
\textbf{(a)} Since $\widehat{f}$ is a continuous and nondecreasing function it is firstly differentiable almost everywhere (w.r. to the Lebesgues measure) and secondly $\widehat{f}'(u)\geqslant0$ a.e. So, $f'\geqslant-1$ a.e.\ since $f'(u)=\widehat{f}'(u)-1$ a.e.

\textbf{(b)} By part \textbf{(a)} the function $f$ is differentiable a.e. so that the function $C_{f,g}(u,v)$ is differentiable for every $v$ with respect to $u$ a.e. Consequently, the corresponding partial derivative exists a.e.\ with respect to the Lebesgue measure on $[0,1]^2$. Clearly
\[
    \frac{\partial C_{f,g}}{\partial u}(u,v)=
        \left\{
            \begin{array}{ll}
                0, & \hbox{if $uv<f(u)f(v)$;} \\
                v-f'(u)g(v), & \hbox{otherwise}
            \end{array}
        \right.
\]
for all $v$ and a.e.\ w.r.\ to $u$. Now, using the fact that $f'\geqslant-1$ this expression is no greater than $v+g(v)=\widehat{g}(v)$ for all $v$ and a.e.\ w.r.\ to $u$.

\textbf{(c)} The proof is obtained by exchanging the roles of $u$ and $v$ in the proof of part \textbf{(b)}.
\end{proof}

It is time to present the main results of the section. Some of these are adopted from those presented in \cite{RoLaUbFl} where copulas of the form
\begin{equation}\label{RoLaUbFl}
  C(u,v)=uv+f(u)g(v)
\end{equation}
are considered. However, since we allow $uv-f(u)g(v)$ to be negative and use the truncation in this case, we need to reformulate their results. We include some of the proofs for the benefit of the reader. Let us point out that some extensions of the results of \cite{RoLaUbFl} on copulas of the form \eqref{RoLaUbFl} are also given in \cite{DuJa} and in \cite[Section 1.6]{DuSe}. Also, in the results on absolute continuity of the functions $f$ and $g$, and consequently $C_{f,g}$, the reader will be assumed familiar with definitions and properties of such functions as given in \cite[Section 7.3]{Loja}.

\begin{lemma}\label{abs}
  Suppose that functions $f$ and $g$ satisfy conditions {\rm \textbf{(G1)}--\textbf{(G3)}}. Then:
\begin{description}
  \item[(a)] For any $\varepsilon>0$ both functions are absolutely continuous on the interval $[\varepsilon,1]$.
  \item[(b)] For every pair $\varepsilon,\delta>0$ the product $f(u)g(v)$ is absolutely continuous on $[\varepsilon,1]\times[\delta,1]$.
  \item[(c)] For every pair $\varepsilon,\delta>0$ the difference $uv-f(u)g(v)$ is absolutely continuous on $[\varepsilon,1]\times [\delta,1]$.
\end{description}
\end{lemma}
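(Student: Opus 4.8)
The plan is to reduce everything to the one-variable statement in part \textbf{(a)}, since parts \textbf{(b)} and \textbf{(c)} follow from it by the product structure and by elementary stability of absolute continuity under differences. The heart of the matter is to extract a Lipschitz estimate for $f$ and $g$ away from the origin by pairing the two monotonicity conditions of Lemma \ref{properties G}.

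First I would prove \textbf{(a)}. Fix $0<\varepsilon\le 1$ and take $\varepsilon\le u_1<u_2\le 1$. Two-sided control of the increment $f(u_2)-f(u_1)$ comes from combining \textbf{(G2)} with \textbf{(G3)}. Since $\widehat{f}(u)=f(u)+u$ is nondecreasing, one gets the lower bound $f(u_2)-f(u_1)\ge -(u_2-u_1)$. Since $f^*(u)=f(u)/u$ is nonincreasing, cross-multiplying $f(u_2)/u_2\le f(u_1)/u_1$ and rearranging yields $f(u_2)-f(u_1)\le (u_2-u_1)f^*(u_1)$, and because $f\le 1$ we have $f^*(u_1)\le 1/u_1\le 1/\varepsilon$. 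Combining the two estimates gives
\[
    |f(u_2)-f(u_1)|\le \frac{1}{\varepsilon}\,(u_2-u_1),
\]
so $f$ is Lipschitz, hence absolutely continuous, on $[\varepsilon,1]$ (continuity is automatic from the bound); the identical argument applies to $g$. This is the only step that uses the generator conditions in an essential way.

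For \textbf{(b)} I would exploit the product structure. The $fg$-volume of a subrectangle $[u_1,u_2]\times[v_1,v_2]$ factors as
\[
    \bigl(f(u_2)-f(u_1)\bigr)\bigl(g(v_2)-g(v_1)\bigr),
\]
and by part \textbf{(a)} each factor is the integral of its bounded a.e.\ derivative, so this volume equals $\int_{u_1}^{u_2}\!\int_{v_1}^{v_2} f'(s)g'(t)\,dt\,ds$. Thus the signed measure induced by $f(u)g(v)$ on $[\varepsilon,1]\times[\delta,1]$ has density $f'(s)g'(t)$, which lies in $L^\infty\subset L^1$ of the rectangle since $|f'|\le 1/\varepsilon$ and $|g'|\le 1/\delta$ a.e. Together with the fact that each one-variable section $u\mapsto f(u)g(v)$ and $v\mapsto f(u)g(v)$ is a scalar multiple of an absolutely continuous function, this yields absolute continuity of $f(u)g(v)$ in the sense of \cite[Section 7.3]{Loja}.

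Part \textbf{(c)} is then immediate: $uv$ is smooth and hence absolutely continuous on any rectangle, and $f(u)g(v)$ is absolutely continuous by \textbf{(b)}, so their difference $uv-f(u)g(v)$ is absolutely continuous. The one place demanding genuine care is \textbf{(a)}: one must combine \textbf{(G2)} and \textbf{(G3)} in exactly the right way, with one condition supplying the lower slope bound $-1$ and the other the upper slope bound $1/\varepsilon$, to obtain the Lipschitz constant; and I would note that this constant blows up as $\varepsilon\downarrow 0$, which is precisely why absolute continuity is asserted only on intervals bounded away from the origin, consistent with $f,g$ possibly being discontinuous at $0$.
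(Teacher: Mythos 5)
Your proof is correct, and it reaches the crucial part \textbf{(a)} by a genuinely different route than the paper. The paper argues through the copula itself: since $C_{f,g}$ is $2$-increasing, for points in the stand one gets
\[
1\;\geqslant\;\frac{f(x_2)-f(x_1)}{x_2-x_1}\cdot\frac{g(y_2)-g(y_1)}{y_2-y_1},
\]
and then, exploiting the freedom to pair $f$ with an \emph{auxiliary} generator $g$ satisfying \textbf{(G1)}--\textbf{(G3)}, chooses points making the second quotient once negative and once positive; this traps the difference quotient of $f$ between two constants $\alpha$ and $\beta$, and absolute continuity is then quoted from Lemma 2.1 of \cite{RoLaUbFl}. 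You instead read the two slope bounds directly off the generator conditions: \textbf{(G2)} supplies the lower bound $-1$, and \textbf{(G3)}, via $u_1f(u_2)\leqslant u_2f(u_1)$, supplies the upper bound $f^*(u_1)\leqslant 1/\varepsilon$ (your unproved step $f\leqslant 1$ is harmless, since $f(u)\leqslant 1-u$ follows at once from \textbf{(G1)}--\textbf{(G2)}, and the paper records $f,g\colon[0,1]\to[0,1]$). This yields the explicit Lipschitz constant $1/\varepsilon$ on $[\varepsilon,1]$ --- a slightly stronger conclusion than stated --- without invoking the copula property, the auxiliary-generator trick, or the external lemma, and it makes transparent why the estimate must degenerate as $\varepsilon\downarrow 0$ (consistent with possible discontinuity of $f,g$ at $0$). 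For \textbf{(b)} and \textbf{(c)} the paper only says ``follows by (a)'' and ``an easy consequence''; your factorization of the volume as $\bigl(f(u_2)-f(u_1)\bigr)\bigl(g(v_2)-g(v_1)\bigr)=\int_{u_1}^{u_2}\!\int_{v_1}^{v_2}f'(s)g'(t)\,dt\,ds$ with bounded density, together with the absolutely continuous one-variable sections, supplies exactly the two ingredients of the two-variable notion of absolute continuity used in \cite[Section 7.3]{Loja}, so those parts are fine as well. In short: the paper's proof emphasizes the link to \cite{RoLaUbFl} and works even as a template for the non-product situations treated there, while yours is more self-contained and quantitative.
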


\begin{proof} \textbf{(a)}
By Lemma \ref{properties G} and the definition following it, $C_{f,g}$ is a copula and has the $2$-increasing property. It follows easily for any points $0\leqslant x_1<x_2\leqslant1$ and $0\leqslant y_1 < y_2 \leqslant 1$ with $f^*(x_1)g^*(y_1)<1$ (implying that $f^*(x_i)g^*(y_j) <1$ for all combinations of indices $i,j\in\{1,2\})$ that
\begin{equation}\label{quotient_product}
  1\geqslant\frac{f(x_2)-f(x_1)}{x_2-x_1}\cdot\frac{g(y_2)-g(y_1)}{y_2-y_1}.
\end{equation}
Here we may choose $x_1\geqslant\varepsilon$ with no loss. As a matter of fact, we may then choose $g$ satisfying conditions \textbf{(G1)}--\textbf{(G3)} in relations \eqref{quotient_product} in an arbitrary manner. In particular, we may choose $g$ non-trivial, so that it contains a point $y_0\in(0,1)$ at which its value is strictly positive, so that it is increasing at some points smaller than $y_0$ and decreasing at some points greater than $y_0$ by the fact that $g(0)=g(1)=0$. So, we may find the points $0\leqslant y_1 < y_2 \leqslant 1$ such that
\[
    \alpha^{-1}:=\frac{g(y_2)-g(y_1)}{y_2-y_1}<0,\ \ \ \mbox{respectively}\ \ \ \beta^{-1}:= \frac{g(y_2)-g(y_1)}{y_2-y_1}>0.
\]
(Observe that we may assume in the first case that $-1\leqslant \alpha^{-1}$ by property \textbf{(G2)}.) 
So, we are developing respectively relation \eqref{quotient_product} into the two relations given below:
\[
    \alpha\leqslant \frac{f(x_2)-f(x_1)}{x_2-x_1} \leqslant \beta,
\]
and $f$ is absolutely continuous by \cite[Lemma 2.1]{RoLaUbFl}. These arguments with the role of $f$ and $g$ reversed yield $g$ being absolutely continuous. \textbf{(b)}  follows by \textbf{(a)} and \textbf{(c)} is an easy consequence of \textbf{(b)}. 
\end{proof}

\begin{corollary}\label{cor:abs}
  An RMM copula $C$ is absolutely continuous on all rectangles $(a,1]\times(b,1]$ and $[0,a)\times[0,b)$ for $(a,b)\in L_0(C)$.
\end{corollary}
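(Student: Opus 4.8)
The plan is to reduce everything to Lemma~\ref{abs}(c) by showing that on each of the two rectangles the truncation in \eqref{inverse maxmin star} is inactive, so that $C$ either coincides with the genuinely regular expression $uv-f(u)g(v)$ or vanishes identically. First I would record that, since $L_0(C)\subseteq(0,1)^2$, the point $(a,b)$ has both coordinates strictly positive. Membership of $(a,b)$ in $\mathcal{Z}(C)=\{C=0\}$ gives $ab-f(a)g(b)\leqslant0$, i.e.\ $f^*(a)g^*(b)\geqslant1$, while membership in $\mathcal{S}(C)$, the closure of $\{f^*g^*<1\}$, together with the continuity of $f,g$ — hence of $f^*,g^*$ — at the interior point $(a,b)$, gives $f^*(a)g^*(b)\leqslant1$. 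Thus $f^*(a)g^*(b)=1$.

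For the rectangle $(a,1]\times(b,1]$ I would invoke property \textbf{(G3)}: $f^*$ and $g^*$ are nonincreasing on $(0,1]$, so $f^*(u)\leqslant f^*(a)$ and $g^*(v)\leqslant g^*(b)$ whenever $u\geqslant a$ and $v\geqslant b$, whence $f^*(u)g^*(v)\leqslant f^*(a)g^*(b)=1$ on all of $[a,1]\times[b,1]$. Consequently the maximum in \eqref{inverse maxmin star} is attained by its second argument and $C(u,v)=uv-f(u)g(v)$ throughout this rectangle, with no truncation. Since $a,b>0$, Lemma~\ref{abs}(c), applied with $\varepsilon=a$ and $\delta=b$, shows that $uv-f(u)g(v)$ is absolutely continuous on $[a,1]\times[b,1]$; as the copula mass on $(a,1]\times(b,1]$ is exactly the measure induced by this absolutely continuous function, $C$ is absolutely continuous there.

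For the rectangle $[0,a)\times[0,b)$ the same monotonicity runs the other way: for $u<a$ and $v<b$ one has $f^*(u)\geqslant f^*(a)$ and $g^*(v)\geqslant g^*(b)$, so $f^*(u)g^*(v)\geqslant f^*(a)g^*(b)=1$ and hence $C(u,v)=uv\max\{0,1-f^*(u)g^*(v)\}=0$; together with the boundary values $C(0,v)=C(u,0)=0$ this gives $C\equiv0$ on $[0,a)\times[0,b)$. A function that is identically zero on a rectangle induces the zero measure there, which is trivially absolutely continuous, so the claim holds on this rectangle as well.

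The only genuine subtlety is the bookkeeping that links the two notions of absolute continuity in play: Lemma~\ref{abs}(c) delivers absolute continuity of $uv-f(u)g(v)$ as a function of two variables in the sense of \cite[Section 7.3]{Loja}, and I would spell out that for a $2$-increasing function this is equivalent to absolute continuity, with respect to planar Lebesgue measure, of the induced doubly stochastic measure, so that the restriction of the copula mass to $(a,1]\times(b,1]$ indeed has a density. Everything else is a direct application of \textbf{(G3)} together with the defining inequalities of $\mathcal{S}(C)$ and $\mathcal{Z}(C)$.
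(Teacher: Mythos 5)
Your argument is, in substance, the paper's own proof written out in full: the paper disposes of the corollary in one sentence, citing Lemma \ref{abs} together with the fact that $C$ vanishes identically on $\mathcal{Z}(C)$, and your two rectangles supply exactly those two ingredients, with the useful explicit observation that $f^*(a)g^*(b)=1$ makes the truncation in \eqref{inverse maxmin star} inactive on the upper rectangle so that Lemma \ref{abs}(c) applies verbatim. Two remarks. For the lower rectangle you do not need the monotonicity of $f^*$ and $g^*$ at all: a copula is nondecreasing in each variable, so $0\leqslant C(u,v)\leqslant C(a,b)=0$ on $[0,a)\times[0,b)$, which is what the paper's reference to $\mathcal{Z}(C)$ amounts to. More importantly, your opening claim that $L_0(C)\subseteq(0,1)^2$ is not true under the paper's definition: when $\mathcal{S}(C)=[0,1]^2$ (e.g.\ for $\Pi$, or for the absolutely continuous copulas of Example \ref{regular}), the paper defines $L_0(C)=\{[t,0],[0,t]\,;\,t\in[0,1]\}$, so $(a,b)$ may lie on an axis; in that case your appeal to continuity of $f^*,g^*$ at $(a,b)$ and to Lemma \ref{abs}(c) with $\varepsilon=a$, $\delta=b$ breaks down. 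That branch is easily repaired --- there the set $\{f^*g^*<1\}$ is dense, so by continuity the truncation is inactive on all of $(0,1]^2$, Lemma \ref{abs}(c) gives absolute continuity on every $[\varepsilon,1]\times[\delta,1]$ with $\varepsilon,\delta>0$, and a countable exhaustion of $(a,1]\times(b,1]$ by such rectangles finishes the argument (the other rectangle being empty or handled as above) --- but as written your proof silently excludes this case, whereas the corollary as stated includes it.
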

\begin{proof}
  This follows immediately by Lemma \ref{abs} and the fact that $C$ is identically $0$ on $\mathcal{Z}(C)$.
\end{proof}

For a function $f$ defined on $[0,1]$ we introduce
\[
A_f=\{t\in[0,1]\,;\,f'(t)\  \mbox{exists}\}.
\]

\begin{lemma}\label{abs_der}
  Suppose that functions $f$ and $g$ satisfy conditions {\rm \textbf{(G1)}--\textbf{(G3)}}. Let $u\in A_f,v\in A_g$ be such that
\begin{description}
  \item[(a)] $f(u)g(v)\leqslant uv$, then $f'(u)g'(v)\leqslant 1$, respectively
  \item[(b)] $f(u)g(v)< uv$, and $f'(u)$ and $g'(v)$ are not both negative, then $f'(u)g'(v)< 1$.
\end{description}
In particular, $f'(u)g'(v)\leqslant 1$ for all  $(u,v)\in \mathcal{S}(C_{f,g}) \cap (A_f\times A_g)$.
\end{lemma}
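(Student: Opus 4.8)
The plan is to deduce the lemma from two elementary pointwise facts about the generators and then to run through the sign patterns of the pair $(f'(u),g'(v))$ one by one; no new two-dimensional estimate is needed. Throughout I work at interior points $u,v\in(0,1)$, where the hypothesis $f(u)g(v)\le uv$ is the same as $f^*(u)g^*(v)\le1$ (divide by $uv>0$); this equivalence is what makes the monotonicity of $f^*$ and $g^*$ usable. The two facts I will lean on are the upper bounds $f'(u)\le f^*(u)$ and $g'(v)\le g^*(v)$, and the lower bounds $f'(u)\ge-1$, $g'(v)\ge-1$ already furnished by Proposition~\ref{odvod}\textbf{(a)}.

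First I would prove $f'(u)\le f^*(u)$. Because $f$ is differentiable at the interior point $u$ and $u>0$, the quotient $f^*(t)=f(t)/t$ is differentiable at $u$ with $(f^*)'(u)=\frac{uf'(u)-f(u)}{u^2}$; since $f^*$ is nonincreasing by \textbf{(G3)}, this derivative is $\le0$, so $uf'(u)\le f(u)$ and hence $f'(u)\le f^*(u)$. The companion bound $g'(v)\le g^*(v)$ is identical.

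For part \textbf{(a)} I split on signs. If $f'(u)$ and $g'(v)$ have opposite signs or one of them vanishes, then $f'(u)g'(v)\le0<1$. If both are $\ge0$, I multiply $0\le f'(u)\le f^*(u)$ and $0\le g'(v)\le g^*(v)$ to obtain $f'(u)g'(v)\le f^*(u)g^*(v)\le1$, the last step being the hypothesis. If both are $<0$, I use instead $|f'(u)|\le1$ and $|g'(v)|\le1$, so that $0<f'(u)g'(v)=|f'(u)|\,|g'(v)|\le1$. Part \textbf{(b)} uses the same split: the case $f'(u)g'(v)\le0$ gives the strict bound immediately, and the only remaining case is both positive (they are not both negative), where the multiplication now reads $f'(u)g'(v)\le f^*(u)g^*(v)<1$, strictness coming from the strict hypothesis. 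The ``in particular'' clause then follows, since every $(u,v)\in\mathcal S(C_{f,g})$ satisfies $f^*(u)g^*(v)\le1$ by continuity of $f^*,g^*$ and the definition of $\mathcal S(C_{f,g})$ as a closure, so part \textbf{(a)} applies.

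The main thing to be careful about is the transfer of differentiability from $f$ to $f^*$, which is what converts the monotonicity of $f^*$ into the usable sign condition $f'(u)\le f^*(u)$; the remainder is a finite sign check. I would also watch the degenerate corner $u=0$: there $f(u)g(v)\le uv$ reads $0\le0$ and controls nothing, so the honest hypothesis is $f^*(u)g^*(v)\le1$, which is exactly what holds on $\mathcal S(C_{f,g})$ and is all the ``in particular'' statement requires. Finally, I would record why part \textbf{(b)} must bar the both-negative case: for $W$, where $f(t)=g(t)=1-t$ and hence $f'\equiv g'\equiv-1$, one has $f^*(u)g^*(v)<1$ on the region $u+v>1$ while $f'(u)g'(v)=1$ there, so the strict conclusion genuinely fails without that hypothesis.
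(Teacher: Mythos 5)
Your proof is correct and follows essentially the same route as the paper's: both rest on the key bounds $f'(u)\leqslant f^*(u)$ and $g'(v)\leqslant g^*(v)$ obtained from the monotonicity of $f^*$ and $g^*$, chained with $f^*(u)g^*(v)\leqslant 1$ when the derivatives are not both negative, and both fall back on the lower bounds $f',g'\geqslant -1$ of Proposition \ref{odvod}\textbf{(a)} in the both-negative case. Your write-up is simply more explicit than the paper's terse proof about the sign case split, the strictness in part \textbf{(b)}, and the degenerate corner $u=0$ (where, as you rightly note, the stated hypothesis $f(u)g(v)\leqslant uv$ is vacuous and the honest hypothesis is $f^*(u)g^*(v)\leqslant 1$).
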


\begin{proof}
  \textbf{(a)} Condition \textbf{(G2)} clearly implies that ${f^*}'(u) \leqslant0$, implying $f'(u)\leqslant f^*(u)$, and similarly for $g$, so that
\[
    f'(u)g'(v)\leqslant f^*(u)g^*(v)\leqslant1
\]
provided that the derivatives are not both negative. Otherwise the claim follows by Proposition \ref{odvod}\textbf{(a)}. Case \textbf{(b)} goes similarly.
\end{proof}

\begin{theorem}\label{thm:abs}
  Suppose that $C_{f,g}$ is an RMM copula. Then it is absolutely continuous in the interior of the stand of $\mathcal{S}(C_{f,g})$ and in the interior of the zero set $\mathcal{Z}(C_{f,g})$. Therefore, it may be singular only along the boundary $L_0(C_{f,g})$.
\end{theorem}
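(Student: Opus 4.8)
The plan is to localize, writing $[0,1]^2$ as the essentially disjoint union of the relatively open set $\mathrm{int}\,\mathcal{S}(C_{f,g})$, the relatively open set $\mathrm{int}\,\mathcal{Z}(C_{f,g})$, and their common boundary $L_0(C_{f,g})$, and then to exhibit an absolutely continuous density for the copula measure $\mu_C$ on each of the two open pieces. On $\mathrm{int}\,\mathcal{Z}(C_{f,g})$ there is nothing to prove: there $C_{f,g}\equiv0$, so every $C$-volume of a subrectangle vanishes and $\mu_C$ restricted to this set is the zero (hence absolutely continuous) measure. The work is therefore confined to $\mathrm{int}\,\mathcal{S}(C_{f,g})$, and the first observation is that on the stand the truncation in \eqref{inverse maxmin} is inactive: every point of $\mathcal{S}(C_{f,g})$ satisfies $uv\geqslant f(u)g(v)$, so that locally $C_{f,g}(u,v)=uv-f(u)g(v)$ with the maximum removed.

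Next I would reduce to Lemma \ref{abs}. Fix a point $(u_0,v_0)\in\mathrm{int}\,\mathcal{S}(C_{f,g})$ with $u_0>0$ and $v_0>0$, choose a small closed rectangle $R$ around it that still lies in the stand and inside $[\varepsilon,1]\times[\delta,1]$ for suitable $\varepsilon,\delta>0$, and invoke Lemma \ref{abs}\textbf{(c)}: the function $uv-f(u)g(v)$ is absolutely continuous on $[\varepsilon,1]\times[\delta,1]$, whence $C_{f,g}$ is absolutely continuous on $R$. Since $\mathrm{int}\,\mathcal{S}(C_{f,g})\cap\big((0,1]\times(0,1]\big)$ is relatively open in the second-countable space $(0,1]^2$, it is covered by countably many such rectangles, and patching the local densities yields an absolutely continuous density for $\mu_C$ on that set. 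Alternatively one may quote Corollary \ref{cor:abs} directly: because $f^*,g^*$ are nonincreasing by \textbf{(G3)}, the stand is an upper set, so every such interior point lies in some rectangle $(a,1]\times(b,1]$ with $(a,b)\in L_0(C_{f,g})$, and these rectangles cover the interior away from the axes.

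The main obstacle is precisely the requirement $\varepsilon,\delta>0$ in Lemma \ref{abs}: the generators $f,g$ need not be continuous at $0$, so the argument above says nothing about points of $\mathrm{int}\,\mathcal{S}(C_{f,g})$ lying on the axes $\{u=0\}$ or $\{v=0\}$, and such points do occur (for instance when $f^*(0)g^*(v_0)<1$). I would dispose of these by a mass argument rather than a regularity one: since $C_{f,g}$ is a copula with uniform margins and $g(1)=0$ by \textbf{(G1)}, the $\mu_C$-mass of the strip $[0,\epsilon]\times[0,1]$ equals $C_{f,g}(\epsilon,1)-C_{f,g}(0,1)=\epsilon\to0$, so the left edge is $\mu_C$-null, and symmetrically for the bottom edge; hence this exceptional set carries no singular mass (and is Lebesgue-null). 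Combining the two open pieces, $\mu_C$ is absolutely continuous on $\mathrm{int}\,\mathcal{S}(C_{f,g})\cup\mathrm{int}\,\mathcal{Z}(C_{f,g})$, so any singular component must be supported on the remaining set, namely the boundary $L_0(C_{f,g})$; the copulas of Example \ref{example:singular} confirm that this possibility is genuinely realized.
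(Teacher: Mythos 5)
Your proof is correct and takes essentially the same route as the paper's: both decompose $[0,1]^2$ into the two open regions and $L_0(C_{f,g})$, obtain absolute continuity on the interior of the stand from Lemma \ref{abs} and Corollary \ref{cor:abs}, note that the copula measure is trivial on the interior of the zero set, and conclude that any singular part must sit on $L_0(C_{f,g})$ (the paper additionally records the explicit density \eqref{density} and its nonnegativity via Lemma \ref{abs_der}, but that is needed mainly for the proposition that follows, not for this statement). The one point where you go beyond the paper is your strip-mass argument for interior points of the stand lying on the axes, where Lemma \ref{abs} is silent because $f$ and $g$ may be discontinuous at $0$; the paper's proof passes over this case, so your addition closes a small gap rather than constituting a different method.
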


\begin{proof}
  By Lemma \ref{abs} and Corollary \ref{cor:abs} the functions $f(u)g(v)$ and $C_{f,g}$ are absolutely continuous on all the rectangles in the interior of the stand $\mathcal{S}(C_{f,g})$, and also on all the rectangles in the interior of the zero set $\mathcal{Z}(C_{f,g})$. Therefore, the density function
\begin{equation}\label{density}
  c(u,v)=\left\{
             \begin{array}{ll}
               1-f'(u)g'(v), & \hbox{for $(u,v)\in\mathcal{S}(C_{f,g})$,} \\
               0, & \hbox{for $(u,v)\in\mathcal{Z}(C_{f,g})$,}
             \end{array}
           \right.
\end{equation}
is nonnegative by Lemma \ref{abs_der} and we have
\[
    \int_{a}^{c} \int_{b}^{d}c(u,v)\,du\,dv=C_{f,g}(c,d)-C_{f,g}(c,b)- C_{f,g}(a,d)+C_{f,g}(a,b)
\]
for any rectangle $[a,c]\times[b,d]$ in either the interior of $\mathcal{S}(C_{f,g})$ or of $\mathcal{Z}(C_{f,g})$. Consequently, the copula may be singular only on $L_0(C_{f,g})$.
\end{proof}

\begin{proposition}
  For an RMM copula $C_{f,g}$ the density of its absolutely continuous part is given by \eqref{density}. The mass of its singular part is given by
\[
    1-\iint\limits_\mathcal{S}\left(1-f'(u)g'(v)\right)\,du\,dv= \lambda(\mathcal{Z})+\iint\limits_\mathcal{S}f'(u)g'(v)\,du\,dv.
\]
\end{proposition}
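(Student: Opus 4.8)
The plan is to take the density statement directly from Theorem~\ref{thm:abs} and then to obtain the singular mass by subtracting the absolutely continuous mass from the total mass $1$. Theorem~\ref{thm:abs} already shows that $C_{f,g}$ is absolutely continuous on the interiors of both $\mathcal{S}(C_{f,g})$ and $\mathcal{Z}(C_{f,g})$ with Radon--Nikodym derivative \eqref{density}, and that any singular mass is supported on $L_0(C_{f,g})$. Hence the first assertion is immediate once we note that the common boundary $L_0(C_{f,g})$ is Lebesgue-null, so that \eqref{density} is genuinely the density of the absolutely continuous part.

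To see that $L_0(C_{f,g})$ is null I would use the monotonicity of $f^*$ and $g^*$ from \textbf{(G3)}: writing $\mathcal{S}(C_{f,g})=\overline{\{f^*(u)g^*(v)<1\}}$, each vertical section $\{v : f^*(u)g^*(v)<1\}$ is an interval whose left endpoint $v^*(u)$ is monotone in $u$; thus $L_0(C_{f,g})$ lies in the graph of a monotone function together with at most countably many vertical and horizontal segments, and is therefore Lebesgue-null. In particular $\lambda\bigl(\mathcal{S}\cap\mathcal{Z}\bigr)=0$, and since $\mathcal{S}\cup\mathcal{Z}=[0,1]^2$ we obtain $\lambda(\mathcal{S})+\lambda(\mathcal{Z})=1$.

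With the density pinned down, I would compute the mass of the absolutely continuous component by integrating \eqref{density} over $[0,1]^2$. Since $c\equiv0$ on $\mathcal{Z}(C_{f,g})$ and $L_0(C_{f,g})$ is null, this integral equals $\iint_{\mathcal{S}}\bigl(1-f'(u)g'(v)\bigr)\,du\,dv$; as the total mass of a copula is $1$, the singular mass is the complement $1-\iint_{\mathcal{S}}\bigl(1-f'(u)g'(v)\bigr)\,du\,dv$, which is the left-hand side of the claimed equality. Splitting the integrand and using $\iint_{\mathcal{S}}1\,du\,dv=\lambda(\mathcal{S})$ together with $\lambda(\mathcal{S})=1-\lambda(\mathcal{Z})$ then rewrites this as $\lambda(\mathcal{Z})+\iint_{\mathcal{S}}f'(u)g'(v)\,du\,dv$, which is the right-hand side.

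The algebra is routine; the only genuine point requiring care is the measure-theoretic bookkeeping, namely that all of the singular mass concentrates on the null set $L_0(C_{f,g})$ and contributes nothing to the area integrals. This is exactly the content of Theorem~\ref{thm:abs} combined with the nullity of $L_0(C_{f,g})$ established above, so I expect no further obstacle.
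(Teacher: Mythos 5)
Your proof is correct and takes essentially the paper's route: the paper states this proposition without a separate proof, presenting it as an immediate corollary of Theorem~\ref{thm:abs}, and your argument is precisely that deduction (density from \eqref{density}, singular mass as the complement of the absolutely continuous mass, then the algebraic rewriting via $\lambda(\mathcal{S})=1-\lambda(\mathcal{Z})$). The one detail you add --- that $L_0(C_{f,g})$ is Lebesgue-null because it lies on the closed graph of a monotone function together with countably many segments, so that $\lambda(\mathcal{S})+\lambda(\mathcal{Z})=1$ and neither the singular set nor the boundary contributes to the area integrals --- is exactly the bookkeeping the paper leaves implicit, and it is sound.
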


Here we write shorter $\mathcal{S}=\mathcal{S}(C_{f,g})$ and $\mathcal{Z}= \mathcal{Z}(C_{f,g})$. As before $\lambda$ denotes the Lebesque measure.

\begin{proposition}\label{W}
  The only fully singular RMM copula is the Fr\'{e}chet-Hoeffding lower bound $W$.
\end{proposition}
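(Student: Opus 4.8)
The plan is to turn the phrase ``fully singular'' into a pointwise identity for the derivatives $f'$ and $g'$, and then to use the rigidity built into conditions \textbf{(G1)}--\textbf{(G3)} to force $f(t)=g(t)=1-t$. First I would invoke the preceding Proposition: the mass of the absolutely continuous part of $C_{f,g}$ is $\iint_{\mathcal S}\bigl(1-f'(u)g'(v)\bigr)\,du\,dv$, and by Lemma \ref{abs_der} the integrand is nonnegative throughout $\mathcal S$. Since $C_{f,g}$ being fully singular means exactly that this absolutely continuous mass vanishes, the nonnegative integrand must be zero almost everywhere; that is, $f'(u)g'(v)=1$ for a.e.\ $(u,v)\in\mathcal{S}$.

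Next I would exploit the product form of this identity. Because $g(v)\to 0$ as $v\to 1$ while $f$ stays bounded, for every compact $[u_1,u_2]\subset(0,1)$ the rectangle $[u_1,u_2]\times(1-\eta,1)$ lies in $\mathcal S$ once $\eta$ is small. On such a rectangle $f'(u)g'(v)=1$ a.e., so a Fubini argument gives, for a.e.\ fixed $v_0$ near $1$, that $g'(v_0)\neq 0$ and $f'(u)=1/g'(v_0)$ for a.e.\ $u$; hence $f'$ is a.e.\ equal to a constant on $(u_1,u_2)$, and symmetrically $g'$ is a.e.\ constant near $1$. Since $f$ and $g$ are absolutely continuous on every interval $[\varepsilon,1]$ by Lemma \ref{abs}, this upgrades to $f$ being affine on each compact subinterval of $(0,1)$ and $g$ affine near $1$, with reciprocal slopes. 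Starting from a single rectangle over, say, $[\tfrac14,\tfrac34]$ fixes the near-$1$ slope of $g$ as one constant $\gamma$, and chaining overlapping rectangles then shows $f'\equiv 1/\gamma$ on all of $(0,1)$.

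The rigidity step is where the conclusion crystallizes. Using $f(1)=0$ together with $0\leqslant f(u)\leqslant 1-u$ (the upper bound because $\widehat f=\phi$ is nondecreasing with $\phi(1)=1$) and $f'\geqslant -1$ from Proposition \ref{odvod}\textbf{(a)}, the affine function $f(u)=\tfrac1\gamma(u-1)$ is forced to have slope $1/\gamma\in[-1,0]$; the same bounds applied to $g$ give $\gamma\in[-1,0]$. Two numbers of $[-1,0]$ can have product $1$ only if both equal $-1$, so $\gamma=-1$, whence $f'=-1$ a.e.\ on $(0,1)$ and, integrating from $1$, $f(t)=1-t$ for all $t\in(0,1]$; symmetrically $g(t)=1-t$, while $f(0)=g(0)=0$ by \textbf{(G1)}. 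Substituting back, $C_{f,g}(u,v)=\max\{0,uv-(1-u)(1-v)\}=\max\{0,u+v-1\}=W(u,v)$, as required.

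The step I expect to be the main obstacle is the middle one: extracting from the single scalar identity $f'g'=1$ on a two-dimensional set that each of $f$ and $g$ is genuinely affine, rather than merely of constant slope on a null set. This requires the absolute continuity supplied by Lemma \ref{abs}, a careful Fubini passage from the product condition to constancy of each factor, and the bookkeeping needed to check that the slope is one and the same constant across all of $(0,1)$ instead of drifting from rectangle to rectangle. Once that is secured, the sign and boundedness constraints close the argument almost mechanically.
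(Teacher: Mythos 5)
Your proof is correct, and it reaches the paper's conclusion along a partly different route. Both arguments share the same opening and closing moves: full singularity forces $f'(u)g'(v)=1$ for a.e.\ $(u,v)$ in the stand $\mathcal{S}$ (with nonnegativity of the density supplied by Lemma \ref{abs_der}), and at the end both exploit $f',g'\geqslant -1$ from Proposition \ref{odvod} together with the observation that two slopes in $[-1,0)$ whose product is $1$ must both equal $-1$. The middle is where you diverge. The paper never establishes affineness: it runs a sign argument, showing that $f'<0$ and $g'<0$ almost everywhere on $\mathcal{S}$ by contradiction, using an exchange property of the stand (if $(u_1,v_1),(u_2,v_2)\in\mathcal{S}$ then $(u_1,v_2)\in\mathcal{S}$ or $(u_2,v_1)\in\mathcal{S}$, since $f^*(u_1)g^*(v_2)\cdot f^*(u_2)g^*(v_1)= f^*(u_1)g^*(v_1)\cdot f^*(u_2)g^*(v_2)\leqslant 1$), and then concludes $f'=g'=-1$ a.e.\ on $\mathcal{S}$ directly from the equality case, extending to $[0,1]^2$ in one terse sentence. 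You instead verify that the rectangles $[u_1,u_2]\times(1-\eta,1)$ lie in $\mathcal{S}$ (which is right: $f^*\leqslant (1-u_1)/u_1$ on $[u_1,u_2]$ because $\widehat f\leqslant 1$, and $g^*(v)\to g^*(1)=0$ by continuity of $g$ on $(0,1]$), run Fubini to make $f'$ a.e.\ constant there, use the absolute continuity of Lemma \ref{abs} to upgrade to affineness with reciprocal slopes, and chain overlapping rectangles to get a single global slope. Your version costs more measure-theoretic bookkeeping but buys a stronger intermediate statement (both generators are affine) and makes fully explicit the passage from information a.e.\ on $\mathcal{S}$ to the identity $f(t)=g(t)=1-t$ on all of $(0,1]$ --- precisely the step the paper compresses into ``and consequently for almost all $(u,v)\in[0,1]^2$''. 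The paper's version, in turn, is shorter and needs neither the Fubini slicing nor the chaining, since the sign dichotomy plus the equality case does all the work at once.
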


\begin{proof}
  Let $C=C_{f,g}$ be a fully singular RMM copula. So, the mass of its singular part is equal to 1 and by \eqref{density} we have
\begin{equation}\label{one}
  f'(u)g'(v)=1
\end{equation}
for almost all $(u,v)\in\mathcal{S}$. (Here we have implicitly used the fact that by Lemma \ref{abs} functions $f,g$ are absolutely continuous on the interval $(0,1]$ and therefore their derivatives exist almost everywhere.)

Now, if $f'(u)\geqslant0$ or $g'(v)\geqslant0$ for almost all $(u,v)\in \mathcal{S}$, then $f'(u)=g'(v)=0$ (since by \textbf{(G1)} $f(1)=g(1)=0$) contradicting \eqref{one}. So, there exist $u,v\in(0,1)$ such that $f'(u)<0$ and $g'(v)<0$ implying that the set
\[
    \widetilde{\mathcal{S}}=\{(u,v)\in\mathcal{S}\,;\,f'(u)<0\ \mbox{and}\ g'(v)<0\}
\]
is nonempty. We want to show that $\widetilde{\mathcal{S}}$ contains almost all of $\mathcal{S}$. Towards a contradiction, assume this is not the case; then there is a subset of positive Lebesgue measure contained in ${\mathcal{S}}\setminus \widetilde{\mathcal{S}}$. Using \eqref{one} as above we choose a point $(u_1,v_1)$ in this set such that $f'(u_1)>0$ and $g'(v_1)>0$. In addition choose $(u_2,v_2)\in\widetilde{\mathcal{S}}$ and observe that either $(u_1,v_2)\in {\mathcal{S}}$ or $(u_2,v_1)\in {\mathcal{S}}$. Because of $f'(u_1)g'(v_2)<0$ and $f'(u_2)g'(v_1)<0$ this is contradicting \eqref{one}.

Finally, Proposition \ref{odvod} together with \eqref{odvod} tells us that $f'(u)=-1$ and $g'(v)=-1$ for almost all $(u,v)\in \mathcal{S}$ and consequently for almost all $(u,v)\in[0,1]^2$. So,
\[
f(u)=\left\{
       \begin{array}{ll}
         0, & \hbox{$u=0$;} \\
         1-u, & \hbox{$u>0$;}
       \end{array}
     \right.\ \ \ \mbox{and}\ \ \ g(v)=\left\{
       \begin{array}{ll}
         0, & \hbox{$v=0$;} \\
         1-v, & \hbox{$v>0$;}
       \end{array}
     \right.
\]
yielding the desired conclusion.
\end{proof}



The above results on RMM copulas imply the following result for the maxmin copulas.

\begin{corollary}
  Let $C_{\phi,\psi}$ be a maxmin copula. Then the singular component of $C$ is included in the curve $\phi^*(u)=\psi_*(v)$. The only fully singular maxmin copula is the Fr\'{e}chet-Hoeffding upper bound $M$.
\end{corollary}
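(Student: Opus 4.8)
The plan is to transfer everything through the reflection $\sigma$ that ties each maxmin copula to an RMM copula. By Theorem \ref{copula}, writing $f(u)=\phi(u)-u$ and $g(v)=\widehat{\psi}(v)-v$, we have $C_{\phi,\psi}(u,v)=u-C_{f,g}(u,1-v)$; that is, $C_{\phi,\psi}$ is the image of the RMM copula $C_{f,g}$ under the flip $T\colon(u,v)\mapsto(u,1-v)$ in the second coordinate. The first thing I would record is that $T$ is an affine bijection of $[0,1]^2$ with Jacobian determinant $-1$, hence measure preserving, so that the doubly stochastic measure of $C_{\phi,\psi}$ is the pushforward under $T$ of the measure of $C_{f,g}$. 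Since a measure-preserving bijection carries null sets to null sets, it respects the Lebesgue decomposition: it sends the absolutely continuous part of $C_{f,g}$ to the absolutely continuous part of $C_{\phi,\psi}$ and the singular part to the singular part. In particular the support of the singular part of $C_{\phi,\psi}$ is the image under $v\mapsto1-v$ of the support of the singular part of $C_{f,g}$.

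Next I would invoke Theorem \ref{thm:abs}: the singular part of $C_{f,g}$ is concentrated on the boundary $L_0(C_{f,g})$, which by the description of the stand $\mathcal{S}(C_{f,g})$ is exactly the curve $f^*(u)g^*(v)=1$. It then remains to rewrite this curve in the coordinates of the maxmin copula after the flip. Here I would reuse the two identities established inside the proof of Lemma \ref{properties F}, namely $\phi^*(u)=f^*(u)+1$ and $\psi_*(w)-1=\dfrac{1}{g^*(1-w)}$. Substituting $v=1-w$, the equation $f^*(u)g^*(v)=1$ becomes $(\phi^*(u)-1)\,g^*(1-w)=1$, and since $g^*(1-w)=1/(\psi_*(w)-1)$ this is precisely $\phi^*(u)-1=\psi_*(w)-1$, i.e.\ $\phi^*(u)=\psi_*(w)$. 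Renaming $w$ back to $v$ shows that the singular component of $C_{\phi,\psi}$ lies on the curve $\phi^*(u)=\psi_*(v)$, which is the first assertion.

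For the second claim I would argue that being fully singular is preserved by the flip, again because $T$ is measure preserving: a maxmin copula is fully singular if and only if its associated RMM copula $C_{f,g}$ is fully singular. By Proposition \ref{W} the only fully singular RMM copula is $W$, so the only candidate is $\sigma(W)$. A one-line computation identifies this image, since $u-W(u,1-v)=u-\max\{0,u-v\}=\min\{u,v\}=M(u,v)$; hence $M$ is the unique fully singular maxmin copula.

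The routine parts—the Jacobian computation and the identity $u-W(u,1-v)=M$—are immediate, and the algebra relating $(\phi^*,\psi_*)$ to $(f^*,g^*)$ is already available from Lemma \ref{properties F}. The only point demanding genuine care is the very first step: justifying that $T$ commutes with the Lebesgue decomposition, so that the singular support of $C_{\phi,\psi}$ is literally the $v\mapsto1-v$ image of the singular support of $C_{f,g}$. Once that measure-theoretic fact is in hand, the remainder is pure bookkeeping.
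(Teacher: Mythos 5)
Your proposal is correct and takes essentially the same approach as the paper's own proof: the paper likewise uses Theorem \ref{thm:abs} together with the ``short computation'' translating the curve $f^*(u)g^*(v)=1$ into $\phi^*(u)=\psi_*(v)$, and then combines Proposition \ref{W} with the observation that flipping $W$ yields $M$. The only difference is expository: you make explicit the measure-theoretic fact that the flip preserves the Lebesgue decomposition and carry out the algebra and the identity $u-W(u,1-v)=M(u,v)$, all of which the paper leaves implicit.
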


\begin{proof}
  A short computation reveals that the curve defined by $f^*(u)g^*(v)=1$ corresponds, when translated back to maxmin copulas, to the curve $\phi^*(u)=\psi_*(v)$. Also, when we perform one flip on the Fr\'{e}chet-Hoeffding lower bound $W$, we get the Fr\'{e}chet-Hoeffding upper bound $M$ and the corollary follows by Proposition \ref{W}.
\end{proof}

\textbf{Remark.} It turns out that two pairs of generators $f_1,g_1$ and $f_2,g_2$ that generate the same RMM copula $C_{f_1,g_1}=C_{f_2,g_2}$ (distinct from the product copula) differ only by a multiplicative constant: $f_2=\lambda f_1$ and $g_2=\lambda^{-1} g_1$ for some $\lambda>0$. In particular, the generator $f$ of an SRMM copula $C_f$ is uniquely determined by the copula.

We will omit the proof of these facts not to lengthen this paper too much.

\section{ Statistical aspects of RMM and SRMM copulas }\label{sec:stat}

In this section we need the difference
\begin{equation*}
    \Delta_C(u,v)=\Pi(u,v)-C(u,v)
\end{equation*}
and the corresponding quotient of differences
\[
    Q_C(u_1,v_1;u_2,v_2)=\frac{\Delta_C(u_1,v_1)}{\Delta_C(u_2,v_2)}
\]
for a given RMM copula $C$. Note that difference $\Delta_C$ is always nonnegative and measures the ``dependent'' part of $C$. In particular, it is identically equal to zero if and only if $C\equiv\Pi$ and $C$ models an independence. Of course, the quotient is well-defined only if the denominator is non-zero. 

Given an SRMM copula $C(u,v)$ we want to find the generator $f(u)$ or equivalently $f^*(u)$  in closed form, at least for $u$ for which $f(u)$ is nonzero, to be defined precisely in a moment. (The question of finding one of the two generators of an RMM copula is of the same complexity, as we believe.) Given a continuous nonnegative function $f$ on the interval $[0,1]$ such that \textbf{(G1)}--\textbf{(G3)} hold, the SRMM copula generated by $f$ is defined as
\begin{equation}\label{eq:inverse}
    C_f(u,v)=\max\{0,uv-f(u)f(v)\}.
\end{equation}
Recall the Definition \eqref{eq:new} of $f^*(0)$. For $U\in(0,\min\{1, f^*(0)\}]$ we denote
\[
    {f^*}^{-1}(U):=\inf\{u\in\mathds{R}\,;\,f^*(u)=U\}.
\]
Note that this infimum is actually a minimum by continuity of $f$ and consequently of $f^*$. Now, if $u,v\in(0,1]$ are large enough we get $f^*(u)f^*(v)<1$ by the fact that $f^*$ is nonincreasing and the diagonal $\delta_f(u):=C_f(u,u)$ of the copula defined by \eqref{eq:inverse} is positive for $u$ large enough. Denote $u_{\mathrm{min}}={f^*}^{-1}(\min\{1, f^*(0)\})$ and observe that for all $u,v \geqslant u_{\mathrm{min}}$ we have that $\delta_f(u)$ 
is nonnegative (or equivalently $f^*(v)f^*(u)\leqslant1$) since $f^*$ is nonincreasing. For the same reason it holds for every $x\geqslant1$ that even more $f^*(xv) f^*(u) \leqslant1$. So, the value of the expression
\[
    \frac{f^*(xu)}{f^*(u)}=\frac{1}{x} Q_C(xu,v\,;u,v)
\]
exists as a quotient of two strictly positive numbers and is independent of $v$. Replace in this equation first $u$ by $u_{\mathrm{min}}$ to get
\[
    \frac{f^*(xu_{\mathrm{min}})}{f^*(u_{\mathrm{min}})}=\frac{1}{x} Q_C(xu_{\mathrm{min}},v\,;u_{\mathrm{min}},v).
\]
Then, replace $xu_{\mathrm{min}}$ by an arbitrary $u\geqslant u_{\mathrm{min}}$ to get the formula (after recalling from the definition of $u_{\mathrm{min}}$ that $f^*(u_{\mathrm{min}})=1$ and consequently $f(u_{\mathrm{min}})=u_{\mathrm{min}}$)
\begin{equation}\label{generating}
    f(u) =u_{\mathrm{min}} Q_C(u,v\,;u_{\mathrm{min}},v).
\end{equation}
Here, $v$ is arbitrary but chosen so that both differences in the numerator and the denominator of this quotient are positive. Observe that the quotient is independent of $v$.

\begin{theorem}\label{thm:closedform}
  An RMM copula $C$, generated by generators of two SRMM copulas $C_1$ and $C_2$, is given in closed form by
\[
    C(u,v)=\max\left\{0,uv-u_{\mathrm{min}} Q_{C_1}(u,w_1\,;u_{\mathrm{min}},w_1)
    v_{\mathrm{min}} Q_{C_2}(v,w_2\,;v_{\mathrm{min}},w_2)
    \right\}.
\]
\end{theorem}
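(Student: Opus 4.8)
The plan is to recognize that the asserted identity is, at heart, a direct substitution into the definition \eqref{inverse maxmin} of an RMM copula, once each of the two generators has been rewritten through the closed-form expression \eqref{generating} that was just established for symmetric reflected maxmin copulas. First I would unwind the hypothesis: saying that $C$ is generated by the generators of two SRMM copulas $C_1$ and $C_2$ means precisely that $C=C_{f,g}$ with $C_1=C_f$ and $C_2=C_g$, where $f$ and $g$ are functions satisfying \textbf{(G1)}--\textbf{(G3)}. Hence by \eqref{inverse maxmin} we have $C(u,v)=\max\{0,uv-f(u)g(v)\}$, and it remains only to replace $f(u)$ and $g(v)$ by quantities attached to $C_1$ and $C_2$.

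Next I would apply, to each generator separately, the computation that produced \eqref{generating}. For the SRMM copula $C_1=C_f$ one has, on its stand, $\Delta_{C_1}(u,w)=uw-C_1(u,w)=f(u)f(w)$ by \eqref{eq:inverse}; therefore for a value $w_1$ chosen so that both $(u,w_1)$ and $(u_{\mathrm{min}},w_1)$ lie in the stand of $C_1$, the quotient of differences telescopes to $Q_{C_1}(u,w_1;u_{\mathrm{min}},w_1)=f(u)f(w_1)/(f(u_{\mathrm{min}})f(w_1))=f(u)/u_{\mathrm{min}}$, where I use $f(u_{\mathrm{min}})=u_{\mathrm{min}}$. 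This is exactly \eqref{generating}, so $f(u)=u_{\mathrm{min}}Q_{C_1}(u,w_1;u_{\mathrm{min}},w_1)$. Running the identical argument for $C_2=C_g$ gives $g(v)=v_{\mathrm{min}}Q_{C_2}(v,w_2;v_{\mathrm{min}},w_2)$. Inserting these two expressions into $C(u,v)=\max\{0,uv-f(u)g(v)\}$ then yields the displayed closed form verbatim, which completes the argument.

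The one step that requires genuine care, and the main obstacle, is the admissibility of the auxiliary points $w_1$ and $w_2$: the telescoping above is valid only when the numerator and denominator of each quotient are strictly positive and the points involved sit in the respective stands, so that $\Delta_{C_i}$ equals the product of generator values rather than being cut off by the truncation. Because $f^*$ and $g^*$ are nonincreasing and vanish at $1$ by \textbf{(G1)}, I can always push $w_1,w_2$ toward $1$ to force $f^*(u)f^*(w_1)\leqslant1$ and $g^*(v)g^*(w_2)\leqslant1$, thereby securing membership in the stands; keeping the denominators positive then requires $f(w_1),g(w_2)>0$, which holds after the same adjustment except in the degenerate situation where a generator vanishes on an entire interval reaching $1$.

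On that exceptional region the relevant generator value is $0$, so $C(u,v)=uv$ and the formula holds trivially, and I would dispose of this case by hand. I expect no difficulty in the main chain of equalities itself; the bookkeeping of these domains of validity is the only delicate ingredient, and it is controlled entirely by the monotonicity of $f^*$ and $g^*$ from \textbf{(G3)} together with their boundary behaviour in \textbf{(G1)}.
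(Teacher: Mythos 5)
Your proposal is correct and follows essentially the same route as the paper: the paper's proof simply applies formula \eqref{generating} twice---once to each SRMM generator, with the substitutions $v\mapsto w_1$ in the first factor and $(u,u_{\mathrm{min}},v)\mapsto(v,v_{\mathrm{min}},w_2)$ in the second---and inserts the results into the definition \eqref{inverse maxmin}. Your additional work (re-deriving \eqref{generating} by telescoping $\Delta_{C_i}$ on the stand, and the bookkeeping of admissible choices of $w_1,w_2$, including the degenerate case of a vanishing generator) corresponds to what the paper only dispatches in the remark immediately after the theorem, where it is asserted without proof that the quotients can always be made well defined.
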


  \prf
In order to get this formula we use formula \eqref{generating} twice. In the first factor we replace $v$ by $w_1$, while in the second one we replace $u_{\mathrm{min}}$ by $v_{\mathrm{min}}$, $u$ by $v$, and $v$ by $w_2$.
  \qed\\

Observe that this formula is independent of the choice of $w_1$ and $w_2$ provided that in both quotients the numerators and denominators are nonzero (and, of course we can always choose them so.)

\begin{theorem}
    Suppose that $C$ is an RMM copula, and $F$ and $G$ are univariate d.f.'s. Furthermore, let $H(x,y)=C(F(x),G(y))$ be a bivariate d.f. Then there exist three independent random variables $Z_1$, $Z_2$ and $Z_3$ such that $H$ is the joint distribution function of the random pair $(X_1,Y_2)$, where the d.f.\ of $Y_2$ is the survival d.f.\ corresponding to $X_2$, and
    \[
        X_1=\max\{Z_1,Z_3\}\ \ \mbox{and}\ \ X_2=\min\{Z_2,Z_3\}.
    \]
\end{theorem}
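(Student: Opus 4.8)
The plan is to read the statement as the composition of two facts already established: the reflection identity of Theorem \ref{copula}, which exhibits $C$ as the flip of a genuine maxmin copula, and the shock-model representation of maxmin copulas developed in \cite{OmRu}. Write $C=C_{f,g}$ and put $\phi(u)=u+f(u)$ and $\psi(v)=v-g(1-v)$. By Theorem \ref{copula} the function $C_{\phi,\psi}$ is a maxmin copula and
\[
    C(u,v)=C_{f,g}(u,v)=u-C_{\phi,\psi}(u,1-v)=C^\sigma_{\phi,\psi}(u,v),
\]
so $C$ is precisely the reflection of $C_{\phi,\psi}$ in its second coordinate.

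First I would invoke the maxmin construction of \cite{OmRu}: because $C_{\phi,\psi}$ is a maxmin copula, there exist three independent random variables $Z_1,Z_2,Z_3$ such that, setting $X_1=\max\{Z_1,Z_3\}$ and $X_2=\min\{Z_2,Z_3\}$, the copula of $(X_1,X_2)$ equals $C_{\phi,\psi}$. Here I would use that the shock distributions carry enough freedom to fix the marginals separately from the dependence: the generators $\phi,\psi$ encode only the copula, while the laws of $Z_1,Z_2,Z_3$ may be tuned so that, in addition, $X_1$ has the prescribed d.f.\ $F$ and $X_2$ has the continuous d.f.\ $F_2$ whose survival function is $G$. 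This is the standard feature that the maxmin model inherits from the Marshall and Marshall--Olkin constructions.

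Next I would transport the reflection to the level of random variables. Let $Y_2=\beta(X_2)$ with $\beta$ strictly decreasing be the reflected variable, chosen so that $Y_2$ has d.f.\ $G$; since $F_2$ was arranged to have survival function $G$, the d.f.\ of $Y_2$ is exactly the survival d.f.\ corresponding to $X_2$, as the statement requires (a universal choice realizing this is $Y_2=G^{-1}(1-F_2(X_2))$). Because $\beta$ is strictly decreasing, \cite[Theorem 2.4.4]{Nels}, applied exactly as in the passage above that defines $C^\sigma$, shows that the copula of $(X_1,Y_2)$ equals $u-C_{\phi,\psi}(u,1-v)=C^\sigma_{\phi,\psi}(u,v)=C(u,v)$. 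Since $X_1\sim F$ and $Y_2\sim G$, Sklar's theorem gives that the joint d.f.\ of $(X_1,Y_2)$ is $C(F(x),G(y))=H(x,y)$, which is the assertion.

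The main obstacle is the marginal bookkeeping, not the dependence structure, which is delivered mechanically by Theorem \ref{copula} together with \cite{OmRu}. What must be checked with care is that the laws of $Z_1,Z_2,Z_3$ can be selected so that $X_1\sim F$ while at the same time the marginal of $X_2$ has survival function $G$, and that the copula of $(X_1,X_2)$ is unaffected by these choices; the freedom in the shocks, mirrored in the non-uniqueness of the generators noted in the Remark above, is what guarantees this is always possible. A secondary point is to confirm that ``the survival d.f.\ corresponding to $X_2$'' is precisely the law produced by the strictly decreasing reflection $\beta$, so that the second-coordinate flip of the copula corresponds exactly to passing from $X_2$ to $Y_2$.
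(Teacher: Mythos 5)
Your proof is correct and takes essentially the same route as the paper, whose entire argument is the one-line remark that the theorem ``follows directly from our definition and \cite[Lemma 7]{OmRu}'': you combine the reflection identity of Theorem \ref{copula} with the shock-model representation of maxmin copulas from \cite{OmRu}, and carry the flip to the level of random variables by a strictly decreasing transformation of $X_2$. The details you add --- the explicit choice $Y_2=G^{-1}\bigl(1-F_2(X_2)\bigr)$, the appeal to \cite[Theorem 2.4.4]{Nels}, and the final application of Sklar's theorem --- are precisely what the paper leaves implicit.
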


This follows directly from our definition and \cite[Lemma 7]{OmRu}. We now want to combine all the results of this section together with Formula \eqref{generating} for inverted marginal d.f. Observe in particular that when $g(v)=f(u)$ with $v=1-u$ the point $v_{\mathrm{min}}$ translates into $u_{\mathrm{max}}$.

\begin{theorem}
    Suppose that $C$ is a maxmiin copula, and $F$ and $G$ are univariate d.f.'s. Furthermore, let $H(x,y)=C(F(x),G(y))$ be a bivariate d.f. Then  there exist three independent random variables $Z_1$, $Z_2$ and $Z_3$ such that $H$ is the joint distribution function of random pair $(X_1,X_2)$, where
    \[
        X_1=\max\{Z_1,Z_3\}\ \ \mbox{and}\ \ X_2=\min\{Z_2,Z_3\}.
    \]
    Moreover, there exist two SRMM copulas $C_1$ and $C_2$ such that
    \begin{eqnarray*}
    C(u,v) &=& u- \max\{0,u(1-v) \\
    &-& u_{\mathrm{min}} Q_{C_1}(u,w_1\,;u_{\mathrm{min}},w_1)
        (1-v_{\mathrm{max}}) Q_{C_2}(1-v,w_2\,;1-v_{\mathrm{max}},w_2)
        \}.
    \end{eqnarray*}
\end{theorem}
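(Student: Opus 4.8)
The plan is to deduce the maxmin statement from the RMM results already established, using the single flip in the second variable that links the two classes. Given the maxmin copula $C=C_{\phi,\psi}$, I would first pass to its associated RMM copula $C_{f,g}$ with $f(u)=\phi(u)-u$ and $g(v)=1-v-\psi(1-v)$; by Lemma \ref{properties G} these satisfy \textbf{(G1)}--\textbf{(G3)}, and Theorem \ref{copula} gives the identity
\[
    C_{\phi,\psi}(u,v)=u-C_{f,g}(u,1-v).
\]
The functions $f$ and $g$ are exactly the generators of two SRMM copulas $C_1=C_f$ and $C_2=C_g$, so these serve as the $C_1,C_2$ of the statement.

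For the probabilistic representation I would invoke the defining model of maxmin copulas from \cite{OmRu}: the pair $X_1=\max\{Z_1,Z_3\}$, $X_2=\min\{Z_2,Z_3\}$ with independent $Z_1,Z_2,Z_3$ produces exactly a maxmin copula, and conversely every maxmin copula arises this way. Equivalently, one may start from the preceding theorem applied to $C_{f,g}$: it yields a pair $(X_1,Y_2)$ in which $Y_2$ carries the survival d.f.\ of $X_2$, and performing the flip in the second variable (which, by Theorem \ref{copula}, returns $C_{f,g}$ to $C_{\phi,\psi}$) replaces the reflected margin $Y_2$ by $X_2$ itself, giving the asserted joint law of $(X_1,X_2)$.

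For the closed form I would apply Theorem \ref{thm:closedform} to $C_{f,g}$, writing
\[
    C_{f,g}(u,v)=\max\{0,\,uv-u_{\mathrm{min}}Q_{C_1}(u,w_1;u_{\mathrm{min}},w_1)\,v_{\mathrm{min}}Q_{C_2}(v,w_2;v_{\mathrm{min}},w_2)\},
\]
where, by formula \eqref{generating}, the two factors equal $f(u)$ and $g(v)$ respectively. Substituting $1-v$ for $v$ and inserting the result into the flip identity above reproduces the displayed formula, the only point requiring care being the second factor $g(1-v)$. Re-expressing it through formula \eqref{generating} for $C_2$ at the argument $1-v$ gives
\[
    g(1-v)=(1-v_{\mathrm{max}})\,Q_{C_2}(1-v,w_2;1-v_{\mathrm{max}},w_2),
\]
once we set $1-v_{\mathrm{max}}$ to be the native index point $v_{\mathrm{min}}$ of $C_2$, i.e.\ $v_{\mathrm{max}}=1-v_{\mathrm{min}}$.

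The main obstacle is precisely this translation of the index point under the reflection. Because $g^*$ is nonincreasing, its smallest point of value $1$, which defines the index $v_{\mathrm{min}}$ of $C_2$, corresponds under the change of variable $v\mapsto 1-v$ to the largest admissible point in the maxmin variable, which is why it is natural to record it as $v_{\mathrm{max}}=1-v_{\mathrm{min}}$ (the phenomenon flagged in the remark preceding the theorem). I would verify that with this identification the numerator and denominator of $Q_{C_2}(1-v,w_2;1-v_{\mathrm{max}},w_2)$ stay strictly positive for an appropriate choice of $w_2$, so that the quotient is well defined and, as in Theorem \ref{thm:closedform}, independent of $w_2$. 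After this bookkeeping the maxmin closed form follows by direct substitution.
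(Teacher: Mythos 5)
Your proposal is correct and matches the paper's intended argument: the paper states this theorem without a separate proof precisely because it follows, exactly as you argue, by combining the flip identity of Theorem \ref{copula}, the shock representation from \cite{OmRu} (or equivalently the preceding theorem for RMM copulas), and the closed form of Theorem \ref{thm:closedform} via formula \eqref{generating}. Your treatment of the index-point translation $v_{\mathrm{max}}=1-v_{\mathrm{min}}$ is the same bookkeeping the paper flags in the remark immediately before the theorem statement.
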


\section{ Diagonals of symmetric reflected maxmin copulas }\label{sec:diag}

One wants to determine which diagonal functions $\delta$ that satisfy $\delta(t)\leqslant t^2$ on $[0,1]$ are possible diagonal sections of RMM copulas. We say that a function $\delta:[0,1]\rightarrow[0,1]$ is a \emph{diagonal function} if it satisfies the following conditions:
\begin{description}
  \item[(D1)] $\delta(0)=0,\delta(1)=1$,
  \item[(D2)] $\delta(t)\leqslant t$ for all $t\in[0,1]$,
  \item[(D3)] $\delta$ is nondecreasing,
  \item[(D4)] $\delta$ is 2-Lipshitz: $|\delta(s)-\delta(t)|\leqslant 2|s-t|$ for all $s,t\in[0,1]$.
\end{description}
A function $\delta:[0,1]\rightarrow[0,1]$ is a diagonal function if and only if it is the diagonal section of a copula \cite[pp.\ 84-85]{Nels}. We denote the set of all diagonal functions by $\mathcal{D}$.

\begin{proposition}\label{prop:delta1}
Suppose $\delta$ is a diagonal section of an RMM copula $C_{f,g}$. Then $\displaystyle \delta^\#(t) =\frac{\delta(t)}{t^2}:(0,1]\rightarrow[0,1]$ is nondecreasing.
\end{proposition}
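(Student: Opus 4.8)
The plan is to reduce everything to the multiplicative representation \eqref{inverse maxmin star} and then exploit the monotonicity supplied by property \textbf{(G3)}. First I would write the diagonal in terms of the starred generators: since $\delta$ is the diagonal section of $C_{f,g}$, equation \eqref{inverse maxmin star} with $u=v=t$ gives
\[
    \delta(t)=C_{f,g}(t,t)=t^2\max\{0,1-f^*(t)g^*(t)\},
\]
so that for every $t\in(0,1]$
\[
    \delta^\#(t)=\frac{\delta(t)}{t^2}=\max\{0,1-f^*(t)g^*(t)\}.
\]
This already shows $\delta^\#$ takes values in $[0,1]$, since $f^*g^*\geqslant 0$.

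Next I would observe that both $f^*$ and $g^*$ are \emph{nonnegative} (because $f,g\geqslant 0$ and $t>0$) and \emph{nonincreasing} on $(0,1]$ by \textbf{(G3)} of Lemma \ref{properties G}. The key elementary step is that the product of two nonnegative nonincreasing functions is itself nonincreasing: for $0<s\leqslant t\leqslant 1$ we have $0\leqslant f^*(t)\leqslant f^*(s)$ and $0\leqslant g^*(t)\leqslant g^*(s)$, and multiplying these two inequalities between nonnegative numbers yields $f^*(t)g^*(t)\leqslant f^*(s)g^*(s)$. Hence $t\mapsto f^*(t)g^*(t)$ is nonincreasing, and consequently $t\mapsto 1-f^*(t)g^*(t)$ is nondecreasing on $(0,1]$.

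Finally, since $x\mapsto\max\{0,x\}$ is a nondecreasing function of $x$, composing it with the nondecreasing map $t\mapsto 1-f^*(t)g^*(t)$ preserves monotonicity, so $\delta^\#(t)=\max\{0,1-f^*(t)g^*(t)\}$ is nondecreasing, as claimed. There is no real obstacle here; the only point that genuinely needs care is the nonnegativity of $f^*$ and $g^*$, because the statement ``product of two nonincreasing functions is nonincreasing'' fails without it, and this is precisely where the structural assumption $f,g\geqslant 0$ (built into the definition of an RMM copula) is used.
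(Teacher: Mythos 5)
Your proof is correct and follows essentially the same route as the paper: both express the diagonal via $\delta^\#(t)=\max\{0,1-f^*(t)g^*(t)\}$ and invoke \textbf{(G3)} (with nonnegativity, i.e.\ \textbf{(G1)}) to conclude that $f^*g^*$ is nonincreasing, hence $1-f^*g^*$ and thus $\delta^\#$ is nondecreasing. You merely spell out two elementary steps (the product of nonnegative nonincreasing functions, and composition with $\max\{0,\cdot\}$) that the paper leaves implicit, which is fine.
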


  \prf
We have that
\[
    \delta(t)=t^2\max\{0,1-f^*(t)g^*(t)\}.
\]
By Lemma \ref{properties G} (property \textbf{(G3)}, and combined with \textbf{(G1)}) the functions $f^*(t)$ and $g^*(t)$ are nonincreasing and nonnegative, so that their product $f^*g^*$ is also nonincreasing. Hence, the function $1-f^*(t)g^*(t)$ is nondecreasing and proposition follows.
  \qed\\

We can say somewhat more about the diagonal sections in case of the symmetric RMM's.

\begin{proposition}\label{prop:delta2}
Suppose $\delta$ is a diagonal section of an SRMM copula $C_{f}$. Then $\widehat{\delta}(t) =t+ \sqrt{t^2-\delta(t)} :[0,1]\rightarrow[0,\infty)$ is nondecreasing.
\end{proposition}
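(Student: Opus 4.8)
The plan is to reduce $\widehat{\delta}$ to an explicit expression in the single generator $f$ and then recognize it as the minimum of two manifestly nondecreasing functions. First I would specialize the diagonal formula to the symmetric case: since $C_f(u,v)=\max\{0,uv-f(u)f(v)\}$, the diagonal is $\delta(t)=C_f(t,t)=\max\{0,t^2-f(t)^2\}$, and therefore
\[
    t^2-\delta(t)=t^2-\max\{0,t^2-f(t)^2\}=\min\{t^2,f(t)^2\}.
\]
Because $f$ is nonnegative by \textbf{(G1)}--\textbf{(G3)}, taking the square root introduces no sign ambiguity and gives $\sqrt{t^2-\delta(t)}=\min\{t,f(t)\}$.

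The key step is then the algebraic rewriting
\[
    \widehat{\delta}(t)=t+\min\{t,f(t)\}=\min\{2t,\,t+f(t)\}=\min\{2t,\,\widehat{f}(t)\},
\]
where $\widehat{f}(t)=f(t)+t$ is exactly the function appearing in \textbf{(G2)}. To be safe I would check the two regimes separately: when $f(t)\leqslant t$ one has $\delta(t)=t^2-f(t)^2$ and hence $\widehat{\delta}(t)=\widehat{f}(t)\leqslant 2t$, while when $f(t)>t$ one has $\delta(t)=0$ and hence $\widehat{\delta}(t)=2t<\widehat{f}(t)$; in both cases the $\min$ formula holds.

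To finish, I would invoke property \textbf{(G2)}, which asserts that $\widehat{f}$ is nondecreasing on $[0,1]$, together with the trivial fact that $t\mapsto 2t$ is nondecreasing. Since the pointwise minimum of two nondecreasing functions is nondecreasing (for $s\leqslant t$ the value $\min\{2s,\widehat{f}(s)\}$ is bounded above by both $2t$ and $\widehat{f}(t)$, hence by their minimum), the function $\widehat{\delta}=\min\{2t,\widehat{f}(t)\}$ is nondecreasing, as claimed.

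As for difficulty, there is no deep obstacle here: the entire content is bookkeeping around the truncation $\max\{0,\cdot\}$. The one place that genuinely needs care is the sign in $\sqrt{t^2-\delta(t)}=\min\{t,f(t)\}$ (rather than a $\pm$), which is precisely where the nonnegativity of $f$ guaranteed by \textbf{(G1)}--\textbf{(G3)} is used; getting the two branches of the truncation to glue into the single expression $\min\{2t,\widehat{f}(t)\}$ is the crux that makes the monotonicity transparent.
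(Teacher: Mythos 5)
Your proof is correct and follows essentially the same route as the paper: both reduce the claim to the identity $\widehat{\delta}(t)=t+\min\{t,f(t)\}=\min\{2t,\widehat{f}(t)\}$ and then apply property \textbf{(G2)}. The paper states the final monotonicity step as ``an easy observation'' on the piecewise form, which is exactly the min-of-two-nondecreasing-functions argument you spell out explicitly.
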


  \prf
Since $\delta(t)=\max\{0,t^2-f(t)^2\}$ we have $\widehat{\delta}(t) =t+ \min\{t,f(t)\}$, i.e.
\begin{equation}\label{delta}
    \widehat{\delta}(t) =\left\{
                           \begin{array}{ll}
                             2t, & \hbox{if $t\leqslant f(t)$;} \\
                             t+f(t), & \hbox{if $t\geqslant f(t)$.}
                           \end{array}
                         \right.
\end{equation}
By Lemma \ref{properties G} (property \textbf{(G2)}) it follows that the function $t+f(t)$ is nondecreasing. Then, $\widehat{\delta}$ is nondecreasing by an easy observation based on Equation \eqref{delta}.
  \qed\\

We denote by $\widehat{\mathcal{D}}$ the set of all diagonal sections that satisfy conditions of Propositions \ref{prop:delta1} and \ref{prop:delta2}, i.e.
\[
    \widehat{\mathcal{D}}=\{\delta\in\mathcal{D};\, \mbox{s.t.}\ \delta^\#, \widehat{\delta}\ \mbox{are nondecreasing} \}.
\]
Observe that the functions $\delta^\#$, and $\widehat{\delta}$ are related via Equations
\begin{equation}\label{delta hat}
    \widehat{\delta}(t)=t\left(1+\sqrt{1-\delta^\#(t)}\right)
\end{equation}
and
\begin{equation}\label{delta sharp}
    \delta^\#(t)=1-\left(\frac{\widehat{\delta}(t)}{t}-1\right)^2,
\end{equation}
where we used the standing assumption $\delta(t)\leqslant t^2$ for all $t\in[0,1]$ that holds for diagonal sections of reflected maxmin copulas. From relations \eqref{delta hat} and \eqref{delta sharp} it follows directly that $\delta^\#$ is nondecreasing if and only if $\displaystyle \frac{\widehat{\delta}(t)}{t}$ is nonincreasing on $(0,1]$. Using this fact we get an equivalent definition of $\widehat{\mathcal{D}}$
\[
    \widehat{\mathcal{D}}=\{\delta\in\mathcal{D};\, \mbox{s.t.}\ \widehat{\delta}(t)\ \mbox{is nondecreasing and}\ \frac{\widehat{\delta}(t)}{t}\ \mbox{is nonincreasing} \}.
\]

\begin{theorem}\label{thm:main}
  A function $\delta$ is a diagonal section of an SRMM copula if and only if $\delta\in \widehat{\mathcal{D}}$. Moreover, if $f(t)=\sqrt{t^2-\delta(t)}$ for $t\in[0,1]$, where $\delta\in \widehat{\mathcal{D}}$, then $\delta$ is the diagonal section of the SRMM copula $C_f$.
\end{theorem}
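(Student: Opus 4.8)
The plan is to prove both implications, treating the \emph{necessity} direction first since it is essentially already in place. If $\delta$ is the diagonal section of an SRMM copula $C_f$, then $\delta$ is the diagonal of a genuine copula, hence $\delta\in\mathcal{D}$ by the characterization of diagonal functions from \cite{Nels}; moreover $\delta(t)=\max\{0,t^2-f(t)^2\}\leqslant t^2$, the bound recorded right after \eqref{inverse maxmin star}. Proposition \ref{prop:delta1} then yields that $\delta^\#$ is nondecreasing and Proposition \ref{prop:delta2} that $\widehat{\delta}$ is nondecreasing, so $\delta\in\widehat{\mathcal{D}}$ by definition. Nothing further is needed for this half.

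For the \emph{sufficiency} direction I would take $\delta\in\widehat{\mathcal{D}}$ and set $f(t)=\sqrt{t^2-\delta(t)}$, which is well-defined and nonnegative because the standing assumption $\delta(t)\leqslant t^2$ is built into membership in $\widehat{\mathcal{D}}$ (it is exactly what makes $\delta^\#$ and $\widehat{\delta}$ meaningful). The crux is to check that this $f$ satisfies \textbf{(G1)}--\textbf{(G3)}, after which Theorem \ref{copula} guarantees that $C_f$ is an SRMM copula. I would verify the three properties by translating them into the two auxiliary functions already attached to $\delta$. Property \textbf{(G1)} is read off boundary values: $f(0)=\sqrt{0^2-\delta(0)}=0$ and $f(1)=\sqrt{1-\delta(1)}=0$ by \textbf{(D1)}, whence also $f^*(1)=0$. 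For \textbf{(G2)} I would invoke the identity
\[
    \widehat{f}(t)=t+f(t)=t+\sqrt{t^2-\delta(t)}=\widehat{\delta}(t),
\]
so $\widehat{f}$ is nondecreasing precisely because $\widehat{\delta}$ is. For \textbf{(G3)} I would use, for $t\in(0,1]$,
\[
    f^*(t)=\frac{f(t)}{t}=\sqrt{1-\frac{\delta(t)}{t^2}}=\sqrt{1-\delta^\#(t)},
\]
so that $f^*$ is nonincreasing because $\delta^\#$ is nondecreasing; equivalently one may phrase this through the monotonicity of $\widehat{\delta}(t)/t=1+f^*(t)$ noted just before the statement.

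Finally I would compute the diagonal of the resulting copula to confirm it returns $\delta$: since $f(t)^2=t^2-\delta(t)$ and $\delta(t)\geqslant0$ (by \textbf{(D1)} and \textbf{(D3)}),
\[
    C_f(t,t)=\max\{0,t^2-f(t)^2\}=\max\{0,\delta(t)\}=\delta(t),
\]
which closes the equivalence and simultaneously establishes the ``moreover'' clause.

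I do not anticipate a genuine obstacle: the whole argument rests on the two algebraic identities $\widehat{f}=\widehat{\delta}$ and $f^*=\sqrt{1-\delta^\#}$, which convert \textbf{(G2)} and \textbf{(G3)} directly into the defining monotonicities of $\widehat{\mathcal{D}}$. The only points demanding care are bookkeeping ones — confirming $\delta(t)\leqslant t^2$ so that $f$ is real, observing that the boundary value $f^*(0)$ is irrelevant since \textbf{(G3)} concerns only $(0,1]$ (so the convention \eqref{eq:new} never intervenes), and using $\delta\geqslant0$ to drop the outer $\max$ in the diagonal computation. Continuity of $f$, should one want it, is free from the $2$-Lipschitz property \textbf{(D4)} of $\delta$, but it is not separately required because Theorem \ref{copula} needs only \textbf{(G1)}--\textbf{(G3)}.
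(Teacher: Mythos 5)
Your proof is correct and takes essentially the same route as the paper: necessity via Propositions \ref{prop:delta1} and \ref{prop:delta2}, and sufficiency by defining $f(t)=\sqrt{t^2-\delta(t)}$ and verifying \textbf{(G1)}--\textbf{(G3)} through the identities $\widehat{f}=\widehat{\delta}$ and $f^*=\sqrt{1-\delta^\#}$, then invoking Theorem \ref{copula}. Your explicit check that $C_f(t,t)=\max\{0,\delta(t)\}=\delta(t)$ and the remarks on well-definedness of $f$ are details the paper leaves implicit, but the argument is the same.
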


  \prf
Suppose first that $\delta(t)$ is the diagonal section of the SRMM copula $C_f$. Then $\delta\in \widehat{\mathcal{D}}$ by Propositions \ref{prop:delta1} and \ref{prop:delta2}.

Conversely, if $\delta\in \widehat{\mathcal{D}}$ then $\delta^\#$, and $\widehat{\delta}$ are nondecreasing. Define $f(t)= \sqrt{t^2-\delta(t)}, f:[0,1]\rightarrow[0,1]$ and observe first that it is well-defined. Indeed, $t^2-\delta(t)\geqslant0$, $\displaystyle \delta^\#(t)=\frac{\delta(t)}{t^2}$ is nondecreasing on $(0,1]$, and $\delta^\#(1)=1$. Next observe that
\[
    \widehat{f}(t)=t+f(t)=t+\sqrt{t^2-\delta(t)}=\widehat{\delta}(t),
\]
and
\[
    f^*(t)=\frac{f(t)}{t}=\sqrt{1-\delta^\#(t)}.
\]
Since $\delta^\#$ and $\widehat{\delta}$ are nondecreasing it follows easily that $\widehat{f}$ is nondecreasing and $f^*(t)$ is nonincreasing. Recall that $f(0)=f(1)=f^*(1)=0$ to conclude that all the properties \textbf{(G1)}--\textbf{(G3)} hold and $f$ is a generator of an SRMM copula with the diagonal $\delta$.
  \qed\\

\begin{example}
An SRMM copula is not uniquely determined by its diagonal section in general.
\end{example}

  \prf
The diagonal of the Fr\'{e}chet-Hoeffding lower bound $W$ equals
\[
    \delta_W(t)=\max\{0,2t-1\}.
\]
It belongs to $\widehat{\mathcal{D}}$. We already know that $W=C_g$ for
\[
    g(t)=\left\{
           \begin{array}{ll}
             0, & \hbox{if $t=0$;} \\
             1-t, & \hbox{if $0<t\leqslant1$.}
           \end{array}
         \right.
\]
Observe also that $g$ is not equal to the function $f$ corresponding to $\delta_W$ by Theorem \ref{thm:main}. This is equal to $f(t)=\min\{t,1-t\}$ and the corresponding copula is equal to
\[{\renewcommand{\arraystretch}{1.8}
    C_f(u,v)=\left\{
               \begin{array}{ll}
                 0, & \hbox{$\displaystyle 0\leqslant u,v\leqslant\frac{1}{2}$;} \\
                 u(2v-1), & \hbox{$\displaystyle0\leqslant u\leqslant\frac{1}{2}\leqslant v\leqslant1$;} \\
                 v(2u-1), & \hbox{$\displaystyle0\leqslant v\leqslant\frac{1}{2}\leqslant u\leqslant1$;} \\
                 u+v-1, & \hbox{$\displaystyle\frac{1}{2}\leqslant u,v\leqslant1$.}
               \end{array}
             \right.
}\]
Actually, for any function $h:[0,1]\rightarrow[0,1]$ such that $f(t)\leqslant h(t)\leqslant g(t)$ for all $t\in[0,1]$ and such that $\widehat{h}$ and $h^*$ are nondecreasing and nonincreasing, respectively, we have that the diagonal of $C_h$ is equal to $\delta_W$. Observe that the copula $C_f$ is equal to the reflected ordinal sum of two copies of $\Pi$ with respect to the partition $\displaystyle\{0,\frac12,1\}$ (see \cite[Section 3.8]{DuSe}).
  \qed\\

\begin{corollary}
If $\delta\in \widehat{\mathcal{D}}$ is such that $\delta(t)>0$ for all $t\in(0,1]$ then $C_f$ for $f(t)=\sqrt{t^2-\delta(t)}$ is the unique SRMM copula with the diagonal section equal to $\delta$.
\end{corollary}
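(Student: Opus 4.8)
The plan is to separate the statement into existence, which is already in hand, and uniqueness, which is the actual content. Existence is immediate from Theorem \ref{thm:main}: since $\delta\in\widehat{\mathcal{D}}$, the function $f(t)=\sqrt{t^2-\delta(t)}$ is a legitimate generator satisfying \textbf{(G1)}--\textbf{(G3)}, and the SRMM copula $C_f$ has diagonal section $\delta$. So it remains to show that no other SRMM copula can have $\delta$ as its diagonal.

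For uniqueness I would start from an arbitrary SRMM copula $C_h$ whose diagonal section equals $\delta$, with $h$ a generator satisfying \textbf{(G1)}--\textbf{(G3)}. The diagonal identity for SRMM copulas reads
\[
    \delta(t)=C_h(t,t)=\max\{0,\,t^2-h(t)^2\}.
\]
The decisive step is to exploit the hypothesis $\delta(t)>0$ on $(0,1]$: for such $t$ the maximum cannot be attained by the $0$ branch, hence $t^2-h(t)^2=\delta(t)>0$, which forces $h(t)=\sqrt{t^2-\delta(t)}=f(t)$, both quantities being nonnegative. At $t=0$ property \textbf{(G1)} gives $h(0)=0=f(0)$. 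Thus $h\equiv f$ on $[0,1]$, and since the copula is a function of its generator, $C_h=C_f$. This establishes uniqueness.

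The proof has no serious computational obstacle; its only subtle point --- and the reason the corollary needs the positivity hypothesis --- is the removal of the ambiguity introduced by the truncation $\max\{0,\cdot\}$ in the definition of an SRMM copula. Whenever $\delta(t)=0$ on a subinterval of $(0,1]$, the relation $\max\{0,t^2-h(t)^2\}=0$ only tells us that $h(t)\geqslant t$ there, leaving a whole band of admissible generators, as witnessed by the preceding Example with $\delta_W$. The hypothesis $\delta(t)>0$ for all $t\in(0,1]$ is exactly what pins $h(t)^2=t^2-\delta(t)$ down pointwise and thereby singles out $f$.
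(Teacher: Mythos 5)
Your proposal is correct and follows essentially the same route as the paper's own proof: existence via Theorem \ref{thm:main}, then uniqueness by observing that $\delta(t)>0$ forces the non-truncated branch in $\delta(t)=\max\{0,t^2-h(t)^2\}$, pinning down $h(t)=\sqrt{t^2-\delta(t)}=f(t)$ for all $t$. Your added remark on why positivity is indispensable (the $\delta_W$ example) is a nice observation but not a departure from the paper's argument.
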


  \prf
By Theorem \ref{thm:main} we know that $f$ is such that the diagonal section of $C_f$ is equal to $\delta$. Suppose that $C_g(u,v)=\max\{0, uv-g(u)g(v)\}$ is an SRMM copula such that its diagonal is equal to $\delta$. Then $\delta(t)=\max\{0,t^2-g(t)^2\}$ and since $\delta(t)>0$, we have that $t^2-g(t)^2>0$ for all $t>0$. But then $g(t)=\sqrt{t^2-\delta(t)}=f(t)$ for all $t\in[0,1]$.
  \qed\\

\begin{example}
There exists a diagonal section $\delta$ of a copula satisfying $\delta(t)\leqslant t^2$ for all $t$ such that $\delta^\#$ is nondecreasing and $\widehat{\delta}$ is not.
\end{example}

  \prf
Let $\delta$ be given by
\[{\renewcommand{\arraystretch}{2.2}
    \delta(t)=\left\{
                \begin{array}{ll}
                  0, & \hbox{$\displaystyle t\in\left[0,\frac{1}{4}\right]$;} \\
                  \displaystyle 2t-\frac{1}{2}, & \hbox{$\displaystyle t\in\left[\frac{1}{4},1-\frac{\sqrt{2}}{2}\right]$;} \\
                  t^2, & \hbox{$\displaystyle t\in\left[1-\frac{\sqrt{2}}{2},1\right]$.}
                \end{array}
              \right.
}\]
It is easy to verify that $\delta$ is a continuous function that satisfies conditions \textbf{(D1)}--\textbf{(D4)} together with $\delta(t)\leqslant t^2$ for all $t\in[0,1]$. Then
\[{\renewcommand{\arraystretch}{2.2}
    \delta^\#(t)=\left\{
                \begin{array}{ll}
                  0, & \hbox{$\displaystyle t\in\left[0,\frac{1}{4}\right]$;} \\
                  \displaystyle \frac{4t-1}{2t^2}, & \hbox{$\displaystyle t\in\left[\left(\frac{1}{4}\right)^+,1-\frac{\sqrt{2}}{2}\right]$;} \\
                  1, & \hbox{$\displaystyle t\in\left[1-\frac{\sqrt{2}}{2},1\right]$.}
                \end{array}
              \right.
}\]
is nondecreasing while
\[{\renewcommand{\arraystretch}{2.2}
    \widehat{\delta}(t)=\left\{
                \begin{array}{ll}
                  2t, & \hbox{$\displaystyle t\in\left[0,\frac{1}{4}\right]$;} \\
                  \displaystyle t+\sqrt{t^2-2t+\frac{1}{2}}, & \hbox{$\displaystyle t\in\left[\frac{1}{4},1-\frac{\sqrt{2}}{2}\right]$;} \\
                  t, & \hbox{$\displaystyle t\in\left[1-\frac{\sqrt{2}}{2},1\right]$.}
                \end{array}
              \right.
}\]
is not.
  \qed\\

For any diagonal section $\delta$ denote by $a_\delta$ the largest $t\in[0,1]$ such that $\delta(a_\delta)=0$. This means that $\delta(t)=0$ for $t\in[0,a_\delta]$ and $\delta(t)>0$ for $t\in(a_\delta,1]$. Since $W$ is the lower bound for all copulas it follows that $a_\delta\in\left[0,\frac{1}{2}\right]$. In the following theorem we give the lower and the upper bound for SRMM copulas with a given diagonal section $\delta$.

\begin{theorem}\label{thm:bounds}
Suppose that $\delta\in \widehat{\mathcal{D}}$ is such that $a_\delta>0$ and write
\[{\renewcommand{\arraystretch}{1.2}
    h(t)=\left\{
           \begin{array}{ll}
             0, & \hbox{$t=0$;}\\
             2a_\delta-t, & \hbox{$t\in(0,a_\delta]$;} \\
             \sqrt{t^2-\delta(t)}, & \hbox{$t\in(a_\delta,1]$;}
           \end{array}
         \right.\ \ \mbox{and}\ \ \ f(t)=\sqrt{t^2-\delta(t)}.
}\]
Then $C_h$ and $C_f$ are SRMM copulas with diagonal section $\delta$, and if $C_g$ is any SRMM copula with diagonal section $\delta$, then
\[
    C_h(u,v)\leqslant C_g(u,v)\leqslant C_f(u,v)\ \ \mbox{for all}\ \ u,v\in[0,1].
\]
\end{theorem}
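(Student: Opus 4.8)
The plan is to prove the two assertions in turn: first that $C_f$ and $C_h$ are genuine SRMM copulas with diagonal section $\delta$, and then that they sandwich every SRMM copula sharing this diagonal. For $f$ this is free: $\delta\in\widehat{\mathcal{D}}$, so Theorem \ref{thm:main} already gives that $f(t)=\sqrt{t^2-\delta(t)}$ is a generator and that $C_f$ has diagonal $\delta$. For $h$ I would verify \textbf{(G1)}--\textbf{(G3)} by hand, with all the care concentrated at the junction $t=a_\delta$. On $(0,a_\delta]$ one computes $\widehat{h}(t)=h(t)+t=2a_\delta$ (constant, hence nondecreasing) and $h^*(t)=2a_\delta/t-1$ (decreasing); on $(a_\delta,1]$ we have $h=f$, so $\widehat{h}=\widehat{f}$ and $h^*=f^*$ inherit the monotonicity of the generator $f$. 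Because $\delta(a_\delta)=0$ gives $f(a_\delta)=\sqrt{a_\delta^2-\delta(a_\delta)}=a_\delta$, both $\widehat{h}$ and $h^*$ match continuously across $a_\delta$ (common values $2a_\delta$ and $1$), and the only discontinuity is the upward jump of $\widehat{h}$ at $0$, which is harmless. Finally $C_h$ has diagonal $\delta$ since $h=f$ on $(a_\delta,1]$, while on $[0,a_\delta]$ one has $h(t)=2a_\delta-t\geqslant t$, forcing $\max\{0,t^2-h(t)^2\}=0=\delta(t)$.

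The second assertion I would reduce to a pointwise statement about generators. Every SRMM copula has the form $\max\{0,uv-g(u)g(v)\}$ with $g\geqslant0$, so the pointwise inequalities $f(t)\leqslant g(t)\leqslant h(t)$ yield $f(u)f(v)\leqslant g(u)g(v)\leqslant h(u)h(v)$, and by monotonicity of $t\mapsto\max\{0,uv-t\}$ this gives $C_h\leqslant C_g\leqslant C_f$. Hence the entire problem reduces to establishing $f\leqslant g\leqslant h$ for an arbitrary generator $g$ of an SRMM copula with diagonal $\delta$.

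For the lower generator bound $f\leqslant g$ I would read off two regimes from the diagonal identity $\delta(t)=\max\{0,t^2-g(t)^2\}$. On $(a_\delta,1]$ we have $\delta(t)>0$, which forces $g(t)^2=t^2-\delta(t)$, i.e.\ $g(t)=\sqrt{t^2-\delta(t)}=f(t)$. On $[0,a_\delta]$ we have $\delta(t)=0$, hence $t^2-g(t)^2\leqslant0$, i.e.\ $g(t)\geqslant t=f(t)$ (recall $f(t)=t$ there, since $\delta$ vanishes). Thus $f\leqslant g$ everywhere, so $C_g\leqslant C_f$.

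For the upper generator bound $g\leqslant h$ the two regimes again split the argument, and this is the step I expect to be the most delicate. On $(a_\delta,1]$ equality $g=f=h$ already holds. Since generators are continuous on $(0,1]$ and $a_\delta\in(0,\tfrac12]\subset(0,1)$, continuity of $g$ together with $g=f$ immediately to the right of $a_\delta$ pins down $g(a_\delta)=f(a_\delta)=a_\delta$. Property \textbf{(G2)} then finishes it: $\widehat{g}(t)=g(t)+t$ is nondecreasing, so for $t\in(0,a_\delta]$ we get $g(t)+t=\widehat{g}(t)\leqslant\widehat{g}(a_\delta)=2a_\delta$, that is $g(t)\leqslant 2a_\delta-t=h(t)$; combined with $g(0)=0=h(0)$ this gives $g\leqslant h$ everywhere and hence $C_h\leqslant C_g$. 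The main obstacle is therefore not any computation but the bookkeeping at the single junction $a_\delta$: one must use precisely that $\delta$ vanishes on $[0,a_\delta]$ and is positive beyond, that $f(a_\delta)=a_\delta$, and the continuity of generators, in order to establish $g(a_\delta)=a_\delta$ before invoking the monotonicity of $\widehat{g}$.
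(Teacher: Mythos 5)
Your proof is correct and takes essentially the same route as the paper's: $C_f$ is handled via Theorem \ref{thm:main}, $h$ is checked piecewise against \textbf{(G1)}--\textbf{(G3)} with matching values at the junction $a_\delta$, and the sandwich is reduced to the generator inequalities $f\leqslant g\leqslant h$, obtained from the diagonal identity on the two regimes and from the monotonicity of $\widehat{g}$. Your explicit verification that the diagonal of $C_h$ equals $\delta$ and your use of continuity of $g$ on $(0,1]$ to pin down $g(a_\delta)=a_\delta$ merely make precise two steps the paper leaves implicit.
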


  \prf
The fact that $C_f$ is an SRMM copula follows by Theorem \ref{thm:main}. To see that the same is true for $C_h$ we have to modify slightly the proof of that theorem. We first observe that $h(0)=h(1)=h^*(1)=0$ and then introduce
\[{\renewcommand{\arraystretch}{1.2}
    \widehat{h}(t)= t+h(t)=\left\{
           \begin{array}{ll}
             t, & \hbox{$t=0$;}\\
             2a_\delta, & \hbox{$t\in(0,t_\delta]$;} \\
             t+\sqrt{t^2-\delta(t)}, & \hbox{$t\in(t_\delta,1]$.}
           \end{array}
         \right.
}\]
As in the proof of Theorem \ref{thm:main} we see that the function $\widehat{h}(t)$ is nondecreasing on $[a_\delta,1]$ and clearly it is also nondecreasing on $[0,a_\delta]$. Since $\delta(a_\delta)=0$, and so $a_\delta+ \sqrt{a_\delta^2-\delta(a_\delta)}=2a_\delta$, it is nondecreasing on $[0,1]$.

Next, we consider the function
\[{\renewcommand{\arraystretch}{1.5}
    h^*(t)= \left\{
           \begin{array}{ll}
             \displaystyle 2\frac{a_\delta}{t}-1, & \hbox{$t\in(0,t_\delta]$;} \\
             \sqrt{1-\delta^\#(t)}, & \hbox{$t\in(t_\delta,1]$}
           \end{array}
         \right.
}\]
for $t\in(0,1]$. This function is clearly nonincreasing on $(0,a_\delta]$ and we can see, similarly to the proof of Theorem \ref{thm:main}, that it is nonincreasing on $(a_\delta,1]$. Since $\displaystyle \sqrt{1-\delta^\#(a_\delta)} =1=2\frac{a_\delta}{t}-1$ it is nonincreasing on $[0,1]$. So, $h$ satisfies conditions \textbf{(G1)}--\textbf{(G3)} and it is a generator of an SRMM copula.

Now, suppose $C_g$ is an SRMM copula with diagonal section $\delta$. So, $C_g(t,t)=\max\{0,t^2- g(t)^2\} =\delta(t)$. On $(0,a_\delta]$ we have $\delta(t)=0$ so that
\begin{equation}\label{eq:bounds one}
    g(t)\geqslant t\ \ \mbox{on}\ \ [0,a_\delta].
\end{equation}
For $t\in[a_\delta,1]$ we have $g(t)=\sqrt{t^2-\delta(t)}$. The fact that $\widehat{g}(t)$ is nondecreasing on $[0,1]$ implies $t+g(t)\leqslant a_\delta+g(a_\delta)=2a_\delta$ on $[0,a_\delta]$ so that
\begin{equation}\label{eq:bounds two}
    g(t)\leqslant2a_\delta-t\ \ \mbox{on}\ \ [0,a_\delta].
\end{equation}
Combining \eqref{eq:bounds one} and \eqref{eq:bounds two} we obtain
\[
    f(t)\leqslant g(t)\leqslant h(t)\ \ \mbox{on}\ \ [0,a_\delta]\ \ \mbox{and}\ \
    f(t) =   g(t) =   h(t)\ \ \mbox{on}\ \ [a_\delta,1].
\]
The desired inequality then follows easily.
  \qed\\

This theorem implies, in particular, that the SRMM copula is uniquely determined by the diagonal section $\delta$ for $u,v\in[a_\delta,1]$.

We will now show by a counterexample that the statement of Proposition \ref{prop:delta2} fails if SRMM coupula is replaced by a general RMM copula.

\begin{example}
There exists an RMM copula such that for its diagonal $\delta(t)$ function $\widehat{\delta}(t)=t+ \sqrt{t^2-\delta(t)}$ is not nondecreasing.
\end{example}

 \prf
It is not hard to see that functions
\[{\renewcommand{\arraystretch}{1.2}
    f(t)=\left\{
           \begin{array}{ll}
             t, & \hbox{$\displaystyle 0\leqslant t\leqslant \frac{1}{2}$;} \\
             1-t, & \hbox{$\displaystyle \frac{1}{2}\leqslant t\leqslant 1$;}
           \end{array}
         \right.\ \ \
         \hbox{and}\ \ \ g(t)=\left\{
                                \begin{array}{ll}
                                  \displaystyle\frac{1}{2}-t, & \hbox{$\displaystyle 0\leqslant t\leqslant \frac{1}{2}$;} \\
                                  0, & \hbox{$\displaystyle \frac{1}{2}\leqslant t\leqslant 1$;}
                                \end{array}
                              \right.
}\]
satisfy conditions \textbf{(G1)}--\textbf{(G3)}. Thus, they are generating functions of the RMM copula $C_{f,g}(u,v)$. The diagonal section of this copula equals
\[{\renewcommand{\arraystretch}{2}
    \delta(t)=C_{f,g}(t,t)=\max\{0,t^2-f(t)g(t)\}=\left\{
                                                    \begin{array}{ll}
                                                      0, & \hbox{$\displaystyle 0\leqslant t\leqslant \frac{1}{4}$;} \\
                                                      \displaystyle 2t^2-\frac{1}{2}t, & \hbox{$\displaystyle \frac{1}{4}\leqslant t\leqslant \frac{1}{2}$;} \\
                                                      t^2, & \hbox{$\displaystyle \frac{1}{2}\leqslant t\leqslant 1$.}
                                                    \end{array}
                                                  \right.
}\]
Then $\widehat{\delta}(t)$ is equal to
\[
    {\renewcommand{\arraystretch}{1.8}
    \widehat{\delta}(t)=\left\{
                          \begin{array}{ll}
                            2t, & \hbox{$\displaystyle 0\leqslant t\leqslant \frac{1}{4}$;} \\
                            t+\sqrt{\frac{1}{2}t-t^2}, & \hbox{$\displaystyle \frac{1}{4}\leqslant t\leqslant \frac{1}{2}$;} \\
                            t, & \hbox{$\displaystyle \frac{1}{2}\leqslant t\leqslant 1$.}
                          \end{array}
                        \right.}
\]
Since
\[
    \widehat{\delta}\left(\frac{3}{8}\right)=\frac{3+\sqrt{3}}{8}>\frac{4}{8}=\widehat{\delta}\left(\frac{1}{2}\right)
\]
it follows that $\widehat{\delta}$ is not nondecreasing.
 \qed\\[5mm]

\textbf{Acknowledgements.}
The authors are thankful to Fabrizio Durante for some useful comments to a previous version of this paper. We are also grateful to an anonymous referee for helping us improve some parts of the paper especially in Section 3. We want to thank to our colleague Bla\v{z} Moj\v{s}kerc who helped us with the figures.


\begin{thebibliography}{00}

\bibitem{BeBoCiSaPlSa} L.\ B{\v{e}}hounek, U.\ Bodenhofer, P.\ Cintula, S.\ Saminger-Platz, P.\ Sarkoci, \textsl{Graded dominance and related graded properties of fuzzy connectives}, Fuzzy Sets and Systems, \textbf{262} (2015), 78--101.

\bibitem{ChDuMu} U.\ Cherubini, F.\ Durante, S. Mulinacci, (eds.), \textsl{Marshall--{O}lkin {D}istributions -- {A}dvances in {T}heory and {A}pplications}, Springer Proceedings in Mathematics \& Statistics, Springer International Publishing, 2015.

\bibitem{DuFeSaUbFl} F.~Durante, J.\ Fern\'andez S\'anchez, M.\ \'Ubeda Flores, \textsl{Bivariate copulas generated by perturbarion}, Fuzzy Sets and Systems, \textbf{228} (2013), 137--144.

\bibitem{DuGiMa} F.~Durante, S.\ Girard, G.\ Mazo, \textsl{{M}arshall--{O}lkin type copulas generated by a global shock}, J.\ Comput.\ Appl.\ Math., \textbf{296} (2016), 638--648.

\bibitem{DuJa} F.\ Durante, P.\ Jaworski, \textsl{A new characterization of bivariate copulas}, Comm.\ in Stats.\ Theory and Methods, \textbf{39} (2010), 2901--2912.

\bibitem{DuKoMeSe} F.~Durante, A.~Kolesarov\'{a}, R.~Mesiar, C.~Sempi, \textsl{Semilinear copulas}, Fuzzy Sets and Systems, \textbf{159} (2008), 63--76.

\bibitem{DuMePaSe} F.~Durante, R.\ Mesiar, P.\ L.\ Papini, C.~Sempi, \textsl{2-Increasing binary aggregation operators}, Information Sciences, \textbf{177} (2007), 111--129.

\bibitem{DuOmOrRu} F.~Durante, M.\ Omladi\v{c}, L.\ Ora\v{z}em, N.\ Ru\v{z}i\'{c}, \textsl{Shock models with dependence and asymmetric linkages}, Fuzzy Sets and Systems, \textbf{323} (2017), 152--168.

\bibitem{DuSe} F.~Durante, C.~Sempi, Principles of Copula Theory, CRC/Chapman \& Hall, Boca Raton (2015).

\bibitem{GeNe} C.\ Genest, J.\ Ne\v{s}lehov\'a, \textsl{Assessing and Modeling Asymmetry in Bivariate Continuous Data}, In: P.\ Jaworski, F.\ Durante, W.K.\ H\"ardle, (eds.), {C}opulae in {M}athematical and {Q}uantitative {F}inance, Lecture Notes in Statistics, Springer Berlin Heidelberg, (2013), 152--16891--114.

\bibitem{Joe} H.\ Joe, Dependence Modeling with Copulas, Chapman \& Hall/CRC, London (2014).

\bibitem{JwBaDeMe14}  T.\ Jwaid, B.\ De Baets, H. De Meyer, \textsl{Ortholinear and paralinear semi-copulas}, Fuzzy Sets and Systems, \textbf{252} (2014), 76--98.

\bibitem{JwBaDeMe15}  T.\ Jwaid, B.\ De Baets, H. De Meyer, \textsl{Semiquadratic copulas based on horizontal and vertical interpolation}, Fuzzy Sets and Systems, \textbf{264} (2015), 3--21.

\bibitem{KlLiMePa}  E.\ P.\ Klement, J. Li, R. Mesiar, E. Pap, \textsl{Integrals based on monotone set functions}, Fuzzy Sets and Systems, \textbf{281} (2015), 3--21.

\bibitem{KlMeSpSt}  E.\ P.\ Klement, R.\ Mesiar, F.\ Spizzichino, A.\ Stup{\v{n}}anov{\'a}, \textsl{Universal integrals based on copulas}, Fuzzy Optim.\ Decis.\ Mak., \textbf{13}, No.\ 3, (2014), 273--286.

\bibitem{Loja} S.\  \L{}ojasiewicz, An introduction to the theory of real functions, John Wiley (1988).

\bibitem{Mars} A.~W.~Marshall, \textsl{Copulas, marginals, and joint distributions}, in: L.~R\"{u}schendorf, B.~Schweitzer, M.~D.~Taylor  (eds.), Distributions with Fixed Marginals and Related Topics in LMS, Lecture Notes -- Monograph Series, vol.~ \textbf{28}, 1996, 213--222.

\bibitem{MaOl} A.~W.~Marshall, I.~Olkin, \textsl{A multivariate exzponential distributions}, J.~Amer.~Stat.~Assoc., \textbf{62}, (1967), 30--44.

\bibitem{MeKoKo}  R.\ Mesiar, M.\ Komorn\'{i}kov\'{a}, J.\ Komorn\'{i}k, \textsl{Perturbation of bivariate copulas}, Fuzzy Sets and Systems, \textbf{268} (2015), 127--140.

\bibitem{Nels} R.\ B.\ Nelsen, An introduction to copulas, 2nd edition, Springer-Verlag, New York (2006).

\bibitem{OmRu}  M.\ Omladi\v{c}, N.\ Ru\v{z}i\'{c}, \textsl{Shock models with recovery option via the maxmin copulas}, Fuzzy Sets and Systems, \textbf{284} (2016), 113--128.

\bibitem{RoLaUbFl}  J.\ A.\ Rodr\'iguez-Lallena, M.\ \'Ubeda-Flores, \textsl{A new class of bivariate copulas}, Stat.\ \& Probab.\  Lett., \textbf{66} (2004), 315--325.




\end{thebibliography}
\end{document}